\newtheorem{theorem}{Theorem}[section]
\newtheorem{corollary}[theorem]{Corollary}
\newtheorem{lemma}[theorem]{Lemma}
\newtheorem{proposition}[theorem]{Proposition}
\theoremstyle{definition}
\newtheorem{definition}[theorem]{Definition}
\newtheorem{remark}[theorem]{Remark}
\title[Lattice Structures for Attractors  II]{Lattice Structures for Attractors  II\footnote{\today}}
\author[W.D. Kalies, K. Mischaikow, and R.C.A.M. Vandervorst]{}
\subjclass{Primary: 37B25, 06D05; Secondary: 37B35.}
 \keywords{Attractor, attracting neighborhood, invariant set, distributive lattice, Birkhoff's Representation Theorem.}
 \email{mischaik@math.rutgers.edu}
 \email{wkalies@fau.edu}
 \email{vdvorst@few.vu.nl}
\thanks{The first author is partially supported by NSF grant NFS-DMS-0914995, the second author is 
partially supported by NSF grants NSF-DMS-0835621, 0915019, 1125174, 1248071, and contracts from AFOSR and DARPA.
The present work is part of the third authors activities within CAST, a Research Network Program of the European Science Foundation ESF}
\newcommand{\rc}{{\scriptscriptstyle\#}}
\newenvironment{fig}
{\begin{figure}[hbt]}
{\end{figure}}
\newcommand{\bfig}{\begin{fig}}
\newcommand{\efig}{\end{fig}}
\newcommand{\setof}[1]{\left\{ {#1}\right\}}
\newcommand{\mvmap}{\raisebox{-0.2ex}{$\,\overrightarrow{\to}\,$}}
\newcommand{\N}{{\mathbb{N}}}
\newcommand{\R}{{\mathbb{R}}}
\newcommand{\T}{{\mathbb{T}}}
\newcommand{\Z}{{\mathbb{Z}}}
\newcommand{\cA}{{\mathcal A}}
\newcommand{\cF}{{\mathcal F}}
\newcommand{\cG}{{\mathcal X}} 
\newcommand{\cN}{{\mathcal N}}
\newcommand{\cR}{{\mathcal R}}
\newcommand{\cS}{{\mathcal S}}
\newcommand{\cU}{{\mathcal U}}
\newcommand{\cV}{{\mathcal V}}
\newcommand{\cW}{{\mathcal W}}
\newcommand{\cX}{{\mathcal X}}
\newcommand{\sA}{{\mathsf A}}
\newcommand{\sB}{{\mathsf B}}
\newcommand{\sH}{{\mathsf H}}
\newcommand{\sJ}{{\mathsf J}}
\newcommand{\sK}{{\mathsf K}}
\newcommand{\sL}{{\mathsf L}}
\newcommand{\sO}{{\mathsf O}}
\newcommand{\sP}{{\mathsf P}}
\newcommand{\sR}{{\mathsf R}}
\newcommand{\scrR}{{\mathscr R}}
\newcommand{\cFm}{{\mathcal F}_{{\rm o}}}
\newcommand{\cFmn}{{\mathcal F}_{{\rm o},n}}
\newcommand{\cFme}{{\mathcal F}_{{\rho}}}
\newcommand{\cFmen}{{\mathcal F}_{{\rho}_n}}
\newcommand{\sAtt}{{\mathsf{ Att}}}
\newcommand{\sRep}{{\mathsf{ Rep}}}
\newcommand{\sInvset}{{\mathsf{ Invset}}}
\newcommand{\IS}{{\mathsf{ Invset}}}
\newcommand{\sGrid}{{\mathsf{Grid}}}
\newcommand{\sANbhd}{{\mathsf{ ANbhd}}}
\newcommand{\sRNbhd}{{\mathsf{ RNbhd}}}
\newcommand{\sANbhdR}{{\mathsf{ ANbhd}}_{\scrR}}
\newcommand{\sRNbhdR}{{\mathsf{ RNbhd}}_{\scrR}}
\newcommand{\subFR}{{{\rm sub}_F\mathscr{R}}}
\newcommand{\sSet}{{\mathsf{Set}}}
\newcommand{\sASet}{{\mathsf{ASet}}}
\newcommand{\sRSet}{{\mathsf{RSet}}}
\newcommand{\bomega}{{\bm{\omega}}}
\newcommand{\balpha}{{\bm{\alpha}}}
\newcommand{\Chi}{\raise .75ex\hbox{$\chi$}}
\newcommand{\pred}[1]{\overleftarrow{#1}}
\newcommand{\down}{\downarrow\!}
\newcommand{\vgln}{\lb\begin{array}{rcl}}
\newcommand{\eindvgln}{\end{array}\right.}
\newcommand{\dist}{{\rm dist}\,}
\newcommand{\cl}{{\rm cl}\,}
\newcommand{\diam}{{\rm diam}\,}
\newcommand{\Int}{\mbox{\rm int\,}}
\newcommand{\Inv}{\mbox{\rm Inv\,}}
\newcommand{\supp}[1]{\left| {#1}\right|}
\newcommand{\id}{\mathop{\rm id }\nolimits}
\newcommand{\cov}{\mathop{\rm cov}}
\begin{document}
\begin{sloppypar}
\maketitle

\centerline{\scshape William D. Kalies }
\medskip
{\footnotesize
 \centerline{Florida Atlantic University}
   \centerline{777 Glades Road}
   \centerline{Boca Raton, FL 33431, USA}
} 

\medskip

\centerline{\scshape Konstantin Mischaikow}
\medskip
{\footnotesize
 \centerline{Rutgers University}
   \centerline{110 Frelinghusen Road}
   \centerline{Piscataway, NJ 08854, USA}
}

\medskip

\centerline{\scshape Robert C.A.M. Vandervorst}
\medskip
{\footnotesize
 \centerline{ VU University}
   \centerline{De Boelelaan 1081a}
   \centerline{1081 HV, Amsterdam, The Netherlands}
}

\bigskip


\begin{abstract}
The algebraic structure of the attractors in a dynamical system determine much of its global dynamics.
The collection of all attractors has a natural lattice structure, and this 
structure can be detected through attracting neighborhoods,
which can in principle be computed. Indeed, there has been much recent work
on developing and implementing general computational algorithms for global dynamics, which
are capable of computing attracting neighborhoods efficiently. Here we address the question 
of whether all of the algebraic structure of attractors can be captured by these methods.
%
%
%
\end{abstract}


\section{Introduction}
\label{sec:intro}
The issue of computability in the context of nonlinear dynamics has recently received considerable attention; see for example \cite{braverman,bournez} and references therein. An important implication of this work is that the topological structure of invariant sets need not be computable.
Perhaps this is not surprising, given that the work over the last century has clearly demonstrated the incredible diversity and complexity of invariant sets.
One interpretation of these results for practical applications  is that analyzing dynamics by computing invariant sets 
may lead to a level of computations that is too fine to be useful and perhaps ultimately unattainable.

With this in mind we consider the question of the computation of coarse dynamical structures for the following rather general setting.
A \emph{dynamical system} on a topological space $X$ is a continuous map $\varphi: \T^+ \times X \to X$ that
satisfies
\begin{enumerate}
\item [(i)] $\varphi(0,x) = x$ for all $x\in X$, and
\item [(ii)] $\varphi(t,\varphi(s,x)) = \varphi(t+s,x)$ for all $s,t\in \T^+$ and for all $x\in X$,
\end{enumerate}
where $\T$ denotes the time domain, which is  either $\Z$ or $\R$ and $\T^+:=\setof{t\in\T\mid t\geq 0}$.
As is discussed in detail in Section~\ref{subsec:approximationDS}, for the results presented in this paper there is no loss of generality in assuming the dynamics is generated by the continuous function $f\colon X\to X$ where $f(\cdot) := \varphi(1,\cdot)$.  
The most significant assumption we make is that $X$ is a compact metric space.  
We emphasize that we do {\em not} assume that $f$ is injective nor surjective. 

Recall that a set $U\subset X$ is an \emph{attracting neighborhood} for  $f$ if
\[
\omega(U,f):= \bigcap_{n\in\Z^+} \cl\left(\bigcup_{k=n}^\infty f^k(U)  \right) \subset \Int(U).
\]  
A set $A\subset X$ is an \emph{attractor}  if there exists an attracting neighborhood $U$  such that $A = \omega(U,f)$.  
The sets of all attracting neighborhoods and all attractors are denoted by $\sANbhd(X,f)$ and $\sAtt(X,f)$, respectively. 
We remark that in general a given system can have at most a countably infinite number of attractors.

Attractors are central to the study of nonlinear dynamics for at least two reasons.  
First, they are the invariant sets that arise from the asymptotic dynamics of regions of phase space, thus they capture the ``observable" dynamics.
Second, they are intimately related to the structure of the global dynamics.  More precisely, recall that Conley's fundamental decomposition theorem \cite{robinson} states that 
 the dynamics is gradient like outside of the chain recurrent set.
Furthermore, the chain recurrent set can be characterized using the set of attractors and their dual repellers.
With this in mind in \cite{KMV0} we discuss a combinatorial approach for identifying attracting neighborhoods and demonstrate that, even though there may be infinitely many attractors, it is possible to obtain arbitrarily good approximations in phase space of these attractors.  This in turn provides a constructive method for obtaining arbitrarily good approximations of the chain recurrent set.

From the perspective of understanding the dynamics of nonlinear models one  encounters the issue of minimal scales. 
Every model has a scale below which the model is no longer valid.
Any given numerical simulation has a minimal scale, and there is a maximal resolution for experimental measurements.
Especially in the latter  case, the maximal relevant resolution is often dependent on the location in phase space.
This issue of scale motivates recent work \cite{database,bush2012combinatorial,Ban,lyapunov} that focuses on rigorously computing global dynamical structures with an a priori choice of  maximal resolution of measurement.

Given a fixed scale there can be at most a finite subset of attractors that are observable. 
This suggests that understanding finite resolution dynamics requires a deeper understanding of the structure of the set of all attractors.
In \cite{KMV-1a} we  prove that $\sAtt(X,f)$ and $\sANbhd(X,f)$  are bounded, distributive lattices. 
The lattice operations for $\sANbhd(X,f)$ are straightforward, $\vee = \cup$ and $\wedge = \cap$.
The operations for $\sAtt(X,f)$ are more subtle; $\vee = \cup$, but the $\wedge$ operation is given by $A_1\wedge A_2=\omega(A_1\cap A_2,f)$. 
A consequence of the lattice structure is that a finite set of attractors generates a finite sublattice $\sA\subset \sAtt(X,f)$ of attractors.

By definition $\omega(\cdot,f)\colon \sANbhd(X,\varphi)\to \sAtt(X,f)$ is surjection. In \cite{KMV-1a} we show that this is, in fact, a lattice epimorphism.  
However, this fact does not provide any information, in and of itself, as to the structural relationship between a given finite lattice of attractors $\sA$, the dynamic information of interest, and the set of attracting neighborhoods $\omega(\cdot,f)^{-1}(\sA) \subset \sANbhd(X,f)$, which consist of the potentially observable or computational objects.
This is resolved by the following theorem which proves that the lattice structure of invariant dynamics of interest, namely attractors, is contained within the lattice structure of the observable or computable   dynamics, namely attracting neighborhoods.

\begin{theorem}
\label{thm:kmv1}
\cite[Theorem 1.2]{KMV-1a}
Let $\imath$ denote the inclusion map.
 For every finite  sublattice $\sA \subset \sAtt(X,f)$, there exists a  lattice monomorphism $k$ such that the following diagram 
\[
\begin{diagram}
\node{~} \node{\sANbhd(X,f)} \arrow{s,r,A}{\omega(\cdot,f)}\\
\node{\sA}  \arrow{e,l,V}{\imath} \arrow{ne,l,..,V}{k}  \node{\sAtt(X,f)} 
\end{diagram}
\]
commutes. 
\end{theorem}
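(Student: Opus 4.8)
The statement asserts that the lattice epimorphism $\omega(\,\cdot\,,f)$ admits a section over any finite sublattice $\sA$, i.e. that the finite lattice of attractors $\sA$ can be realized inside $\sANbhd(X,f)$. The plan is to reduce the construction of $k$ to a choice of attracting neighborhoods for the join-irreducible elements of $\sA$ via Birkhoff's Representation Theorem, observe that one of the two lattice operations is then automatic, and concentrate all of the work on the other. Since $\sA$ is a finite distributive lattice it is isomorphic to the lattice $\cO(J)$ of down-sets of its poset $J$ of join-irreducible elements, with $a\leftrightarrow\setof{j\in J\mid j\le a}$. A $\vee$-preserving lift is determined by a choice of attracting neighborhood $N_j$ for each $j\in J$ together with a base neighborhood for the bottom element; setting $k(a)=\bigcup_{j\le a}N_j$ gives $\omega(k(a),f)=\bigvee_{j\le a}j=a$, so $\omega(\,\cdot\,,f)\circ k=\imath$ and hence $k$ is automatically injective. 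Preservation of $\vee=\cup$ is free because in a distributive lattice a join-irreducible element is join-prime, so $j\le a\vee b$ iff $j\le a$ or $j\le b$.

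The entire content is therefore the preservation of $\wedge$. Monotonicity already gives $k(a\wedge b)\subseteq k(a)\cap k(b)$, so only the reverse inclusion is at issue, and taking principal down-sets this reduces to the single requirement
\[
N_j\cap N_{j'}\subseteq\bigcup_{i\le j\wedge j'}N_i\qquad\text{for all }j,j'\in J.\qquad(\star)
\]
I would verify $(\star)$ by an induction along a linear extension $j_1,\dots,j_m$ of $J$. Each initial segment $J_{t}=\setof{j_1,\dots,j_t}$ is a down-set, with associated sublattice $\sA_t$, and one checks that $j_t\wedge b=j_t^-\wedge b$ for every $b\in\sA_{t-1}$, where $j_t^-$ is the unique lower cover of $j_t$. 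Consequently, extending a lift $k_{t-1}$ of $\sA_{t-1}$ to $\sA_t$ amounts to producing a single attracting neighborhood $N=k_t(j_t)$ with $\omega(N,f)=j_t$ and
\[
N\cap T=P,\qquad T:=k_{t-1}\bigl(\textstyle\bigvee J_{t-1}\bigr),\quad P:=k_{t-1}(j_t^-)\subseteq T .
\]

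The crux is this last step, which is where the dynamics, and in particular the \emph{definition} of the meet in $\sAtt(X,f)$, enters. Such an $N$ exists exactly when $P$ can be completed to an attracting neighborhood of $j_t$ disjoint from $T\setminus P$, i.e. when $j_t\cap T\subseteq P$. Taking the neighborhoods to be closed trapping regions chosen small enough that $\omega(j_t\cap T,f)=j_t\wedge\bigl(\bigvee J_{t-1}\bigr)=j_t^-$, forward invariance of $T$ keeps the forward orbit of any $x\in j_t\cap T$ inside $j_t\cap T$, whence $\omega(x)\subseteq\omega(j_t\cap T,f)=j_t^-$; thus $j_t\cap T$ lies in the basin $\cB(j_t^-)$. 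As $j_t\cap T$ is compact and the attracting neighborhoods of $j_t^-$ exhaust this basin, the predecessor neighborhood $P$ can be taken to contain $j_t\cap T$, and then any sufficiently small attracting neighborhood of $j_t$ contained in $(X\setminus T)\cup P$ serves as $N$.

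The hard part, and the genuine obstacle, is that ``small enough'' in the previous step must hold \emph{simultaneously} for every join-irreducible and every pair: the constraint that $P$ absorb the transients of $j_t$ is a demand that a later element places on an earlier neighborhood, so the trapping regions cannot be selected greedily one at a time. The resolution is to produce the whole family at once — a single mutually adapted system of trapping regions indexed compatibly by $\sA$, equivalently a Lyapunov function for the finite lattice $\sA$ — and the substance of the proof is the existence of such an adapted family, after which the inductive verification of $(\star)$ is routine. I expect establishing this adapted family, together with the semicontinuity estimate guaranteeing $\omega(j_t\cap T,f)=j_t^-$ for sufficiently small $T$, to be the main technical work.
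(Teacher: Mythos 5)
Your reduction is the right combinatorial skeleton --- Birkhoff's representation, atoms indexed by join-irreducibles, induction along a linear extension, and your condition $(\star)$ is precisely the well-separation/meet condition ({\bf C1}--{\bf C3}) that drives the paper's own inductive constructions (Theorems~\ref{thm:conv-latt-2} and~\ref{thm:lift}). The $\vee$-preservation and automatic injectivity are fine, and your ``semicontinuity estimate'' is actually vacuous: for \emph{any} attracting neighborhood $T$ of $B=\bigvee J_{t-1}$, monotonicity gives $\omega(j_t\cap T)\subset \omega(j_t)\cap\omega(T)=j_t\cap B$, and since $\omega(j_t\cap T)$ is invariant and every invariant subset $S$ of $j_t\cap B$ satisfies $S=\omega(S)\subset\omega(j_t\cap B)=j_t\wedge B=j_t^-$, one gets $\omega(j_t\cap T)=j_t^-$ with no smallness hypothesis. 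But the crux of your induction --- producing $N$ with $N\cap T=P$ --- is exactly where the proof stops being a proof. As you correctly diagnose, the necessary inclusion $j_t\cap T\subset \Int(P)$ is a demand that step $t$ places on the neighborhood $P=k_{t-1}(j_t^-)$ fixed at earlier steps; enlarging $P$ means enlarging specific atoms $N_i$, which can destroy $(\star)$ for incomparable pairs already verified, and there is no argument that iterated re-enlargements stabilize. Your proposed resolution --- ``produce the whole family at once \ldots\ equivalently a Lyapunov function for the finite lattice $\sA$'' --- is asserted, not constructed, and in this generality (continuous $f$, neither injective nor surjective) the existence of such a mutually adapted system of trapping regions is essentially equivalent to the theorem itself; in this program lattice-adapted Lyapunov functions are obtained \emph{from} such lifts, not the other way around. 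So the proposal reduces the statement to an unproven claim of at least equal strength. Notably, the paper states outright (after Theorem~\ref{thm:conv-latt-1}) that no direct attractor-side proof is known, for exactly the reason your argument founders: $\wedge\neq\cap$ in $\sAtt(X,f)$ while $\wedge=\cap$ in $\sANbhd(X,f)$.

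The missing idea is duality. The actual proof (in \cite{KMV-1a}, mirrored here in Theorems~\ref{thm:conv-latt-2} and~\ref{thm:lift}) runs the identical Birkhoff/linear-extension induction on the \emph{repeller} side, where $\wedge=\cap$ holds both in $\sRep(X,f)$ and in $\sRNbhd(X,f)$. There the meet compatibility $R(\mu_{k+1})\cap R(\alpha)=R(\mu_{k+1}\cap\alpha)\subset \Int\,U_k(\lambda_k)$ holds automatically at the level of the repellers themselves, so conditions (i)--(ii) of Theorem~\ref{thm:lift} are achieved by shrinking only the \emph{newly chosen} neighborhood (any compact neighborhood of a repeller avoiding its dual attractor is a repelling neighborhood, \cite[Proposition 3.25]{KMV-1a}); no constraint ever propagates backward to previously fixed regions, and the greedy induction closes. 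The attractor statement then follows by the anti-isomorphisms $U\mapsto\cl(U^c)$ and $A\mapsto A^*$ of diagram \eqref{diag:AR2b}. In short: your diagnosis of the obstacle is accurate and your reduction matches the paper's machinery, but without the dualization (or a genuinely new construction of your adapted family) the central step of the proof is missing.
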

The homomorphism $k$ is called a lift of $\imath$ through $\omega(\cdot,f)$. 
The proof of Theorem~\ref{thm:kmv1} is nontrivial, especially since we are not assuming that $f$ is injective nor surjective.  Thus  \cite{KMV-1a} contains a detailed discussion and development of the definitions and properties of many of the standard dynamical concepts such as attractors, repellers, invariant sets, etc.~as well as corresponding neighborhoods of these objects. We do not repeat them in this paper, but recall them as necessary.

As suggested at the beginning of this introduction, the focus of this paper is on computation.
The computational methods we are interested in analyzing are based on a finite discretization, indexed by $\cX$, of the phase space $X$ and the computation of an outer approximation of a map $f\colon X\to X$ by a combinatorial multivalued map $\cF\colon\cX\mvmap\cX$ (see Section~\ref{sec:cgt}).
Observe that a combinatorial multivalued map is equivalent to a finite directed graph. 
The latter interpretation is useful from the perspective of algorithms, but treating $\cF$ as a map provides intuition as to how to define important dynamical analogues in the discrete setting. This is discussed in detail in Section~\ref{sec:comb-syst}, but  we also point out the work in \cite{McG,McGW} on closed relations.

Given our focus on attractors there are three structures arising from combinatorial dynamics that are of particular interest:~forward invariant sets, $\sInvset^+(\cX,\cF):=\setof{\cS\subset \cX\mid \cF(\cS)\subset (\cS)}$;
 attracting sets, $\sASet(\cX,\cF):=\setof{\cU\subset\cX \mid \bomega(\cU,\cF) \subset \cU}$; and 
 attractors, $\sAtt(\cX,\cF) := \setof{\cA\subset\cX \mid \cF(\cA)=\cA}$.
In Section~\ref{sec:comb-syst} we assign appropriate lattice structures to these sets. Note that these lattices are explicitly computable, since they are defined in terms of elementary operations on a finite directed graph.

To easily pass between the combinatorial  and continuous dynamics, we insist that the discretization of phase space be done with regular closed sets.  
Given a compact metric space $X$, the family of all regular closed sets $\scrR(X)$ forms a Boolean algebra.
As this gives rise to technical issues, it is important to note that the lattice operations for $\scrR(X)$ differ from those of $\sSet(X)$, in particular $\vee=\cup$ and $\wedge=\cl(\Int(\cdot)\cap\Int(\cdot))$.
The atoms of any finite sublattice of $\scrR(X)$ form a {\em grid}, which provides an appropriate discretization of the phase space $X$.
As indicated above the grid is indexed by $\cX$. We pass from subsets of $\cX$ to subsets of $X$, by means of an evaluation map
$\supp{\cdot}\colon \cX \to \scrR(X)$.  

Observe that, as a consequence of the discretization procedure, our computations can only represent elements of $\scrR(X)$, which is a strict subset of $\sSet(X)$.  
We denote the family of attracting neighborhoods of $f$ that are regular closed sets of $X$ by $\sANbhdR(X,f)$.
Even though $\sANbhdR(X,f)\subset \sANbhd(X,f)$, this inclusion is not a lattice homomorphism since, the lattice operations are different.  
Furthermore, for a fixed multivalued map $\cF$, an outer approximation for $f$, the evaluation map $\supp{\cdot}$ maps $\sASet(\cX,\cF)$ to a strict subset of $\sANbhdR(X,f)$.  
The lattice homomorphisms that relate the above mentioned lattices produce the following commutative diagram (see Remark~\ref{rem:comm-lift11} for the analogue for $\varphi$)
\begin{equation}
\label{diag:comm-lift11}
\begin{diagram}
\node{\IS^+(\cX,\cF)}\arrow{se,r,A}{\bomega(\cdot,\cF)}\arrow{e,l,V}{\imath}\node{\sASet(\cX,\cF)}\arrow{e,l,V}{|\cdot|}\arrow{s,r,A}{\bomega(\cdot,\cF)}\node{\sANbhdR(X,f)}\arrow{e,l,A}{\omega(\cdot,f)}\node{\sAtt(X,f)}\\
\node{}\node{\sAtt(\cX,\cF)}\arrow{ene,r}{\omega(|\cdot|,f)}
\end{diagram}
\end{equation}
where $\imath$ is the inclusion map.

Returning to the question of computability, the analoguous result to Theorem~\ref{thm:kmv1} in the context of a multivalued outer approximation $\cF$ of $f$ is the existence of a lifting for either of the following commutative diagrams
\begin{equation}
\label{eq:twodiagrams}
\begin{diagram}
\node{~} \node{\sASet(\cX,\cF)} \arrow{s,r,A}{\omega(\cdot,f)}\\
\node{\sA}  \arrow{e,l,V}{\imath} \arrow{ne,l,..,V}{}  \node{\sAtt(X,f)} 
\end{diagram}
\qquad
\text{or}
\quad
\begin{diagram}
\node{~} \node{\IS^+(\cX,\cF)} \arrow{s,r,A}{\omega(\cdot,f)}\\
\node{\sA}  \arrow{e,l,V}{\imath} \arrow{ne,l,..,V}{}  \node{\sAtt(X,f)} 
\end{diagram}
\end{equation}
for a given finite sublattice of attractors $\sA$.  Observe that by \eqref{diag:comm-lift11} a lift for the second diagram implies a lift for the first.  

If $\sAtt(X,f)$ is an infinite lattice, then any given approximation $\cF\colon \cX\mvmap\cX$  captures 
only a finite sublattice $\sA\subset \sAtt(X,f)$.
With this in mind we consider sequences of multivalued maps $\cF_n\colon \cX_n\mvmap\cX_n$ that provide arbitrarily close approximations of $f$. 
There are essentially two types of sequences that we consider, one that involves a coherent refinement of the grids associated with $\cX_n$ and the other where it is only assumed that the diameter of the grid 
can be made arbitrarily small.
The first is most relevant if one considers a numerical scheme based on a systematic refinement of phase space.
The second is relevant if one wants to compare approximations performed using different types of discretizations of phase space.
With a coherent refinement scheme we are able to prove the existence of a lifting, Theorem~\ref{thm:conv-latt-1}, based on the second diagram of \eqref{eq:twodiagrams}. The more general result, Theorem~\ref{thm:conv-latt-3}, is based on the first diagram.

We conclude this introduction with a brief outline of the paper. 
Section~\ref{sec:comb-syst} introduces the dynamics of combinatorial multivalued maps and the appropriate lattice structures.
This includes the lattices of backward invariant sets $\sInvset^-(\cX,\cF)$, repelling sets $\sRSet(\cX,\cF)$, and repellers $\sRep(\cX,\cF)$.
The duality between these lattices, associated with backward dynamics, and those associated with forward dynamics is presented in the commutative diagram \eqref{eq:diag+-}.  

In Section~\ref{sec:cont-graph} we focus on combinatorial multivalued maps as an approximation scheme for continuous nonlinear dynamics. 
We begin in Section~\ref{sec:cgt} by recasting the concept of grid \cite{Mrozek-grid} into the more general setting of regular closed subsets, 
cf.\ \cite{Walker}.  
Section~\ref{subsec:at-rep-neigh} contains results, summarized for the most part by Theorems~\ref{thm:first-evalf} and \ref{thm:commutative-diagram}, that relate the lattice structures of combinatorial systems with those of continuous systems.
Section~\ref{subsec:approximationDS} deals with the  issue of the approximation of dynamical systems where the time variable $\T = \R$.  As mentioned earlier in the introduction, one approach is to set $f(\cdot) = \varphi(\tau, \cdot)$, where if $\T = \R$ then it is permissible to choose any fixed $\tau >0$.  
For this approach the concept of outer approximation is sufficient.
The weakness of this approach is that from the perspective of obtaining optimal approximations it may be desirable to choose different values of $\tau$ on different regions of phase space.
An alternative approach developed in \cite{BKM} involves combinatorializing  the flow  via a triangulation of space and the multivalued mapping is defined by considering the behavior of the associated vector field on the vertices of the triangulation.
This method fits into our framework but requires the notion of a weak outer approximation as is demonstrated via the commutative diagram \eqref{diag:AR2a}.

Section~\ref{sec:str-conv} brings together the ideas of Sections~\ref{sec:comb-syst} and \ref{sec:cont-graph} to demonstrate the general computability of the lattices of interest. 
We begin in Section~\ref{subsec:conv-fam} with a discussion concerning the convergence  of outer approximations from a more classical numerical analysis perspective, i.e.\ tracking of individual orbits.
Section~\ref{subsec:resol} discusses the identification individual attractors or repellers using a outer approximations.
Section~\ref{subsec:PLG} returns to the issue of convergent sequences of outer approximations but from a lattice theoretic perspective. 
We also discuss Birkhoff's representation theorem for finite distributive lattices and discuss its application in the context of lifts.
Finally in Section~\ref{subset:lift-grid} we prove the desired lifting theorems, Theorem~\ref{thm:conv-latt-1} and  \ref{thm:conv-latt-3}.
The reader will immediately note that the details of the proofs are carried out in the context of repeller structures, Theorems~\ref{thm:conv-latt-2} and \ref{thm:lift}, and then duality is used to obtain Theorem~\ref{thm:conv-latt-1} and  \ref{thm:conv-latt-3}, respectively.
The reason for this is related to the lattice structures of $\sAtt(X,f)$ and $\sRep(X,f)$. In particular, $\wedge = \cap$ for  $\sRep(X,f)$, but not for $\sAtt(X,f)$.
This lack of symmetry arises from the fact that we do not assume that $f$ is either injective or surjective, and thus attractors and repellers have fundamentally different properties. The duality between between attractor/attracting neighborhoods and repellers/repelling neighborhoods is expressed in following diagram, cf.\ \eqref{diag:AR2}
\begin{equation}\label{diag:AR2b}
\begin{diagram}
\node{\sANbhd(X,f)} \arrow{e,l,<>}{^c} \arrow{s,l,A}{\displaystyle \omega}\node{\sRNbhd(X,f)} \arrow{s,r,A}{\displaystyle \alpha}\\
\node{\sAtt(X,f)} \arrow{e,l,<>}{^*}   \node{\sRep(X,f)} 
\end{diagram}
\end{equation}
where $\sRep(X,f)$ and $\sRNbhd(X,f)$ are the lattices of repellers and repelling neighborhoods, respectively. 

\begin{remark}
We include a variety of different lattices in this paper.
In each case the $\vee$ operation is simply the union of sets, but there are five different  $\wedge$  operations.
For the benefit of the reader we include the following tables, the first for topological structures and the second for combinatorial structures, as a simple summary.  
\vskip 12pt
\begin{center}
\begin{tabular}{|l|c|}
\hline
Lattice & $U \wedge V$\\
\hline
$\sAtt(X,f)$ & $\omega(U\cap V)$\\
$\sRep(X,f)$ & $U\cap V$\\
$\sANbhd(X,f)$ & $U\cap V$\\
$\sRNbhd(X,f)$ & $U\cap V$\\
$\sANbhdR(X,f)$ & $\cl(\Int(U)\cap\Int(V))$\\
$\sRNbhdR(X,f)$ & $\cl(\Int(U)\cap\Int(V))$\\
$\IS^\pm(X,f)$ & $U\cap V$\\
\hline
\end{tabular}
\qquad
\begin{tabular}{|l|c|}
\hline
Lattice & $\cU \wedge \cV$\\
\hline
$\sAtt(\cX,\cF)$ & $\bomega(\cU\cap \cV)$\\
$\sRep(\cX,\cF)$ & $\balpha(\cU\cap \cV)$\\
$\sASet(\cX,\cF)$ & $\cU\cap \cV$\\
$\sRSet(\cX,\cF)$ & $\cU\cap \cV$\\
$\sInvset^+(\cX,\cF)$ & $\cU\cap \cV$\\
$\sInvset^-(\cX,\cF)$ & $\cU\cap \cV$\\
\hline
\end{tabular}
\end{center}
\vskip 12pt

\end{remark}

\section{Combinatorial systems}
\label{sec:comb-syst}

In this section we discuss the dynamics of combinatorial multivalued maps. We begin with  basic properties, especially those related to
the asymptotic dynamics. We then discuss attractors, repellers and the combinatorial equivalences of their neighborhoods. Finally we discuss the concept of attractor-repeller pairs in this combinatorial setting. 

\subsection{Combinatorial multivalued maps}

Let $\cX$ be a finite set of vertices.
To emphasize the fact that we are interested in dynamics we denote mappings $\cF \colon \cX \to \sSet(\cX)$ by  $\cF\colon \cX \mvmap \cX$ and refer to them as \emph{combinatorial multivalued mappings} on $\cX$.
The inverse image of a element 
$\xi \in \cX$ is defined by
 \begin{equation}
\label{eqn:inverse-cms}
 \cF^{-1}(\xi) := \{ \eta\in \cX~|~ \xi\in \cF(\eta)\},
\end{equation}
which generates a combinatorial multivalued mapping denoted by $\cF^{-1}\colon\cX\mvmap \cX$.


\begin{definition}
\label{defn:total}
A multivalued mapping is \emph{left-total} if $\cF(\xi)\neq\varnothing$ for all $\xi \in \cX$ and \emph{right-total}
if $\cF^{-1}(\xi)\neq\varnothing$ for  all $\xi \in \cX$. A multivalued mapping is \emph{total} if it is both left- and right-total.
\end{definition}

The {\em $\bomega$-limit set} and {\em $\balpha$-limit set}   capture the asymptotic dynamics
 of a set $\cU\subset \cX$ under  $\cF\colon \cX\mvmap \cX$ and  are defined by
\[
\bomega(\cU) = \bigcap_{k\ge 0} \bigcup_{n\geq k} \cF^n(\cU) 
\quad\hbox{and}\quad 
\balpha(\cU) = \bigcap_{k\le 0} \bigcup_{n\leq k} \cF^n(\cU),
\]
respectively.  Observe that omega and alpha limit sets of nonempty sets may be empty, but they satisfy the following properties.

\begin{proposition}
\label{prop:combomega}
Let $\cF:\cX\mvmap\cX$ be a   multivalued mapping  
and let $\cU\subset \cX$.
Then,
\begin{enumerate}
\item[(i)]
there exists a $k_*\ge 0$ such that $\bomega(\cU) = \bigcup_{n\geq k} \cF^n(\cU)$ for all $k\ge k_*$;
\item[(ii)] $\cF(\bomega(\cU)) = \bomega(\cU)$ and 
$\bomega(\cF(\cU)) = \bomega(\cU)$, and thus $\bomega(\cU) \in \IS^+(\cX)$;
\item[(iii)] $\cF$ left-total and $\cU \not = \varnothing$ implies that $\bomega(\cU)$ is invariant and $\bomega(\cU)\not = \varnothing$;
\item[(iv)] if there exists $k_*>0$  such that $\cF^n(\cU) \subset \cU$ for $k\ge k_*$,  then $\bomega(\cU) \subset \cU$;
\item[(v)] $\cV\subset \cU$ implies $\bomega(\cV) \subset \bomega(\cU)$, and in particular $\bomega(\cV \cap \cU) \subset
\bomega(\cV) \cap \bomega(\cU)$;
\item[(vi)] $\bomega(\cV\cup \cU) = \bomega(\cV) \cup \bomega(\cU)$, and in particular $\bomega(\cU) = \bigcup_{\xi\in \cU} \bomega(\xi)$;
\item[(vii)] $\bomega(\bomega(\cU)) = \bomega(\cU)$.
\end{enumerate}
The same properties hold for $\balpha$-limit sets via time-reversal, i.e. replace $\cF$ by $\cF^{-1}$.
\end{proposition}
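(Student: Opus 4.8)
The whole argument is driven by the finiteness of $\cX$. Abbreviate $\cO_k(\cU) := \bigcup_{n\geq k}\cF^n(\cU)$, so that $\bomega(\cU) = \bigcap_{k\geq 0}\cO_k(\cU)$. Because $\cF^{n}(\cU) \subset \cO_k(\cU)$ for every $n \geq k$, the sets $\cO_k(\cU)$ form a nonincreasing sequence $\cO_0(\cU)\supset\cO_1(\cU)\supset\cdots$ of subsets of the finite set $\cX$, and such a sequence must stabilize. This gives (i) at once: there is a least $k_*$ with $\cO_k(\cU) = \cO_{k_*}(\cU)$ for all $k\geq k_*$, and since the intersection of a nonincreasing sequence equals its eventual value, $\bomega(\cU) = \cO_k(\cU) = \bigcup_{n\geq k}\cF^n(\cU)$ for every $k\geq k_*$.

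With (i) in hand the remaining parts are mostly formal. For (ii) I would use $\cF(\bigcup_\lambda \cS_\lambda) = \bigcup_\lambda\cF(\cS_\lambda)$ together with the semigroup law $\cF\circ\cF^n = \cF^{n+1}$: applying $\cF$ to $\bomega(\cU) = \cO_k(\cU)$ shifts the index, $\cF(\cO_k(\cU)) = \cO_{k+1}(\cU)$, which for $k\geq k_*$ is again $\bomega(\cU)$, so $\cF(\bomega(\cU)) = \bomega(\cU)$ and in particular $\bomega(\cU)\in\IS^+(\cX)$; likewise $\cF^n(\cF(\cU)) = \cF^{n+1}(\cU)$ gives $\cO_k(\cF(\cU)) = \cO_{k+1}(\cU)$, whose intersection over $k$ is still $\bomega(\cU)$, proving $\bomega(\cF(\cU)) = \bomega(\cU)$. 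Part (vii) is then immediate, since $\cF(\bomega(\cU)) = \bomega(\cU)$ iterates to $\cF^n(\bomega(\cU)) = \bomega(\cU)$ for all $n$, forcing $\bomega(\bomega(\cU)) = \bomega(\cU)$. For (iii), left-totality propagates nonemptiness, $\cF^n(\cU)\neq\varnothing$ for all $n$ by induction, so each $\cO_k(\cU)$ is a nonempty subset of $\cX$ and its stable value $\bomega(\cU)$ is nonempty; invariance is supplied by (ii), with left-totality guaranteeing that forward orbits inside $\bomega(\cU)$ never terminate. Part (iv) is the one-line containment $\bomega(\cU)\subset\cO_{k_*}(\cU)\subset\cU$ read off from the hypothesis, and (v) follows from the monotonicity $\cF^n(\cV)\subset\cF^n(\cU)$, the parenthetical remark coming from $\cV\cap\cU\subset\cV,\cU$.

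The only step that is not purely formal is (vi). The inclusion $\bomega(\cV)\cup\bomega(\cU)\subset\bomega(\cV\cup\cU)$ is monotonicity, but the reverse inclusion cannot be obtained by distributing $\bigcap_k$ over the union, since in general $\bigcap_k(A_k\cup B_k)\neq(\bigcap_k A_k)\cup(\bigcap_k B_k)$ for decreasing families. Here finiteness rescues the identity: choosing $k$ beyond the stabilization indices of both $\cV$ and $\cU$ and using $\cF^n(\cV\cup\cU) = \cF^n(\cV)\cup\cF^n(\cU)$, one has $\cO_k(\cV\cup\cU) = \cO_k(\cV)\cup\cO_k(\cU) = \bomega(\cV)\cup\bomega(\cU)$, and by (i) this common stable value is exactly $\bomega(\cV\cup\cU)$; the decomposition $\bomega(\cU) = \bigcup_{\xi\in\cU}\bomega(\xi)$ then follows by finite iteration over the finite set $\cU$. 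Finally, all of the above transfers verbatim to $\balpha$ by replacing $\cF$ with $\cF^{-1}$, since $\balpha(\cU)$ is precisely $\bomega(\cU)$ computed for $\cF^{-1}$ and $\cX$ is still finite. The main, and rather mild, obstacle is simply recognizing that (vi) genuinely relies on stabilization rather than on a set-theoretic distributive law.
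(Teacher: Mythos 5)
Your proof is correct and follows exactly the paper's approach: the paper proves only (i), observing that the nested tails $\bigcup_{n\geq k}\cF^n(\cU)$ must stabilize since $\cX$ is finite, and asserts that (ii)--(vii) ``can essentially be derived from Property (i).'' Your derivations of (ii)--(vii) --- including the correct observation that (vi) requires stabilization rather than distributing $\bigcap_k$ over unions --- supply precisely the details the paper leaves to the reader.
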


\begin{proof}
All properties can essentially be derived from Property (i), which we prove now.
Forward images are nested sets. Since $\cX$ is finite, it follows that there exists $k_*$ such that 
\[
\bigcup_{n\geq k} \cF^n(\cU) = \bigcup_{n\geq k_*} \cF^n(\cU)
\]
for all $k\geq k_*$.
\end{proof}

\subsection{Attractors and repellers}
\label{sec:cds-AR}

%
Alpha and omega limit sets capture the asymptotic dynamics of individual sets. Our goal for the remainder of this section is to understand the structure of the asymptotic dynamics of all sets. We begin with the concept of forward and backward invariance.
A set $\cS\subset \cX$ is \emph{forward invariant} if $\cF(\cS) \subset \cS$ and 
it
is \emph{backward invariant} if $\cF^{-1}(\cS) \subset \cS$. The sets of forward and backward invariant sets
in $\cX$ are denoted by $\IS^+(\cX,\cF)$ and $\IS^-(\cX,\cF)$ respectively.

%

\begin{proposition}
\label{prop:cms-forback}
The sets $\IS^-(\cX,\cF)$ and $\IS^+(\cX,\cF)$ are
finite distributive lattices with respect to intersection and union.
The mapping $\cU\mapsto \cU^c$ is an involute lattice anti-isomorphism between $\IS^-(\cX,\cF)$ and $\IS^+(\cX,\cF)$. \end{proposition}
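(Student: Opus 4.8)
The plan is to realize both $\IS^+(\cX,\cF)$ and $\IS^-(\cX,\cF)$ as bounded sublattices of the power-set Boolean algebra $(\sSet(\cX),\cap,\cup)$, from which finiteness and distributivity are automatic, and then to verify that set-complementation exchanges the two families while reversing the lattice operations. Throughout I read $\cF(\cS)=\bigcup_{\xi\in\cS}\cF(\xi)$, so that $\cF$ is monotone and distributes over unions, and likewise for $\cF^{-1}$.

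First I would establish closure of $\IS^+(\cX,\cF)$ under $\cap$ and $\cup$. The union case follows immediately from $\cF(\cS_1\cup\cS_2)=\cF(\cS_1)\cup\cF(\cS_2)$: if $\cF(\cS_i)\subset\cS_i$ then $\cF(\cS_1\cup\cS_2)\subset\cS_1\cup\cS_2$. For intersections I would use monotonicity, $\cF(\cS_1\cap\cS_2)\subset\cF(\cS_1)\cap\cF(\cS_2)\subset\cS_1\cap\cS_2$. The identical two computations, applied to $\cF^{-1}$ in place of $\cF$, give closure of $\IS^-(\cX,\cF)$. Since $\varnothing$ and $\cX$ belong to both families, each is a bounded sublattice of $\sSet(\cX)$; finiteness is clear because $\cX$ is finite. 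Distributivity then costs nothing: every sublattice of a distributive lattice is distributive, and $\sSet(\cX)$ is a Boolean algebra.

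The heart of the second statement is the characterization
\[
\cU\in\IS^-(\cX,\cF)\quad\Longleftrightarrow\quad \cU^c\in\IS^+(\cX,\cF).
\]
I would prove it by unwinding the definition \eqref{eqn:inverse-cms}. The condition $\cF^{-1}(\cU)\subset\cU$ says precisely that every $\eta$ with $\cF(\eta)\cap\cU\neq\varnothing$ already lies in $\cU$; contrapositively, every $\eta\in\cU^c$ satisfies $\cF(\eta)\cap\cU=\varnothing$, i.e.\ $\cF(\eta)\subset\cU^c$. Quantifying over $\eta\in\cU^c$ this is exactly $\cF(\cU^c)\subset\cU^c$, the forward invariance of $\cU^c$. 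This is the one step that is not purely formal, since it is where the adjoint relationship between $\cF$ and $\cF^{-1}$ is used, and I expect it to be the only genuine obstacle in the argument.

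Finally I would assemble the anti-isomorphism. The map $c\colon\cU\mapsto\cU^c$ sends $\IS^-(\cX,\cF)$ into $\IS^+(\cX,\cF)$ by the equivalence just shown, and reading the same equivalence with the roles of $\cF$ and $\cF^{-1}$ interchanged sends $\IS^+(\cX,\cF)$ back into $\IS^-(\cX,\cF)$; since $(\cU^c)^c=\cU$, the map $c$ is an involution intertwining the two lattices and hence a bijection. De~Morgan's laws $(\cU\cap\cV)^c=\cU^c\cup\cV^c$ and $(\cU\cup\cV)^c=\cU^c\cap\cV^c$ then show that $c$ carries $\wedge=\cap$ to $\vee=\cup$ and $\vee=\cup$ to $\wedge=\cap$, which is exactly the statement that $c$ is a lattice anti-isomorphism. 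Everything after the complement characterization is routine De~Morgan bookkeeping.
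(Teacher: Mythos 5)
Your proposal is correct and follows essentially the same route as the paper: the paper likewise leaves the sublattice/distributivity part as a routine check, establishes the complementation step via the same adjunction $\eta\in\cF^{-1}(\xi)\iff\xi\in\cF(\eta)$ (phrased there as a proof by contradiction where you argue contrapositively), and concludes the anti-isomorphism from De~Morgan's laws. Your filled-in details for closure under $\cap$ and $\cup$ and the inheritance of distributivity from $\sSet(\cX)$ are exactly the intended argument.
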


\begin{proof}
We leave the proof that  $\IS^-(\cX,\cF)$ and $\IS^+(\cX,\cF)$ are
finite distributive lattices with respect to intersection and union to the reader.

To show that set complement maps $\IS^+(\cX,\cF)$ to $\IS^-(\cX,\cF)$ consider $\cU\in \IS^+(\cX,\cF)$ and $\xi\in \cU^c$.
Suppose $\eta \in \cF^{-1}(\xi)\cap \cU$. Then 
\[
\xi \in \cF\bigl( \eta\bigr) \subset \cF(\cU) \subset \cU,
\]
which contradicts the fact that $\xi\in \cU^c$. We conclude that $\cF^{-1}(\cU^c) \subset \cU^c$, and therefore 
$\cU^c\in \IS^-(\cX,\cF)$. 
The same arguments hold when $\cU\in \IS^-(\cX,\cF)$ is backward invariant.
The fact that the map $\cU \mapsto \cU^c$ is a lattice anti-isomorphism follows from  De Morgan's laws.
\end{proof}


To characterize the asymptotic dynamics of forward and backward invariant sets we make use of the following structures.


\begin{definition}
\label{defn:combattractor}
Let $\cF\colon\cX\mvmap\cX$ be a combinatorial multivalued mapping.
A set $\cA \subset \cX$ is an  \emph{attractor}  for $\cF$ if 
$\cF(\cA)=\cA$. 
A set $\cR\subset \cX$ is a \emph{repeller} for $\cF$  if $\cF^{-1}(\cR) = \cR$.
 The sets of all attractors
and repellers in $\cX$ are denoted by $\sAtt(\cX,\cF)$ and $\sRep(\cX,\cF)$ respectively.
\end{definition}

For 
$\cA,\cA' \in \sAtt(\cX,\cF)$ define
\[
\cA\vee\cA' = \cA \cup \cA'\quad {\rm and}\quad\cA \wedge \cA'  = \bomega(\cA\cap\cA').
\]
Similarly, for  
$\cR,\cR' \in \sRep(\cX,\cF)$ define
$$
\cR\vee\cR' = \cR \cup \cR'\quad  {\rm and}\quad\cR \wedge \cR' = \balpha(\cR\cap\cR').
$$

\begin{proposition}
\label{prop:cms-attrep}
The sets $\bigl(\sAtt(\cX,\cF),\wedge,\vee\bigr)$ 
and $\bigl(\sRep(\cX,\cF),\wedge,\vee\bigr)$ are finite, distributive lattices.
\end{proposition}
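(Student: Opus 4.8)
The plan is to exploit the fact that the partial order underlying both proposed operations is just set inclusion, so that verifying the lattice axioms reduces to checking that $\vee$ and $\wedge$ compute the join and meet in the poset $(\sAtt(\cX,\cF),\subset)$. Commutativity, associativity and absorption are then automatic, and only distributivity and the duality for repellers need separate attention. The single computational fact I would establish at the outset is that $\bomega(\cA)=\cA$ for every attractor $\cA$: since $\cF(\cA)=\cA$ gives $\cF^n(\cA)=\cA$ for all $n$, Proposition~\ref{prop:combomega}(i) yields $\bomega(\cA)=\bigcup_{n\ge k_*}\cF^n(\cA)=\cA$.

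First I would check that the operations return attractors. For the join, $\cF(\cA\cup\cA')=\cF(\cA)\cup\cF(\cA')=\cA\cup\cA'$, so $\cA\cup\cA'\in\sAtt(\cX,\cF)$. For the meet, Proposition~\ref{prop:combomega}(ii) gives $\cF(\bomega(\cA\cap\cA'))=\bomega(\cA\cap\cA')$, so $\bomega(\cA\cap\cA')\in\sAtt(\cX,\cF)$. Next I would verify that $\cup$ is the least upper bound and $\bomega(\cap)$ the greatest lower bound in $(\sAtt(\cX,\cF),\subset)$. The join is immediate: $\cA\cup\cA'$ is the smallest set, hence the smallest attractor, containing both. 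For the meet, monotonicity (Proposition~\ref{prop:combomega}(v)) together with $\bomega(\cA)=\cA$ gives $\bomega(\cA\cap\cA')\subset\bomega(\cA)=\cA$ and likewise $\subset\cA'$, so it is a lower bound; and if $\cC\in\sAtt(\cX,\cF)$ satisfies $\cC\subset\cA\cap\cA'$, then $\cC=\bomega(\cC)\subset\bomega(\cA\cap\cA')$, so it is the greatest lower bound. Since every pair in the finite poset $(\sAtt(\cX,\cF),\subset)$ now has a meet and a join, $\sAtt(\cX,\cF)$ is a finite lattice with the stated operations.

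It remains to prove distributivity, and here the key observation is that Proposition~\ref{prop:combomega}(vi) makes it essentially free: using distributivity of set intersection over union and then (vi),
\[
\cA\wedge(\cA'\vee\cA'')=\bomega\bigl((\cA\cap\cA')\cup(\cA\cap\cA'')\bigr)=\bomega(\cA\cap\cA')\cup\bomega(\cA\cap\cA'')=(\cA\wedge\cA')\vee(\cA\wedge\cA'').
\]
The repeller statement then follows by duality: $\cR$ is a repeller for $\cF$ precisely when it is an attractor for $\cF^{-1}$, and $\balpha(\cdot)$ computed for $\cF$ is $\bomega(\cdot)$ computed for $\cF^{-1}$, so $\bigl(\sRep(\cX,\cF),\wedge,\vee\bigr)=\bigl(\sAtt(\cX,\cF^{-1}),\wedge,\vee\bigr)$ and the result transfers via the time-reversal clause of Proposition~\ref{prop:combomega}.

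I expect the only genuinely delicate point to be the greatest-lower-bound computation for $\wedge$, where it is essential that a lower bound $\cC$ is itself an attractor so that $\cC=\bomega(\cC)$ may be invoked; the distributivity, which might naively look like the hard part, is disposed of immediately by property (vi).
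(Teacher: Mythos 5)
Your proposal is correct and takes essentially the same route as the paper's proof: closure of $\vee = \cup$ from $\cF(\cA\cup\cA') = \cF(\cA)\cup\cF(\cA')$, closure of $\wedge = \bomega(\,\cdot\cap\cdot\,)$ via Proposition~\ref{prop:combomega}(ii), and distributivity by pushing the set identity $\cA\cap(\cA'\cup\cA'') = (\cA\cap\cA')\cup(\cA\cap\cA'')$ through $\bomega$ using Proposition~\ref{prop:combomega}(vi). Your additions---using $\bomega(\cA)=\cA$ to verify that the two operations really are the meet and join in $(\sAtt(\cX,\cF),\subset)$, so the lattice axioms come for free, and the explicit time-reversal identification $\sRep(\cX,\cF)=\sAtt(\cX,\cF^{-1})$ with $\balpha$ as $\bomega$ for $\cF^{-1}$---are sound and merely make explicit what the paper disposes of with ``this proves that $\sAtt(\cX,\cF)$ is a lattice'' and ``the arguments for $\sRep(\cX,\cF)$ are symmetric.''
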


\begin{proof}  
Let $\cA,\cA'\in \sAtt(\cX,\cF)$ be attractors. Then
  $\cF(\cA \cup \cA') = \cF(\cA) \cup \cF(\cA') = \cA \cup \cA'$, and thus $\cA\cup \cA' \in \sAtt(\cX,\cF)$.
  Similarly, $\cA\wedge \cA' = \bomega(\cA\cap \cA')$, and therefore $\cF(\cA\wedge\cA') = \cA\wedge\cA'$ by
 Proposition \ref{prop:combomega}(ii), which proves $\cA\wedge \cA' \in \sAtt(\cX,\cF)$.  
 This proves that $\sAtt(\cX,\cF)$ is a lattice. The same holds for $\sRep(\cX,\cF)$.
 It remains to show that both sublattices are distributive.

Let $\cA,\cA'\cA'' \in \sAtt(\cX,\cF)$. Then
\begin{eqnarray*}
(\cA\wedge \cA') \vee (\cA\wedge \cA'') & = &  \bomega(\cA\cap \cA') \cup \bomega(\cA\cap \cA'')\\
&= &  \bomega\bigl((\cA\cap \cA') \cup (\cA\cap \cA'')\bigr)\\
&=& \bomega\bigl(\cA\cap (\cA'\cup \cA'')\bigr) = \bomega(\cA) \wedge \bomega(\cA'\cup  \cA'')\\
&=& \cA\wedge (\cA'\vee \cA''),
\end{eqnarray*}
which proves distributivity.
The arguments for $\sRep(\cX,\cF)$ are  symmetric.
\end{proof}

\begin{remark}
Because the lattice operations are distinct,  $\sAtt(\cX,\cF)$ and $\sRep(\cX,\cF)$ cannot be viewed as sublattices  of $\IS^+(\cX,\cF)$ and $\IS^-(\cX,\cF)$, respectively.
\end{remark}

By Proposition \ref{prop:combomega}(ii) every omega limit set  is an attractor, and similarly, every alpha limit set  is a repeller.  The following result adds structure to this observation.

\begin{proposition}
\label{att-rep-fbi}
The functions 
\[\bomega\colon \IS^+(\cX,\cF) \to \sAtt(\cX,\cF)\quad \hbox{ and}\quad \balpha\colon \IS^-(\cX,\cF) \to \sRep(\cX,\cF),
\]
are lattice epimomorphisms.
\end{proposition}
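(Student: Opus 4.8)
The plan is to verify the four ingredients of a lattice epimorphism for $\bomega$: that it lands in $\sAtt(\cX,\cF)$, that it preserves $\vee$ and $\wedge$, and that it is surjective; the statement for $\balpha$ then follows by the time-reversal duality recorded at the end of Proposition~\ref{prop:combomega}. Three of these four steps are immediate. Well-definedness is Proposition~\ref{prop:combomega}(ii), which gives $\cF(\bomega(\cU))=\bomega(\cU)$, i.e.\ $\bomega(\cU)\in\sAtt(\cX,\cF)$. Preservation of joins, $\bomega(\cU\cup\cV)=\bomega(\cU)\cup\bomega(\cV)$, is exactly Proposition~\ref{prop:combomega}(vi). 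Surjectivity is handled by noting that any attractor $\cA$ satisfies $\cF(\cA)=\cA$, hence $\cA\in\IS^+(\cX,\cF)$ and $\cF^n(\cA)=\cA$ for all $n$, so $\bomega(\cA)=\cA$; thus every attractor is a value of $\bomega$.

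The substance of the proof is preservation of meets. Since $\wedge=\cap$ in $\IS^+(\cX,\cF)$ while $\wedge$ in $\sAtt(\cX,\cF)$ is given by $\bomega(\cdot\cap\cdot)$, the required identity reads
\[
\bomega(\cU\cap\cV)=\bomega\bigl(\bomega(\cU)\cap\bomega(\cV)\bigr)
\]
for forward invariant $\cU,\cV$. The inclusion $\subset$ is formal: monotonicity (Proposition~\ref{prop:combomega}(v)) gives $\bomega(\cU\cap\cV)\subset\bomega(\cU)\cap\bomega(\cV)$, and applying $\bomega$ together with idempotence (Proposition~\ref{prop:combomega}(vii)) yields $\bomega(\cU\cap\cV)=\bomega(\bomega(\cU\cap\cV))\subset\bomega(\bomega(\cU)\cap\bomega(\cV))$.

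The reverse inclusion $\supset$ is the only place where forward invariance is used, and it is the step I expect to be the main obstacle. The key observation is that for a forward invariant $\cU$ one has $\cF^n(\cU)\subset\cU$ for all $n\ge 1$, so Proposition~\ref{prop:combomega}(iv) (with $k_*=1$) gives $\bomega(\cU)\subset\cU$, and likewise $\bomega(\cV)\subset\cV$. Hence $\bomega(\cU)\cap\bomega(\cV)\subset\cU\cap\cV$, and a final application of monotonicity produces $\bomega(\bomega(\cU)\cap\bomega(\cV))\subset\bomega(\cU\cap\cV)$, completing the equality and hence the proof that $\bomega$ is a homomorphism. The dual statement for $\balpha$, $\IS^-(\cX,\cF)$ and $\sRep(\cX,\cF)$ follows verbatim with $\cF$ replaced by $\cF^{-1}$, using backward invariance in the form $\balpha(\cU)\subset\cU$.
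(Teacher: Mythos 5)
Your proposal is correct and follows essentially the same route as the paper: joins via Proposition~\ref{prop:combomega}(vi), surjectivity from $\sAtt(\cX,\cF)\subset\IS^+(\cX,\cF)$ with $\bomega|_{\sAtt(\cX,\cF)}=\id$, and the meet identity established by exactly the same chain of inclusions (monotonicity plus idempotence in one direction, and forward invariance giving $\bomega(\cU)\subset\cU$ in the other), which the paper merely compresses into a single cyclic chain starting and ending at $\cA\wedge\cA'$. The only cosmetic difference is that the paper also records $\bomega(\varnothing)=0$ and $\bomega(\cX)=1$ explicitly, but this follows automatically from surjectivity and order preservation, so nothing is missing.
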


\begin{proof}
We give the proof for $\bomega\colon \IS^+(\cX,\cF) \to \sAtt(\cX,\cF)$.
From Proposition \ref{prop:combomega}(vi) it follows that $\bomega(\cU\cup \cU') = \bomega(\cU) \cup \bomega(\cU')$.  For the meet operation we argue as follows.
Since $\cU$ and $\cU'$ are forward invariant, we have  $\cA = \bomega(\cU)\subset \cU$ and $\cA' = \bomega(\cU')\subset \cU'$. Then
\begin{eqnarray*}
\cA \wedge \cA' &=& \bomega(\cA \cap \cA') \subset \bomega(\cU\cap \cU') = \bomega\bigl( \bomega(\cU\cap \cU')\bigr)\\
 &\subset& \bomega\bigl( \bomega(\cU)\cap \bomega(\cU')\bigr) = \cA\wedge \cA',
\end{eqnarray*}
which proves that $\bomega(\cU\cap \cU') = \cA \wedge \cA' = \bomega(\cU) \wedge \bomega(\cU')$, and
therefore $\bomega$ is a lattice homomorphism. The map is surjective, since $\sAtt(\cX,\cF) \subset \IS^+(\cX,\cF)$ and $\bomega|_{\sAtt(\cX,\cF)} = \id$.
Moreover,  $\bomega(\varnothing) = \varnothing = 0$ in $\sAtt(\cX,\cF)$, and $\bomega(\cX,\cF) = 1$ in $\sAtt(\cX,\cF)$.
\end{proof}


While the previous proposition demonstrates that all attractors and repellers can be obtained as  the omega and alpha limit sets of a forward and backward invariant sets, there are larger collections of sets that lead to attractors and repellers. 
An \emph{attracting set} $\cU$ has the property that $\bomega(\cU) \subset \cU$, and the attracting
sets are denoted by $\sASet(\cX,\cF)$. Similarly, a \emph{repelling set} is defined by $\balpha(\cU) \subset \cU$, and the repelling sets are denoted by
$\sRSet(\cX,\cF)$. 
Observe that forward and backward invariant sets are attracting and repelling sets, respectively, but not vice-versa.

\begin{proposition}
The sets $\sASet(\cX,\cF)$ and $\sRSet(\cX,\cF)$ are finite  sublattices of $\sSet(\cX,\cF)$ and
therefore finite distributive lattices. 
\end{proposition}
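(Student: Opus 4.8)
The plan is to verify directly that $\sASet(\cX,\cF)$ is closed under the two operations $\cup$ and $\cap$ of the ambient lattice $\sSet(\cX)$, and that the repelling case follows by time-reversal. Once closure is established, everything else is automatic: since $\cX$ is finite, $\sSet(\cX)$ is a finite Boolean algebra, hence a finite distributive lattice, and any subset closed under union and intersection is a finite distributive sublattice. Thus the entire content of the statement is the stability of the defining condition $\bomega(\cU)\subset\cU$ under the two set operations, and for this I would simply invoke the monotonicity and union properties of $\bomega$ already recorded in Proposition~\ref{prop:combomega}.

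First I would handle the join. Given $\cU,\cV\in\sASet(\cX,\cF)$, so that $\bomega(\cU)\subset\cU$ and $\bomega(\cV)\subset\cV$, Proposition~\ref{prop:combomega}(vi) gives
\[
\bomega(\cU\cup\cV)=\bomega(\cU)\cup\bomega(\cV)\subset\cU\cup\cV,
\]
so $\cU\cup\cV\in\sASet(\cX,\cF)$. Next I would handle the meet using monotonicity: by Proposition~\ref{prop:combomega}(v),
\[
\bomega(\cU\cap\cV)\subset\bomega(\cU)\cap\bomega(\cV)\subset\cU\cap\cV,
\]
so $\cU\cap\cV\in\sASet(\cX,\cF)$ as well. (One also notes $\bomega(\varnothing)=\varnothing\subset\varnothing$ and $\bomega(\cX)\subset\cX$, so the lattice is bounded with $0=\varnothing$ and $1=\cX$.) For $\sRSet(\cX,\cF)$ the argument is identical after replacing $\cF$ by $\cF^{-1}$ and $\bomega$ by $\balpha$, using that Proposition~\ref{prop:combomega} holds verbatim for $\balpha$-limit sets by time-reversal.

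There is no real obstacle here; the only point worth underscoring is that the lattice operations on $\sASet(\cX,\cF)$ are genuinely the restrictions of $\cup$ and $\cap$ from $\sSet(\cX)$, so that $\sASet(\cX,\cF)$ is an honest sublattice. This is in contrast to $\sAtt(\cX,\cF)$, whose meet is $\bomega(\cA\cap\cA')$ rather than $\cA\cap\cA'$, and which is therefore \emph{not} a sublattice of $\IS^+(\cX,\cF)$ (cf.\ the preceding remark). The step that makes the intersection closure work—and that distinguishes attracting sets from attractors—is precisely the monotonicity inclusion $\bomega(\cU\cap\cV)\subset\bomega(\cU)\cap\bomega(\cV)$ of Proposition~\ref{prop:combomega}(v), which for an attracting set is enough to keep $\bomega$ of the intersection inside the intersection without having to enlarge the meet to an omega-limit set.
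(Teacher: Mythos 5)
Your proof is correct and is essentially identical to the paper's: both establish closure of $\sASet(\cX,\cF)$ under $\cup$ and $\cap$ via the same two containments, $\bomega(\cU\cup\cV)=\bomega(\cU)\cup\bomega(\cV)\subset\cU\cup\cV$ and $\bomega(\cU\cap\cV)\subset\bomega(\cU)\cap\bomega(\cV)\subset\cU\cap\cV$, from Proposition~\ref{prop:combomega}(v)--(vi), with the repelling case handled by the symmetric argument. Your added remarks on boundedness and on the contrast with $\sAtt(\cX,\cF)$ are accurate but not needed beyond what the paper records.
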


\begin{proof}
The proof for $\sASet(\cX,\cF)$ follows from the following containments
\[
\begin{aligned}
\bomega(\cU\cup\cU') &= \bomega(\cU) \cup \bomega(\cU') \subset \cU\cup \cU',\\
\bomega(\cU\cap\cU') &\subset \bomega(\cU) \cap \bomega(\cU') \subset \cU\cap \cU'.
\end{aligned}
\]
The proof for $\sRSet(\cX,\cF)$ is similar.
\end{proof}

The same proof as that of Proposition~\ref{att-rep-fbi} leads to the following result.
\begin{proposition}
\label{att-rep-aset}
The mappings $\bomega: \sASet(\cX,\cF) \to \sAtt(\cX,\cF)$ and $\balpha: \sRSet(\cX,\cF) \to \sRep(\cX,\cF)$  are lattice epimorphisms.
\end{proposition}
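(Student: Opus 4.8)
The plan is to mirror the proof of Proposition~\ref{att-rep-fbi} almost verbatim, since the only place where forward invariance of $\cU,\cU'$ was used there was to conclude $\bomega(\cU)\subset\cU$ and $\bomega(\cU')\subset\cU'$ --- and this containment is precisely the \emph{defining} property of an attracting set. So I would begin by recording that for $\cU\in\sASet(\cX,\cF)$ we have $\cA:=\bomega(\cU)\subset\cU$ by definition, which is exactly the hypothesis the meet argument requires. This observation is the conceptual heart of the matter, and once it is isolated the rest is routine.

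For the join I would simply invoke Proposition~\ref{prop:combomega}(vi), which gives $\bomega(\cU\cup\cU')=\bomega(\cU)\cup\bomega(\cU')=\bomega(\cU)\vee\bomega(\cU')$ with no invariance hypothesis needed at all. For the meet, writing $\cA=\bomega(\cU)$ and $\cA'=\bomega(\cU')$, I would reproduce the chain of inclusions
\[
\cA\wedge\cA'=\bomega(\cA\cap\cA')\subset\bomega(\cU\cap\cU')=\bomega\bigl(\bomega(\cU\cap\cU')\bigr)\subset\bomega\bigl(\bomega(\cU)\cap\bomega(\cU')\bigr)=\cA\wedge\cA',
\]
where the first inclusion uses $\cA\subset\cU$, $\cA'\subset\cU'$ together with monotonicity Proposition~\ref{prop:combomega}(v), the middle equality is the idempotence (vii), and the second inclusion again uses (v) via $\bomega(\cU\cap\cU')\subset\bomega(\cU)\cap\bomega(\cU')$. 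Squeezing then yields $\bomega(\cU\cap\cU')=\cA\wedge\cA'=\bomega(\cU)\wedge\bomega(\cU')$, so $\bomega$ is a lattice homomorphism.

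It remains to check surjectivity and preservation of bounds. For surjectivity I would note that every attractor $\cA$ satisfies $\cF(\cA)=\cA$, hence $\bomega(\cA)=\cA\subset\cA$, so $\sAtt(\cX,\cF)\subset\sASet(\cX,\cF)$ and $\bomega|_{\sAtt(\cX,\cF)}=\id$; this gives a section and in particular surjectivity. Finally $\bomega(\varnothing)=\varnothing$ is the bottom of $\sAtt(\cX,\cF)$, and since $\cX\in\sASet(\cX,\cF)$ maps to the top, the bounds are preserved, so $\bomega$ is an epimorphism of bounded lattices. The statement for $\balpha\colon\sRSet(\cX,\cF)\to\sRep(\cX,\cF)$ follows by the time-reversal symmetry already built into Proposition~\ref{prop:combomega} (replacing $\cF$ by $\cF^{-1}$). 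I do not anticipate a genuine obstacle here; the only subtlety is the bookkeeping verification that the attracting-set condition supplies exactly the containment that forward invariance supplied before, so that the identical argument goes through.
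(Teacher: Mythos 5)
Your proposal is correct and is precisely the paper's argument: the paper proves Proposition~\ref{att-rep-aset} by simply asserting that the proof of Proposition~\ref{att-rep-fbi} carries over, and you have correctly identified why --- the only use of forward invariance there was to obtain $\bomega(\cU)\subset\cU$, which is the defining property of an attracting set --- and spelled out the resulting chain of inclusions, surjectivity via $\sAtt(\cX,\cF)\subset\sASet(\cX,\cF)$ with $\bomega|_{\sAtt(\cX,\cF)}=\id$, and duality for $\balpha$, all exactly as intended.
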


Proposition \ref{prop:cms-forback} establishes  $\cU \mapsto \cU^c$ as a lattice anti-isomorphism between $\IS^+(\cX,\cF)$ and
$\IS^-(\cX,\cF)$. The result is true for  attracting and repelling sets. To prove this we make use of the following 
result.

\begin{proposition}
\label{prop:char-attset}
A set $\cU$ is an attracting set if and only if there exists $k>0$ such that $\cF^n(\cU) \subset \cU$ for all $n\ge k$. 
Similarly, a set $\cU$ is a repelling set if and only if there exists 
$k\ge 0$ such that $\cF^{-n}(\cU) \subset \cU$ for all $n\ge k$.
\end{proposition}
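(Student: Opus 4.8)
The plan is to prove both equivalences, and by the time-reversal symmetry noted throughout this section it suffices to give the argument for attracting sets; the repelling case then follows by replacing $\cF$ with $\cF^{-1}$ and $\bomega$ with $\balpha$. Recall that $\cU$ is an attracting set precisely when $\bomega(\cU)\subset\cU$. So I must show that $\bomega(\cU)\subset\cU$ holds if and only if there exists $k>0$ with $\cF^n(\cU)\subset\cU$ for all $n\ge k$.

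For the direction ($\Leftarrow$), suppose such a $k$ exists. This is almost immediate from the tools already available: the hypothesis $\cF^n(\cU)\subset\cU$ for all $n\ge k$ is exactly the hypothesis of Proposition~\ref{prop:combomega}(iv), which concludes $\bomega(\cU)\subset\cU$. Thus $\cU$ is an attracting set, and this direction requires essentially no new work.

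For the direction ($\Rightarrow$), suppose $\cU$ is an attracting set, so $\bomega(\cU)\subset\cU$. Here I would invoke Proposition~\ref{prop:combomega}(i), which guarantees a $k_*\ge 0$ such that $\bomega(\cU)=\bigcup_{n\ge k}\cF^n(\cU)$ for every $k\ge k_*$. Choosing any $k\ge\max\{k_*,1\}$, I then have, for each individual $n\ge k$, that $\cF^n(\cU)\subset\bigcup_{m\ge k}\cF^m(\cU)=\bomega(\cU)\subset\cU$. This delivers exactly the stabilization property claimed, with the same $k$ serving for all $n\ge k$.

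The argument is short because the real content has been front-loaded into Proposition~\ref{prop:combomega}; the only point demanding a moment's care is the bookkeeping on the index. One must ensure the chosen $k$ is strictly positive (the statement asks for $k>0$ in the attracting case) while Proposition~\ref{prop:combomega}(i) only supplies $k_*\ge 0$, so taking $k\ge\max\{k_*,1\}$ handles this cleanly; and one must correctly read the equality $\bomega(\cU)=\bigcup_{n\ge k}\cF^n(\cU)$ as containing each tail image $\cF^n(\cU)$ for $n\ge k$. The main (mild) obstacle is simply to make sure the quantifier structure matches: the characterization asserts a single $k$ working uniformly for all $n\ge k$, and the finiteness-driven stabilization of Proposition~\ref{prop:combomega}(i) is precisely what provides this uniformity, rather than a merely eventual or $n$-dependent inclusion.
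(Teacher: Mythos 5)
Your proof is correct and follows essentially the same route as the paper: the forward direction via Proposition~\ref{prop:combomega}(i) (stabilization of the tail union, hence each $\cF^n(\cU)\subset\bomega(\cU)\subset\cU$) and the converse via Proposition~\ref{prop:combomega}(iv), with the repelling case by time-reversal. Your extra care with $k\ge\max\{k_*,1\}$ to ensure $k>0$ is a minor bookkeeping point the paper glosses over, but the substance is identical.
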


\begin{proof}
If $\cU \in \sASet(\cX,\cF)$, then $\bomega(\cU) \subset \cU$. By Proposition \ref{prop:combomega}(i), there exists
$k>0$ such that $\bomega(\cU) = 
\Gamma^+_k(\cU) = \bigcup_{n\ge k} \cF^n(\cU) \subset \cU$, which implies that $\cF^n(\cU) \subset \cU$ for all
$n\ge k$.

Conversely, if there exists $k>0$ such that $\cF^n(\cU) \subset \cU$ for all
$n\ge k$, then Proposition \ref{prop:combomega}(iv) implies that $\bomega(\cU) 
\subset \cU$,
which proves that $\cU\in \sASet(\cX,\cF)$.
\end{proof}

\begin{proposition}
\label{prop:invo}
The mapping $\cU\mapsto \cU^c$ is a lattice anti-isomorphism between $\sASet(\cX,\cF)$ and
$\sRSet(\cX,\cF)$.
\end{proposition}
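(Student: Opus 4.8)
The plan is to deduce everything from the characterization of attracting and repelling sets in terms of forward and backward iterates, Proposition~\ref{prop:char-attset}, together with the elementary duality between $\cF$ and $\cF^{-1}$ under set complementation that was already exploited in the proof of Proposition~\ref{prop:cms-forback}.

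First I would record the basic one-step duality: for any $\cS,\cT \subset \cX$,
\[
\cF(\cS)\subset \cT \quad\Longleftrightarrow\quad \cF^{-1}(\cT^c)\subset \cS^c .
\]
This is immediate from the definition $\cF^{-1}(\xi)=\{\eta : \xi\in\cF(\eta)\}$: the left side says $\eta\in\cS \Rightarrow \cF(\eta)\subset\cT$, while the right side says $\cF(\eta)\cap\cT^c\neq\varnothing \Rightarrow \eta\in\cS^c$, and these are contrapositives of one another. To pass to iterates I would next verify the identity $(\cF^n)^{-1}=(\cF^{-1})^n$, which holds because both sides consist of exactly those $\eta$ joined to $\xi$ by a length-$n$ orbit $\eta=\eta_0,\eta_1,\dots,\eta_n=\xi$ with $\eta_{i+1}\in\cF(\eta_i)$, read forward in one case and backward in the other. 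Applying the one-step duality to $\cF^n$ and using this identity yields the crucial equivalence
\[
\cF^n(\cU)\subset \cU \quad\Longleftrightarrow\quad \cF^{-n}(\cU^c)\subset \cU^c ,
\]
valid for every $n$ and every $\cU\subset\cX$. I expect this iterate-level bookkeeping --- getting $(\cF^n)^{-1}=(\cF^{-1})^n$ right for a genuinely multivalued, possibly non-injective and non-surjective $\cF$ --- to be the only delicate point.

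With the equivalence in hand, Proposition~\ref{prop:char-attset} finishes the bijection: $\cU\in\sASet(\cX,\cF)$ iff there is a threshold $k$ with $\cF^n(\cU)\subset\cU$ for all $n\ge k$, which by the displayed equivalence holds iff $\cF^{-n}(\cU^c)\subset\cU^c$ for all $n\ge k$, i.e.\ iff $\cU^c\in\sRSet(\cX,\cF)$ (after harmonizing the lower index bounds, which affects only the threshold and not its existence). Hence $\cU\mapsto\cU^c$ restricts to a bijection $\sASet(\cX,\cF)\to\sRSet(\cX,\cF)$, and it is its own inverse since complementation is an involution.

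Finally I would check that this bijection reverses the lattice operations. Since $\sASet(\cX,\cF)$ and $\sRSet(\cX,\cF)$ carry the sublattice structure of $\sSet(\cX,\cF)$, with $\vee=\cup$ and $\wedge=\cap$ in both, De Morgan's laws give $(\cU\cup\cU')^c=\cU^c\cap(\cU')^c$ and $(\cU\cap\cU')^c=\cU^c\cup(\cU')^c$, so complementation interchanges $\vee$ and $\wedge$. An order-reversing involutive bijection is precisely a lattice anti-isomorphism, completing the argument; this is exactly the pattern used for $\IS^\pm$ in Proposition~\ref{prop:cms-forback}, now applied to the eventual-iterate characterization rather than to invariance directly.
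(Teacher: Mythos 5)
Your proposal is correct and follows essentially the same route as the paper: both reduce the statement to the eventual-iterate characterization of Proposition~\ref{prop:char-attset} and then transfer $\cF^n(\cU)\subset\cU$ to $\cF^{-n}(\cU^c)\subset\cU^c$ via the complementation duality between $\cF$ and $\cF^{-1}$, with De Morgan's laws handling the anti-isomorphism as in Proposition~\ref{prop:cms-forback}. The only difference is presentational: the paper carries out the transfer element-wise by contradiction (a point $\eta\in\cF^{-n}(\xi)\cap\cU$ with $\xi\in\cU^c$ forces $\xi\in\cF^n(\cU)\subset\cU$), whereas you package the same computation as a clean one-step equivalence $\cF(\cS)\subset\cT \Leftrightarrow \cF^{-1}(\cT^c)\subset\cS^c$ together with the iterate identity $(\cF^n)^{-1}=(\cF^{-1})^n$.
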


\begin{proof}
Let $\cU\in \sASet(\cX,\cF)$. Then by Proposition \ref{prop:char-attset}, there exists $k>0$ such that  $\cF^n(\cU) \subset \cU$ for all $n\ge k$.
As in the proof of Proposition \ref{prop:cms-forback}, assume
$\xi\in \cU^c$.
Suppose that there exists $k>0$ such that  $ \cF^{-n}(\xi)\cap \cU\neq \varnothing$ for all $n\ge k$.
Let $\eta\in\cF^{-n}(\xi)\cap \cU$ for $n\ge k$. Then 
we have
$$
\xi \in \cF^n\bigl( \eta\bigr) \subset \cF^n(\cU) \subset \cU,
$$
which contradicts the fact that $\xi\in \cU^c$. We conclude that there exists $k>0$ such that $\cF^{-n}(\cU^c)
\subset \cU^c$ for all $n\ge k$, and therefore $\cU^c \in \sRSet(\cX,\cF)$.
\end{proof}

\subsection{Attractor-repeller pairs}
\label{subsec:attrep-pairs}
For a multivalued mapping $\cF\colon \cX\mvmap\cX$  one can introduce the notions of dual repeller and dual attractor.

\begin{definition}
\label{defn:dual-cms}
Let  $\cF\colon \cX\mvmap\cX$ be a multivalued mapping. The  \emph{dual repeller}
$\cA^*$ to an attractor $\cA$ is 
defined by  $\cA^* = \balpha(\cA^c)$.
Similarly the {\em dual attractor} to a repeller $\cR$ is $\cR^* = \bomega(\cR^c)$.
The pairs $(\cA,\cA^*)$ and $(\cR^*,\cR)$ are called \emph{attractor-repellers pairs} in $\cX$.
\end{definition}

It follows from Proposition \ref{prop:cms-forback} and Proposition \ref{prop:combomega}(vii) that if $\cA\in\sAtt(\cX,\cF)$, then
$\cA^c \in \IS^-(\cX,\cF)$ and thus $\cA^* = \balpha(\cA^c)\in\sRep(\cX,\cF)$.
Similarly, if $\cR\in\sRep(\cX,\cF)$, then $\cR^* = \bomega(\cR^c)\in \sAtt(\cX,\cF)$.

\begin{proposition}
\label{prop:cms-star}
Let $(\cA,\cA^*)$ 
be a attractor-repeller pair. Then,
$$
\cA = \bomega({\cA^*}^c)\hbox{~and~} \cA^* =  \balpha(\cA^c).
$$
The operator $\cA \mapsto \cA^*$ is a lattice anti-isomorphism from $\sAtt(\cX,\cF)$ to $\sRep(\cX,\cF)$.
\end{proposition}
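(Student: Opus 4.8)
The plan is to reduce everything to the single ``basin'' identity
\[
\balpha(\cA^c) = \setof{\zeta\in\cX \mid \bomega(\zeta)\cap \cA^c \neq \varnothing},
\]
equivalently $(\cA^*)^c = \setof{\zeta\in\cX \mid \bomega(\zeta)\subset \cA}$, from which the remaining claims fall out with only order-theoretic bookkeeping. Note that the second displayed equation in the statement, $\cA^* = \balpha(\cA^c)$, is nothing but Definition~\ref{defn:dual-cms}, so the real content of the first part is the double-dual identity $(\cA^*)^* = \bomega\bigl((\cA^*)^c\bigr) = \cA$. That $\cA^*\in\sRep(\cX,\cF)$ and hence $(\cA^*)^*$ is well defined is already recorded in the text following Definition~\ref{defn:dual-cms}.

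First I would establish the basin identity. Writing $\cR := \cA^* = \balpha(\cA^c)$ and unwinding the definition of $\balpha$, one has $\zeta\in\cR$ precisely when $\zeta\in\cF^{-n}(\cA^c)$, i.e.\ $\cF^n(\zeta)\cap\cA^c\neq\varnothing$, for arbitrarily large $n$. By Proposition~\ref{prop:combomega}(i) the forward images $\cF^n(\zeta)$ stabilize to $\bomega(\zeta)$ for all large $n$, so this condition is equivalent to $\bomega(\zeta)\cap\cA^c\neq\varnothing$; complementing gives $\cR^c = \setof{\zeta\mid \bomega(\zeta)\subset\cA}$. This is the only step carrying genuine dynamical content, and the main point requiring care: one must keep the direction of $\cF^{-n}$ straight, so that membership in $\balpha(\cA^c)$ is correctly translated into a statement about the forward $\omega$-limit of $\zeta$ rather than its backward behaviour, and then invoke the eventual constancy of $\cF^n(\zeta)$. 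Everything after this is formal.

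With the basin identity in hand I would prove $\bomega(\cR^c)=\cA$ by two inclusions. Using Proposition~\ref{prop:combomega}(vi), $\bomega(\cR^c)=\bigcup_{\zeta\in\cR^c}\bomega(\zeta)$, and each $\bomega(\zeta)\subset\cA$ by the definition of $\cR^c$, giving $\bomega(\cR^c)\subset\cA$. Conversely, since $\cA$ is an attractor we have $\cF^n(\cA)=\cA$ for all $n$ and hence $\bomega(\cA)=\cA$; in particular $\bomega(\zeta)\subset\cA$ for every $\zeta\in\cA$, so $\cA\subset\cR^c$, and monotonicity (Proposition~\ref{prop:combomega}(v)) together with $\bomega(\cA)=\cA$ yields $\cA=\bomega(\cA)\subset\bomega(\cR^c)$. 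This proves $\cA = \bomega\bigl((\cA^*)^c\bigr)=(\cA^*)^*$, and the symmetric argument (interchanging $\bomega$ with $\balpha$, $\cF$ with $\cF^{-1}$, and $\sAtt(\cX,\cF)$ with $\sRep(\cX,\cF)$) gives $(\cR^*)^* = \cR$ for every repeller $\cR$.

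Finally I would deduce that $*\colon\sAtt(\cX,\cF)\to\sRep(\cX,\cF)$ is a lattice anti-isomorphism. The two double-dual identities show that $*$ and its repeller counterpart are mutually inverse, so $*$ is a bijection. It is order-reversing: if $\cA\subset\cA'$ then $\cA^c\supset\cA'^c$, whence $\balpha(\cA^c)\supset\balpha(\cA'^c)$ by monotonicity, i.e.\ $\cA^*\supset\cA'^*$; the inverse is order-reversing by the identical argument with $\bomega$. Since the lattice order on both $\sAtt(\cX,\cF)$ and $\sRep(\cX,\cF)$ is inclusion (because $\vee=\cup$ in each), an order-reversing bijection with order-reversing inverse automatically interchanges inclusion-suprema and inclusion-infima, hence sends $\vee$ to $\wedge$ and $\wedge$ to $\vee$; thus $*$ is a lattice anti-isomorphism. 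Alternatively, the join-to-meet identity can be verified directly from the fact that $\balpha\colon\IS^-(\cX,\cF)\to\sRep(\cX,\cF)$ is a lattice epimorphism (Proposition~\ref{att-rep-fbi}) combined with the complementation anti-isomorphism of Proposition~\ref{prop:cms-forback}, but the order-reversing route is shorter and avoids manipulating $\bomega\bigl((\bomega(\cA\cap\cA'))^c\bigr)$.
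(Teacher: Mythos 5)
Your proof is correct, but it takes a genuinely different route from the paper's, and in two places. For the double-dual identity $(\cA^*)^* = \cA$ the paper argues by contradiction: it shows $\cA\subset\cA^{**}$, then for $\xi\in\cA^{**}\setminus\cA$ it combines forward--backward invariance of $\cA^{**}$ with backward invariance of $\cA^c$ and the disjointness $\cA^{**}\cap\cA^*=\varnothing$ to force $\balpha(\xi)=\varnothing$, while the attractor property $\cF(\cA^{**})=\cA^{**}$ supplies an infinite backward chain inside $\cA^{**}$ and hence $\balpha(\xi)\neq\varnothing$. Your basin identity $(\cA^*)^c=\setof{\zeta\in\cX \mid \bomega(\zeta)\subset\cA}$ replaces this with a direct two-inclusion computation using only Proposition~\ref{prop:combomega}(i),(v),(vi); that is arguably cleaner, and it sidesteps the most delicate step of the paper's argument (extracting nonemptiness of $\balpha(\xi)$ from $\cF(\cA^{**})=\cA^{**}$, a point that matters precisely because no injectivity or surjectivity of $f$ is assumed). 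For the anti-isomorphism, the paper computes $(\cA\vee\cA')^*=\cA^*\wedge\cA'^*$ via De Morgan's laws and the lattice epimorphism $\balpha\colon\IS^-(\cX,\cF)\to\sRep(\cX,\cF)$ of Proposition~\ref{att-rep-fbi}, then bootstraps the meet identity from the double dual; your order-theoretic route --- an order-reversing bijection with order-reversing inverse between lattices whose induced order is inclusion (since $\vee=\cup$ on both sides, absorption gives $\cA\le\cA'$ iff $\cA\subset\cA'$) automatically swaps suprema and infima --- is equally valid and shorter, exactly as you note. One wording caution in your key step: Proposition~\ref{prop:combomega}(i) does \emph{not} say that the individual images $\cF^n(\zeta)$ stabilize to $\bomega(\zeta)$; for a $2$-cycle they alternate forever. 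What (i) gives is that the tails $\bigcup_{n\ge k}\cF^n(\zeta)$ equal $\bomega(\zeta)$ for all $k\ge k_*$, and that is all your equivalence needs: $\cF^n(\zeta)\subset\bomega(\zeta)$ for $n\ge k_*$ yields one direction, and the tail description shows each point of $\bomega(\zeta)$ lies in $\cF^n(\zeta)$ for arbitrarily large $n$, yielding the other. Phrase it via the tails and the argument is airtight; the symmetry claim for $(\cR^*)^*=\cR$ under $\cF\leftrightarrow\cF^{-1}$ is legitimate since every ingredient you use is stated in the paper to hold under time reversal.
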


\begin{proof}
The set $\cA^{**}  = \bomega\bigl((\cA^*)^c\bigr)$ is forward-backward invariant, and hence for
every $\xi \in \cA^{**}$ we have $\balpha(\xi)\subset \cA^{**}$ and  $\bomega(\xi)\subset \cA^{**}$.
Moreover, $\cA = (\cA^c)^c  \subset (\cA^*)^c$, and thus
$\cA = \bomega(\cA) \subset \bomega\bigl((\cA^*)^c\bigr) = \cA^{**}$.
Let $\xi \in \cA^{**}\setminus \cA = \cA^{**} \cap \cA^c$. Since $\cA^c$ is backward invariant,
$\balpha(\xi) \subset  \cA^c$, and thus $\balpha(\xi) \in \cA^{**} \setminus \cA$.

Also 
$\balpha(\xi) \subset \balpha(\cA^c) = \cA^*$,
which implies that $\balpha(\xi) \in \cA^{**} \cap \cA^*$. The forward invariance of $(\cA^*)^c$ implies that
$\cA^{**}\cap \cA^* = \varnothing$. We conclude that that $\balpha(\xi) = \varnothing$ for all $\xi\in \cA^{**} \setminus \cA$.
By definition $\cA^{**}$ is an attractor, and therefore $\cF^{-1}(\xi) \cap \cA^{**} \not = \varnothing$ for all $\xi\in \cA^{**}$,
and consequently $\balpha(\xi) \not = \varnothing$ for all $\xi \in \cA^{**}$, a contradiction.
This shows that $\cA^{**} = \cA$. Similar arguments 
also apply to repellers. 
The mapping $\cA\mapsto \cA^*$ is an involution, and the lattices $\sAtt(\cX,\cF)$ to $\sRep(\cX,\cF)$ are isomorphic.

To show that $\cA \mapsto \cA^*$ is a lattice anti-isomorphism we argue as follows.
Let $\cA^* = \balpha(\cA^c)$ and  $\cA'^* = \balpha({\cA'}^c)$, then by Proposition \ref{att-rep-fbi}
and De Morgans' laws
\begin{eqnarray*}
(\cA \vee \cA')^* &=& \balpha\bigl((\cA\vee\cA')^c\bigr) = \balpha\bigl((\cA\cup\cA')^c\bigr)\\
&=& \balpha\bigl(\cA^c\cap {\cA'}^c\bigr) = \balpha(\cA^c)\wedge \balpha({\cA'}^c)\\
&=& \cA^* \wedge \cA'^*.
\end{eqnarray*}
The same holds for $\cA^*$ and $\cA'^*$, i.e. $(\cA^* \vee \cA'^*)^* = \cA^{**} \wedge \cA'^{**}
= \cA \wedge \cA'$.
Observe that 
$$
(\cA \wedge \cA')^* =  (\cA^* \vee \cA'^*)^{**} =  \cA^* \vee \cA'^*,
$$
which proves the proposition.
\end{proof}

Much of the discussion of this section up to this point can be summarized in the following commutative diagram of lattice homomorphisms.
\begin{equation}
\label{eq:diag+-}
\begin{diagram}
\node{\IS^+(\cX,\cF)}\arrow{se,l,V}{\imath}\arrow{sse,l,A}{\bomega} \arrow[3]{e,l,<>}{^c}  \node{}\node{}
\node{\IS^-(\cX,\cF)}\arrow{ssw,r,A}{\balpha}\arrow{sw,l,V}{\imath}\\
\node{}\node{\sASet(\cX,\cF)} \arrow{e,l,<>}{^c} \arrow{s,l,A}{\bomega}\node{\sRSet(\cX,\cF)} \arrow{s,r,A}{\balpha}\\
\node{}\node{\sAtt(\cX,\cF)} \arrow{e,l,<>}{^*}   \node{\sRep(\cX,\cF).} 
\end{diagram}
\end{equation}


\section{From continuous  dynamics  to multivalued mappings}
\label{sec:cont-graph}
In this section we  recall how the dynamics of multivalued mappings can be linked to dynamical systems
as described in \cite{KMV0}.
Since multivalued mappings are discrete in both time and space, we need to address the issues of both time and
space discretization.

\subsection{Grids and outer approximations}
\label{sec:cgt}
To represent  continuous dynamics in terms of the combinatorial structures described in Section~\ref{sec:comb-syst}
requires discretizing phase space.  We wish this discretization to be as generally applicable and as topologically nice as possible. With this in mind we choose the basic elements of our discretization to be {\em regular closed sets}, i.e.
sets $A\subset X$ such that $A =\cl(\Int(A))$.

\begin{proposition}
\label{prop:walker} \cite[Proposition 2.3]{Walker}
Let $X$ be a topological space. The family $\scrR(X)$ of regular closed subsets of $X$ is a  Boolean algebra
with the following operations:
\begin{enumerate}
\item[(i)] $A\leq B$ if and only if $A\subset B$;
\item[(ii)]  $A\vee B := A\cup B$; 
\item[(iii)]  $A\wedge B := \cl ( \Int(A\cap B))$; 
\item[(iv)] $A^\rc := \cl(A^c)$;
\end{enumerate}
where $0=\varnothing$ and $1=X$.
\end{proposition}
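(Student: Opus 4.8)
The plan is to verify that $\scrR(X)$ with the stated operations is a Boolean algebra by checking the defining axioms directly, with the two topological identities $\Int(\cl(\Int(A)))=\Int(A)$ and $\cl(\Int(\cl(A)))=\cl(A)$ as the workhorses. First I would record the basic closure facts: that $A\cup B$ is regular closed whenever $A,B$ are (since $\Int(A\cup B)\supset \Int(A)\cup\Int(B)$ gives $\cl(\Int(A\cup B))\supset A\cup B$, and the reverse inclusion is automatic), that $\cl(\Int(A\cap B))$ is regular closed (apply $\cl\,\Int$ once more and use the idempotency identity), and that $\cl(A^c)$ is regular closed whenever $A$ is (here one uses that $A^c$ is open when... more carefully, $\Int(\cl(A^c))=\Int((\Int A)^c)=(\cl\,\Int\,A)^c{}^{\circ}$-type manipulations, so that $\cl(\Int(\cl(A^c)))=\cl(A^c)$). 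These guarantee the operations land back in $\scrR(X)$.

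Next I would establish that $(\scrR(X),\vee,\wedge)$ is a lattice with the given order. The join $A\vee B=A\cup B$ is the least upper bound in the subset order restricted to regular closed sets, which is immediate. For the meet, I would show $\cl(\Int(A\cap B))$ is the greatest regular closed set contained in both $A$ and $B$: it is contained in $A$ because $\Int(A\cap B)\subset A$ forces $\cl(\Int(A\cap B))\subset \cl(\Int(A))=A$ by regularity of $A$, and similarly for $B$; and if a regular closed $C\subset A\cap B$, then $C=\cl(\Int C)\subset \cl(\Int(A\cap B))$. The absorption and associativity laws then follow from these universal characterizations, and distributivity is the step I expect to be the main obstacle, since $\wedge$ involves the nontrivial $\cl\,\Int$ operator rather than plain intersection. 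I would prove distributivity by the standard lattice-theoretic route of checking one inequality (say $A\wedge(B\vee C)\le (A\wedge B)\vee(A\wedge C)$, the reverse being free in any lattice), reducing it via the regularity identities to an inclusion about interiors and closures of $A\cap(B\cup C)$ versus $(A\cap B)\cup(A\cap C)$, where the set-theoretic equality $A\cap(B\cup C)=(A\cap B)\cup(A\cap C)$ does the essential work once the closure operators are pushed through using $\Int(U\cup V)\supset\Int U\cup\Int V$.

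Finally I would verify the Boolean-algebra structure, namely that $A^{\rc}=\cl(A^c)$ is a complement: that $A\vee A^{\rc}=A\cup\cl(A^c)=X$ (since $A^c\subset\cl(A^c)$ and $A\cup A^c=X$), and that $A\wedge A^{\rc}=\cl(\Int(A\cap\cl(A^c)))=\varnothing$, the latter because $\Int(A\cap\cl(A^c))\subset \Int(A)\cap\Int(\cl(A^c))=\Int(A)\cap(\cl(\Int A))^c{}$-type reasoning forces an empty open set by regularity of $A$. The identities $0=\varnothing$ and $1=X$ are clearly the bottom and top, and complementation together with the already-proved distributivity upgrades the distributive lattice to a Boolean algebra. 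The whole argument is essentially a bookkeeping exercise in the two idempotency identities for $\cl\,\Int$ and $\Int\,\cl$, so in the write-up I would cite \cite{Walker} for the nontrivial point-set topology and present only the lattice-theoretic skeleton, flagging distributivity as the one place where care is genuinely required.
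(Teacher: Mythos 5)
The paper gives no proof of this proposition at all---it is quoted verbatim from Walker with a citation---so your attempt stands or falls on its own. In outline most of it is sound: closure of $\scrR(X)$ under $\vee$ and $\wedge$, the universal characterizations (join as least upper bound, $\cl(\Int(A\cap B))$ as greatest regular closed lower bound), and the complement laws are all handled correctly. Your verification that $\cl(A^c)\in\scrR(X)$ trails off mid-manipulation, but the clean route is nearby and is the one your complement computation already uses: $A$ is closed, so $A^c$ is open, and the closure of any open set $U$ is regular closed since $U\subset\Int(\cl(U))$ gives $\cl(U)\subset\cl(\Int(\cl(U)))\subset\cl(U)$; likewise $\Int(\cl(A^c))=\Int((\Int A)^c)=(\cl(\Int A))^c=A^c$ makes $A\wedge A^{\rc}=\varnothing$ rigorous.

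The genuine gap is in distributivity---ironically the one step you flag as the place where care is required. You correctly isolate the nontrivial inequality $A\wedge(B\vee C)\le(A\wedge B)\vee(A\wedge C)$, but the only tool you invoke, $\Int(U\cup V)\supset\Int(U)\cup\Int(V)$, proves exactly the \emph{opposite} inequality, the one that is free in any lattice. Writing $P=A\cap B$ and $Q=A\cap C$, the set-theoretic distributive law reduces your claim to $\cl(\Int(P\cup Q))\subset\cl(\Int(P)\cup\Int(Q))$, i.e.\ to the inclusion $\Int(P\cup Q)\subset\cl(\Int(P)\cup\Int(Q))$, and no amount of monotone pushing of $\cl$ and $\Int$ through the identity $A\cap(B\cup C)=(A\cap B)\cup(A\cap C)$ can deliver it, because it is false for arbitrary sets: with $P=\mathbb{Q}$ and $Q=\R\setminus\mathbb{Q}$ one has $\Int(P\cup Q)=\R$ while $\cl(\Int(P)\cup\Int(Q))=\varnothing$. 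What saves the argument is that $P$ and $Q$ here are \emph{closed}, and for closed sets the inclusion holds by a genuinely pointwise argument that must appear explicitly: if $W$ is a nonempty open set with $W\subset P\cup Q$, then $W\setminus P$ is open (closedness of $P$) and contained in $Q$, so either $W\setminus P\neq\varnothing$, in which case $W$ meets $\Int(Q)$, or $W\subset P$, in which case $W\subset\Int(P)$; hence $\Int(P)\cup\Int(Q)$ is dense in $\Int(P\cup Q)$, and taking closures finishes distributivity. This density lemma for closed sets (or its regular-open dual) is the actual mathematical content behind Walker's proposition; as written, your sketch asserts the hard inclusion with a tool that points the wrong way.
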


\begin{remark}
Proposition 2.3 in \cite{Walker} proves that $\scrR(X)$ is a complete Boolean algebra, i.e. 
$\bigvee_\alpha A_\alpha := \cl\left( \bigcup_\alpha \Int(A_\alpha) \right)$ and $\bigwedge_\alpha A_\alpha := \cl\left( \Int(\bigcap_\alpha A_\alpha) \right)$ are well-defined.
\end{remark}

\begin{lemma}
\label{lem:equivregcl}
Let $A,A' \in \scrR(X)$, then $A\wedge A' = \varnothing$ if and only if $A\cap \Int (A') = \varnothing$.
\end{lemma}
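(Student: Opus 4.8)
The plan is to prove both directions of the biconditional directly from the definitions in Proposition~\ref{prop:walker}, recalling that $A \wedge A' = \cl(\Int(A \cap A'))$ for regular closed sets. The key observation driving both implications is that for a regular closed set $A' = \cl(\Int(A'))$, the interior $\Int(A')$ is a dense open subset of $A'$, and taking closures commutes suitably with the relevant set operations.

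First I would prove the contrapositive of the reverse direction, or equivalently argue directly. Suppose $A \wedge A' = \cl(\Int(A \cap A')) = \varnothing$. Since the closure of a set is empty only if the set itself is empty, this forces $\Int(A \cap A') = \varnothing$. Now I want to deduce $A \cap \Int(A') = \varnothing$. The set $A \cap \Int(A')$ is contained in $A \cap A'$, and I would show it is contained in $\Int(A \cap A')$ by exploiting that $A$ is regular closed: a point $x \in A \cap \Int(A')$ lies in $\Int(A')$, and since $A = \cl(\Int A)$, every neighborhood of $x$ meets $\Int(A)$; combining $\Int(A')$ (an open set containing $x$) with $\Int(A)$ should locate points of $\Int(A) \cap \Int(A') \subset \Int(A \cap A')$ arbitrarily near $x$. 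Hence $A \cap \Int(A') \subset \cl(\Int(A \cap A')) = A \wedge A' = \varnothing$.

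For the forward direction, suppose $A \cap \Int(A') = \varnothing$. Since $\Int(A') \subset A'$, I want to show $\Int(A \cap A') = \varnothing$, whence its closure $A \wedge A'$ is empty. Here the natural approach is: $\Int(A \cap A') \subset \Int(A') \cap \Int(A) \subset \Int(A') \subset A'$, and also $\Int(A \cap A') \subset A$, so $\Int(A \cap A') \subset A \cap A'$. An interior point of $A \cap A'$ has an open neighborhood inside $A \cap A'$, in particular inside $A'$, so such a point lies in $A \cap \Int(A')$, which is empty by hypothesis. Therefore $\Int(A \cap A') = \varnothing$ and $A \wedge A' = \varnothing$.

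I expect the main obstacle to be the reverse direction, specifically the density argument showing $A \cap \Int(A') \subset \Int(A \cap A')$: one must carefully use that $A$ is regular closed (not merely closed) so that interior points of $A'$ lying in $A$ are approximated by points of $\Int(A)$, placing them in the closure of $\Int(A) \cap \Int(A')$. The forward direction is essentially a routine unwinding of the definition of interior together with $\Int(A') \subset A'$, and should present no real difficulty. I would take care to state explicitly where regular closedness of $A$ is used, since for a general closed set $A$ the reverse implication can fail.
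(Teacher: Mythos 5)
Your proof is correct and rests on essentially the same idea as the paper's: your density argument showing $A\cap\Int(A')\subset\cl\bigl(\Int(A)\cap\Int(A')\bigr)$ is exactly the pointwise unpacking of the paper's auxiliary fact that $\cl(B\cap U)=\cl(B'\cap U)$ for open $U$ when $\cl(B)=\cl(B')$, applied with $U=\Int(A')$, $B=\Int(A)$, $B'=A$. The only difference is one of packaging: the paper derives the sharper identity $A\wedge A'=\cl\bigl(A\cap\Int(A')\bigr)$ and reads off the equivalence, whereas you prove the two implications separately, with your easy direction (which, as you correctly note, does not use regular closedness of $A$) handled by a routine interior-point argument.
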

\begin{proof}
By definition
\[
 A \wedge A' = \cl(\Int(A\cap A')).
\]
Using the property $\Int(A\cap A') = \Int(A)\cap \Int(A')$, 
\[
 A \wedge A' =  \cl(\Int(A)\cap \Int(A')).
\]
Also, if $U\subset X$ is open and $B,B'\subset X$ with $\cl(B) = \cl(B')$, then 
$\cl(B\cap U) = \cl(B'\cap U)$. 
Taking $U= \Int(A')$, $B = \Int(A),$ and $B'= A$ implies
\[
 A \wedge A' = \cl(A\cap \Int(A')).
\]
Therefore
\[
A \wedge A'=\varnothing \quad\hbox{iff}\quad  \cl(A\cap \Int(A'))=\varnothing
\quad\hbox{iff}\quad A\cap \Int(A')=\varnothing,
\]
which  proves the equivalence.
\end{proof}


Sets $A,A'\subset X$ for which $A\wedge A' = \varnothing$ will be  referred to as \emph{regularly disjoint sets}.

\begin{lemma}
\label{lem:intregdisj}
Let $A,B,C \in \scrR(X)$ be mutually regularly disjoint sets. Then
\[
A = \cl\bigl( (A\cup B)\setminus (B\cup C)\bigr).
\]
\end{lemma}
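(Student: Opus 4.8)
The plan is to first collapse the right-hand side by pure set algebra, and then to establish the two inclusions separately, the nontrivial one resting on the meaning of regular disjointness supplied by Lemma~\ref{lem:equivregcl}.

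First I would rewrite the set difference. Since $(A\cup B)\cap B^c = A\cap B^c$, we have $(A\cup B)\setminus(B\cup C) = (A\cup B)\cap B^c\cap C^c = A\cap B^c\cap C^c = A\setminus(B\cup C)$, so the claim is equivalent to $A = \cl\bigl(A\setminus(B\cup C)\bigr)$. The inclusion $\cl\bigl(A\setminus(B\cup C)\bigr)\subset A$ is then immediate: $A\setminus(B\cup C)\subset A$ and $A$ is closed, so its closure is contained in $\cl(A)=A$.

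For the reverse inclusion I would exploit regular disjointness through Lemma~\ref{lem:equivregcl}. Because the meet $\wedge$ on $\scrR(X)$ is commutative, $A\wedge B=\varnothing$ is the same as $B\wedge A=\varnothing$, which by that lemma gives $B\cap\Int(A)=\varnothing$; likewise $A\wedge C=\varnothing$ yields $C\cap\Int(A)=\varnothing$. Hence $\Int(A)\cap(B\cup C)=\varnothing$, i.e. $\Int(A)\subset A\setminus(B\cup C)$. Taking closures and using that $A$ is regular closed, $A=\cl(\Int(A))\subset\cl\bigl(A\setminus(B\cup C)\bigr)$, which completes the argument.

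The only real subtlety — and the step I expect to be the main obstacle — is extracting the correct consequence of regular disjointness: one must read off $\Int(A)\cap B=\varnothing$ (rather than merely $A\cap\Int(B)=\varnothing$) from $A\wedge B=\varnothing$, which is legitimate precisely because $\wedge$ is symmetric, so Lemma~\ref{lem:equivregcl} may be applied with the roles of the two sets exchanged. I would also remark that the hypothesis is in fact stronger than needed: only regular disjointness of $A$ from $B$ and of $A$ from $C$ enters the argument, while $B\wedge C=\varnothing$ plays no role.
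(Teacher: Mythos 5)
Your proof is correct and follows essentially the same route as the paper: reduce $(A\cup B)\setminus(B\cup C)$ to $A\setminus(B\cup C)$ by set algebra, then use Lemma~\ref{lem:equivregcl} to get $\Int(A)\subset A\setminus(B\cup C)$ and conclude via $A=\cl(\Int(A))$. The only cosmetic difference is that the paper forms the single set $B\cup C$ and uses $A\wedge(B\cup C)=\varnothing$, whereas you treat $B$ and $C$ separately before taking the union; your closing remark that $B\wedge C=\varnothing$ is never used is also consistent with the paper's argument.
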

\begin{proof}
We start with the observation that if $A,A'\in \scrR(X)$ are mutually regularly disjoint, then $\cl(A\setminus A') = A$. Indeed, by Lemma \ref{lem:equivregcl}, $A\wedge A'=\varnothing$, is equivalent to $A\cap \Int(A')
=\Int(A) \cap A' = \varnothing$. This implies $\Int(A) \subset A\setminus A' \subset A$ and therefore
$A= \cl(\Int(A)) \subset \cl(A\setminus A') \subset \cl(A) = A$, which proves the statement.

Note that $(A\cup B)\setminus (B\cup C) = (A\cup B) \setminus B\setminus C = A\setminus B\setminus C
= A\setminus (B\cup C)$.
By assumption $A\wedge (B\cup C) = \varnothing$. By the previous statement we then have
\[
A = \cl (A\wedge (B\cup C)) =  \cl\bigl( (A\cup B)\setminus (B\cup C)\bigr),
\]
which proves the lemma.
\end{proof}

For the purpose of computation we are only interested in finite collections of regular closed sets.


\begin{proposition}
\label{prop:grid-sub}
Let   $\scrR_0\subset \scrR(X)$ be a finite subalgebra of the Boolean algebra $\scrR(X)$ of regular closed subsets of $X$ and let $\sJ(\scrR_0)$ denote the set of atoms of $\scrR_0$.  Then
\begin{enumerate}
\item[(i)] $X=\bigcup\setof{A\mid A \in \sJ(\scrR_0)}$.
\item[(ii)]
If $A,A'\subset X$   are atoms of $\scrR_0$, then
$
A\cap \Int(A') = \varnothing.
$
\end{enumerate}
Conversely, every finite set $\sJ= \{A\mid A\subset X\}$ of mutually regularly disjoint  subsets which satisfies (i)  
generates a subalgebra of $\scrR(X)$ for which $\sJ$ is the set of atoms.
\end{proposition}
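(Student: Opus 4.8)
The plan is to handle the two directions separately: the forward statements (i)--(ii) reduce to the abstract structure theory of finite Boolean algebras together with Lemma~\ref{lem:equivregcl}, while the converse is proved by giving an explicit description of the subalgebra generated by $\sJ$. For (i) and (ii), note that $\scrR_0$ is a finite Boolean algebra under the operations of Proposition~\ref{prop:walker}, hence atomic, and the standard facts about such algebras give that the top element is the join of all atoms and that distinct atoms meet to $0$. Since $1=X$ and the join in $\scrR(X)$ is set union, a finite join of atoms is their union, so $X=\bigvee_{A\in\sJ(\scrR_0)}A=\bigcup_{A\in\sJ(\scrR_0)}A$, which is (i). For (ii), distinct atoms $A\neq A'$ satisfy $A\wedge A'=0=\varnothing$, and Lemma~\ref{lem:equivregcl} translates this into $A\cap\Int(A')=\varnothing$; the case $A=A'$ is excluded since then $A\cap\Int(A')=\Int(A)\neq\varnothing$.

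For the converse, let $\sJ$ be a finite family of nonempty, mutually regularly disjoint regular closed sets satisfying (i), and let $\scrR_0$ be the subalgebra of $\scrR(X)$ it generates. First I would establish the explicit description
\[
\scrR_0=\setdef{\bigcup_{A\in\cS}A}{\cS\subseteq\sJ}.
\]
Because a finite union of regular closed sets is regular closed, each such union lies in $\scrR(X)$, and the family visibly contains $\sJ$, so it suffices to check closure under $\vee,\wedge,{}^\rc$. Closure under $\vee=\cup$ is immediate. For $\wedge$, distributivity together with $A\wedge A=A$ and $A\wedge B=\varnothing$ for distinct $A,B\in\sJ$ (mutual regular disjointness) gives $\left(\bigcup_{A\in\cS}A\right)\wedge\left(\bigcup_{B\in\cT}B\right)=\bigcup_{A\in\cS\cap\cT}A$. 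For complementation, put $U=\bigcup_{A\in\cS}A$ and $V=\bigcup_{B\in\sJ\setminus\cS}B$; then $U\vee V=X$ by (i) and $U\wedge V=\varnothing$ by the same distributive computation, so $V=U^\rc$ by uniqueness of Boolean complements. Thus the family is a subalgebra containing $\sJ$; since $\scrR_0$ contains $\sJ$ and is closed under $\vee$, it contains every such union, and the two coincide.

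Finally I would identify the atoms. Each $A\in\sJ$ is a nonzero element; to see it is minimal, suppose $\varnothing\neq U=\bigcup_{B\in\cT}B\subseteq A$. Any $B\in\cT$ with $B\neq A$ would satisfy $B\subseteq A$ and, by regular disjointness and Lemma~\ref{lem:equivregcl}, $B\cap\Int(A)=\varnothing$, forcing $B\subseteq A\setminus\Int(A)$; as this boundary set is nowhere dense and $B$ is regular closed, $B=\cl(\Int(B))=\varnothing$, a contradiction. Hence $\cT\subseteq\{A\}$ and $U=A$, so $A$ is an atom; conversely every nonzero $\bigcup_{A\in\cS}A$ dominates some $A\in\cS\subseteq\sJ$, so every atom lies in $\sJ$. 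Therefore $\sJ$ is exactly the set of atoms. I expect the main obstacle to be the verification that the union-family is closed under the nonstandard complement ${}^\rc=\cl((\cdot)^c)$, where the regular-closed operations interact awkwardly with set complementation; this is why the uniqueness-of-complement argument, rather than a direct manipulation of $\cl(U^c)$, is the cleanest route. The minimality step, which rests on the nowhere-density of boundaries of regular closed sets, is the other place where care is required.
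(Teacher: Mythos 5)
Your proof is correct, and for parts (i) and (ii) it follows essentially the paper's own (very terse) argument: (i) comes from $1=X$, $\vee=\cup$, and the atomicity of a finite Boolean algebra, while (ii) is exactly the translation of $A\wedge A'=\varnothing$ for distinct atoms through Lemma~\ref{lem:equivregcl}; your observation that (ii) must be read for \emph{distinct} atoms (since $A\cap\Int(A)=\Int(A)\neq\varnothing$) is a fair and correct reading of the statement as written. Where you genuinely diverge is the converse: the paper dispatches it in one line by citing Stone's Representation Theorem, whereas you construct the generated subalgebra explicitly as $\setdef{\bigcup_{A\in\cS}A}{\cS\subseteq\sJ}$, check closure under $\wedge$ via distributivity and mutual regular disjointness, handle the awkward complement ${}^\rc=\cl((\cdot)^c)$ by uniqueness of complements in the Boolean algebra $\scrR(X)$ rather than by direct manipulation, and prove minimality of each $A\in\sJ$ using the nowhere-density of $A\setminus\Int(A)$ together with regularity of the elements of $\sJ$. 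Your route is more elementary and self-contained, and it supplies precisely the verifications the citation hides: in effect you show that $\cS\mapsto\bigcup_{A\in\cS}A$ is a Boolean embedding of $\sSet(\sJ)$ into $\scrR(X)$ (your boundary argument is what prevents one generator from being absorbed by the others, i.e.\ it gives injectivity), and you make explicit the hypotheses the paper leaves implicit, namely that the elements of $\sJ$ be nonempty regular closed sets if $\sJ$ is to be exactly the atom set. What the paper's appeal to Stone buys is brevity and the conceptual point that any such decomposition realizes a power-set algebra inside $\scrR(X)$; your computation exhibits that isomorphism concretely, which is arguably more useful given that the paper later works with exactly this identification through the evaluation map.
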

\begin{proof}
The proof of (i) follows from the fact that $1=X$ and Proposition~\ref{prop:walker}(ii). Property (ii) follows from Lemma \ref{lem:equivregcl}. The converse statement follows from Stone's Representation Theorem, cf.\ \cite{Davey:2002p5923}.
\end{proof}

Proposition~\ref{prop:grid-sub} implies that if $X$ is a compact metric space, then any finite subalgebra of $\scrR(X)$ defines a {\em grid} on $X$ (see \cite{KMV0}, \cite{Mrozek-grid}), and conversely a grid defines a finite subalgebra of $\scrR(X)$.  
We denote the space of a grids on $X$ by $\sGrid(X)$, which is a lattice dual to the lattice of finite subalgebras
$\subFR(X)$.
Since we make use of grids to pass from the computations to dynamics, we recall and establish several fundamental properties.
First, by \cite[Theorem 2.2]{KMV0} given a compact metric space there exists a grid with elements of arbitrarily small diameter. 
Second, as is discussed in detail in this section, grids provide a natural correspondence between the combinatorial systems of Section~\ref{sec:comb-syst} and the continuous systems of interest.

To begin to set up the relationship between combinatorial and continuous systems, consider a grid  on $X$ indexed by a finite set $\cX$.  
In particular, given $\xi\in\cX$  the corresponding grid element is denoted by $\supp{\xi}\in \scrR(X)$.  
The \emph{evaluation mapping} $\supp{\cdot}\colon \sSet(\cX) \to \scrR(X)$ is defined by
\[ 
 |\cU| := \bigcup_{\xi\in \cU} \supp{\xi}.
 \]
 The range of $\sSet(\cX)$ under $|\cdot|$ is the subalgebra whose atoms are
 the grid elements, and this subalgebra will be denoted by $\scrR_{{\cX}}(X)$.
 Proposition~\ref{prop:grid-sub} immediately implies the following.
\begin{corollary}
\label{cor:grid->lattice}\label{prop:grid->lattice}
Given a grid on a compact metric space $X$ indexed by $\cX$, then the evaluation mapping $\supp{\cdot}\colon \sSet(\cX) \to \scrR(X)$ is a 
Boolean isomorphism onto $\scrR_{{\cX}}(X)$.
\end{corollary}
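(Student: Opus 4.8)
The plan is to verify that the evaluation mapping $\supp{\cdot}\colon \sSet(\cX) \to \scrR_{\cX}(X)$ is a bijective lattice homomorphism between Boolean algebras, and then invoke the fact that a bijective Boolean homomorphism is automatically a Boolean isomorphism (it preserves complements, and the inverse is automatically a homomorphism). The essential content is really the interplay between the \emph{set-theoretic} lattice operations on $\sSet(\cX)$ and the \emph{regular-closed} operations $(\vee,\wedge,{}^\rc)$ on $\scrR(X)$ described in Proposition~\ref{prop:walker}. Since by definition $\supp{\cdot}$ sends a subset $\cU \subset \cX$ to the union of its grid elements, the target is by construction exactly $\scrR_\cX(X)$, so surjectivity is immediate.

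First I would establish that $\supp{\cdot}$ is injective. The grid elements $\supp{\xi}$ are the atoms of the finite subalgebra $\scrR_\cX(X)$, and by Proposition~\ref{prop:grid-sub}(ii) distinct atoms $\supp{\xi}, \supp{\eta}$ are regularly disjoint, meaning $\supp{\xi}\cap\Int(\supp{\eta}) = \varnothing$ and $\Int(\supp{\xi})\cap\supp{\eta}=\varnothing$. From this one extracts that $\Int(\supp{\xi})$ is not contained in any union $|\cU|$ unless $\xi \in \cU$. Concretely, if $\xi\notin\cU$ then $\Int(\supp{\xi})$ is disjoint from every $\supp{\eta}$ with $\eta\in\cU$, hence disjoint from $|\cU|$; since $\Int(\supp{\xi})\neq\varnothing$ (as $\supp{\xi}$ is a nonempty regular closed set), this shows $\supp{\xi}\not\subset|\cU|$. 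Consequently $\cU$ is recovered from $|\cU|$ as $\setof{\xi \mid \Int(\supp{\xi})\subset |\cU|}$, giving injectivity. I would phrase this carefully using regular disjointness rather than plain set disjointness, since the $\supp{\xi}$ may share boundary points.

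Next I would check that $\supp{\cdot}$ preserves the join and meet. Preservation of $\vee$ is essentially trivial: $|\cU\cup\cU'| = |\cU|\cup|\cU'| = |\cU|\vee|\cU'|$, directly from the definition of the evaluation mapping and the fact that $\vee=\cup$ in $\scrR(X)$. Preservation of $\wedge$ is the step I expect to require the most care. One must show $|\cU\cap\cU'| = \cl(\Int(|\cU|\cap|\cU'|))$. The inclusion of $|\cU\cap\cU'|$ in the right side is clear since $|\cU\cap\cU'|\subset|\cU|\cap|\cU'|$ and $|\cU\cap\cU'|$ is regular closed. The reverse inclusion is where the regular disjointness of atoms does the work: a point in $\Int(|\cU|\cap|\cU'|)$ cannot lie in the interior of a grid element indexed outside $\cU\cap\cU'$, because grid elements indexed by $\cU\setminus\cU'$ meet those indexed by $\cU'\setminus\cU$ only in their (measure-zero, interior-free) overlaps, and Lemma~\ref{lem:equivregcl} converts the regular-disjointness hypothesis into the needed emptiness of interior intersections. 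This is the main obstacle, and I would isolate it as the crux of the argument.

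Finally, having established that $\supp{\cdot}$ is a bijective lattice homomorphism between the finite distributive lattices $\sSet(\cX)$ and $\scrR_\cX(X)$, and noting that both are Boolean algebras with top $X$ (by Proposition~\ref{prop:grid-sub}(i), $|\cX|=X$) and bottom $\varnothing$, I would conclude that it is a Boolean isomorphism: a bijective homomorphism of bounded distributive lattices has a homomorphism inverse, and since complements are unique and preserved under lattice isomorphisms of Boolean algebras, $\supp{\cdot}$ automatically intertwines $^c$ with $^\rc$. This yields the claimed Boolean isomorphism onto $\scrR_\cX(X)$, completing the proof.
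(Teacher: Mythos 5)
Your proposal is correct, and it ends with the same final step as the paper (a bounded lattice homomorphism between Boolean algebras sending $0\mapsto 0$ and $1\mapsto 1$ automatically preserves complements; the paper cites Lemma 4.17 of Davey--Priestley for precisely this), but the body of your argument takes a genuinely different route. The paper's proof is two sentences long: it declares that $\supp{\cdot}$ is a lattice homomorphism ``by construction,'' having already absorbed the substance into Proposition~\ref{prop:grid-sub}, whose converse (via Stone's Representation Theorem) establishes that the mutually regularly disjoint grid elements are the atoms of a finite Boolean subalgebra of $\scrR(X)$; once that is in place, injectivity and $\wedge$-preservation are pure atom combinatorics in a finite Boolean algebra, namely $\bigvee_{\xi\in\cU}\supp{\xi}\wedge\bigvee_{\xi\in\cU'}\supp{\xi}=\bigvee_{\xi\in\cU\cap\cU'}\supp{\xi}$ by distributivity and pairwise disjointness of atoms. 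You instead re-prove these facts topologically, which is more self-contained and makes visible exactly where \emph{regular} disjointness (as opposed to genuine disjointness) is needed --- instructive, since grid elements may share boundary points. Your identification of $\wedge$-preservation as the crux, and of Lemma~\ref{lem:equivregcl} as the right tool, is accurate; your injectivity argument (recovering $\cU$ from $\supp{\cU}$ via $\Int\supp{\xi}\neq\varnothing$ for nonempty regular closed atoms) is sound. Two small points of looseness in the crux, though: ``measure-zero'' is not meaningful here (there is no measure in play; ``empty interior'' is the property you need), and your phrasing only addresses points lying in interiors of grid elements, while the genuine difficulty is points on boundaries. The clean repair: set $D=\supp{\cU\setminus\cU'}\cap\supp{\cU'\setminus\cU}$; by distributivity of $\scrR(X)$ and pairwise regular disjointness of the atoms, $\supp{\cU\setminus\cU'}\wedge\supp{\cU'\setminus\cU}=\varnothing$, i.e.\ $\Int(D)=\varnothing$; since $\supp{\cU}\cap\supp{\cU'}=\supp{\cU\cap\cU'}\cup D$ with $\supp{\cU\cap\cU'}$ closed, any $x\in\Int\bigl(\supp{\cU}\cap\supp{\cU'}\bigr)$ outside $\supp{\cU\cap\cU'}$ would have a whole neighborhood inside $D$, a contradiction; hence $\cl\bigl(\Int(\supp{\cU}\cap\supp{\cU'})\bigr)\subset\supp{\cU\cap\cU'}$, and the reverse inclusion is, as you note, immediate because $\supp{\cU\cap\cU'}$ is regular closed. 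With that patch your argument is complete. What each approach buys: the paper's is shorter and quarantines the topology inside Proposition~\ref{prop:grid-sub}, while yours trades brevity for an explicit demonstration of how the Boolean structure of $\scrR_{\cX}(X)$ emerges from the pointwise topology of the grid.
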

\begin{proof}
By construction the evaluation map is a lattice homomorphism.
Since $|\varnothing| = \varnothing$ and $|\cX| = X$, Lemma 4.17 in  \cite{Davey:2002p5923}  shows that the evaluation map is Boolean.
\end{proof}

Before explicitly describing the discretization of a general dynamical system $\varphi\colon \T^+ \times X\to X$, we consider the simple setting of approximating the dynamics generated by a continuous map $f\colon X\to X$.

\begin{definition}
\label{defn:outerapprox}
Let $f\colon X \to X$ be a continuous map. Let $\cX$ be the indexing set for a grid on $X$.
A multivalued mapping $\cF\colon\cX \mvmap \cX$ is an \emph{outer approximation} of $f$ if
\begin{equation}
\label{eq:outerapprox}
 f(\supp{\xi}) \subset \Int |\cF(\xi)| \text{ for all }\xi\in \cX.
\end{equation}
A multivalued mapping $\cF\colon\cX \mvmap \cX$ is a \emph{weak outer approximation} for 
 $f$ if
\begin{equation}
\label{eq:weakouterapprox}
f(\supp{\xi}) \subset \Int \supp{\bigcup_{n\geq 0}\cF^n(\xi)}\text{ for all }\xi\in \cX.
\end{equation}
\end{definition}


\begin{remark}
By  definition  outer approximations $\cF$  are necessarily left-total, and therefore combinatorial omega limit sets and attractors are invariant sets for outer approximations $\cF$.
\end{remark}

\subsection{Attractors, repellers, and their neighborhoods}
\label{subsec:at-rep-neigh}

Recall that a set $U\subset X$ is an \emph{attracting neighborhood} for a continuous function $f\colon X \to X$ if
$\omega(U,f)\subset \Int(U)$.  A \emph{trapping region} $U$ is an attracting neighborhood with the additional 
property that $f(\cl(U)) \subset \Int(U)$.  A set $A\subset X$ is an \emph{attractor}  if there exists a 
trapping region $U$  such that $A=\Inv (U,f)$ in which case $A = \omega(U,f)\subset \Int(U)$.

A set $U\subset X$ is a \emph{repelling neighborhood} for a continuous function $f\colon X \to X$ if
$\alpha(U,f)\subset \Int(U)$.  A \emph{repelling region} $U$ is an repelling neighborhood with the additional 
property that $f^{-1}(\cl(U)) \subset \Int(U)$.  A set $R\subset X$ is an \emph{repeller}  if there exists a 
repelling region $U$  such that $R=\Inv^+ (U,f)$ in which case $R = \alpha(U,f)\subset \Int(U)$ cf.\ \cite{KMV-1a}.

The sets of all attracting neighborhoods and repelling neighborhoods are denoted by $\sANbhd(X,f)$ and 
$\sRNbhd(X,f)$ respectively. As is shown in \cite{KMV-1a} these sets are lattices under the operations union and intersection.
The following propositions indicate that attracting and repelling neighborhoods can be identified using weak outer
approximations.

\begin{proposition}
\label{prop:discr-att-to-varphi1}
Let $\cF\colon \cX \mvmap \cX$ be an weak outer approximation for $f$. If $\cU\subset \sInvset^+(\cX,\cF)$, then $|\cU|$ is  a trapping region for $f$, and therefore $|\cU|\in\sANbhd(X,f)$. 
\end{proposition}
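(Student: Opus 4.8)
The plan is to verify directly the defining inclusion of a trapping region. Since $|\cU|=\bigcup_{\xi\in\cU}\supp{\xi}$ is a finite union of grid elements it lies in $\scrR_{\cX}(X)\subset\scrR(X)$ and is in particular regular closed, so $\cl(|\cU|)=|\cU|$. Thus the strong trapping condition $f(\cl(|\cU|))\subset\Int(|\cU|)$ collapses to showing $f(|\cU|)\subset\Int(|\cU|)$. I would establish this inclusion one grid element at a time and then take the union over $\xi\in\cU$.

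The key combinatorial observation is that forward invariance of $\cU$ controls the entire forward orbit that appears in the weak outer approximation estimate. Indeed, $\cF(\cU)\subset\cU$ gives, by an immediate induction, $\cF^n(\xi)\subset\cU$ for every $\xi\in\cU$ and every $n\ge 0$ (with $\cF^0(\xi)=\{\xi\}$), so that $\bigcup_{n\ge 0}\cF^n(\xi)\subset\cU$. Because the evaluation map is order preserving (Corollary~\ref{cor:grid->lattice}, or directly from its definition as a union), this yields $\supp{\bigcup_{n\ge 0}\cF^n(\xi)}\subset\supp{\cU}=|\cU|$, and hence $\Int\supp{\bigcup_{n\ge 0}\cF^n(\xi)}\subset\Int(|\cU|)$.

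It then remains to feed this into the weak outer approximation inequality \eqref{eq:weakouterapprox}. For each $\xi\in\cU$ it gives $f(\supp{\xi})\subset\Int\supp{\bigcup_{n\ge 0}\cF^n(\xi)}\subset\Int(|\cU|)$, and taking the union over $\xi\in\cU$ together with $f\bigl(\bigcup_{\xi\in\cU}\supp{\xi}\bigr)=\bigcup_{\xi\in\cU}f(\supp{\xi})$ produces $f(|\cU|)\subset\Int(|\cU|)$; that is, $|\cU|$ is a trapping region. The final assertion $|\cU|\in\sANbhd(X,f)$ then follows because every trapping region is an attracting neighborhood, a fact recalled at the start of this subsection (and established in \cite{KMV-1a}): from $f(\cl(U))\subset\Int(U)$ one sees the images $f^n(U)$ are nested and $\cl(f(U))\subset f(\cl(U))\subset\Int(U)$, whence $\omega(U,f)=\bigcap_n\cl(f^n(U))\subset\cl(f(U))\subset\Int(U)$.

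I do not anticipate a serious obstacle here; the argument is essentially bookkeeping. The one point deserving care is that the weak outer approximation bound controls $f(\supp{\xi})$ only through the full combinatorial forward orbit $\bigcup_{n\ge 0}\cF^n(\xi)$, and not through the single image $\cF(\xi)$. Consequently forward invariance of $\cU$ (rather than the weaker hypothesis that $\cU$ be merely an attracting set) is precisely what is needed to keep that orbit inside $\cU$; with only a weak approximation in hand the argument would break down without this stronger assumption on $\cU$.
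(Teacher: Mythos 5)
Your proof is correct and follows essentially the same route as the paper's: forward invariance gives $\bigcup_{n\ge 0}\cF^n(\xi)\subset\cU$ for each $\xi\in\cU$, which combined with the weak outer approximation condition \eqref{eq:weakouterapprox} yields $f(\supp{\xi})\subset\Int\supp{\cU}$, and a union over $\xi\in\cU$ gives $f(\supp{\cU})\subset\Int\supp{\cU}$. The only difference is cosmetic: you additionally spell out that $\supp{\cU}$ is closed and that a trapping region is an attracting neighborhood, points the paper treats as definitional or cites from \cite{KMV-1a}.
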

\begin{proof} Since $\cF$ is a weak outer approximation,
for $\xi\in\cU$ we have
\[
f(|\xi|) \subset \Int\left|\bigcup_{n\ge0}\cF^n(\xi)\right| \subset \Int|\cU|
\]
because $\cF^n(\cU)\subset \cU$ for all $n\ge0$.
Therefore
$
f(|\cU|)  \subset \Int|\cU|,
$
which implies that $|\cU|$ is a trapping region for $f$. 
\end{proof}

\begin{proposition}
\label{prop:prop:discr-att-to-varphi2}
Let $\cF\colon\cX \mvmap \cX$ be an weak outer approximation for $f$. If $\cU\in\IS^-(\cX,\cF)$, then   $|\cU|\in \sRNbhd(X,f)$.
\end{proposition}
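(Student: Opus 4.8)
The plan is to prove the stronger statement that $|\cU|$ is in fact a \emph{repelling region}, i.e.\ $f^{-1}(\cl|\cU|)\subset\Int|\cU|$, which yields $|\cU|\in\sRNbhd(X,f)$ (the implication ``repelling region $\Rightarrow$ repelling neighborhood'' being the time-reversal of the implication ``trapping region $\Rightarrow$ attracting neighborhood'' invoked in Proposition~\ref{prop:discr-att-to-varphi1}). Since $|\cU|$ is regular closed we have $\cl|\cU|=|\cU|$, so it suffices to show $f^{-1}(|\cU|)\subset\Int|\cU|$. The argument mirrors the proof of Proposition~\ref{prop:discr-att-to-varphi1}, with the backward invariance of $\cU$ used via the forward invariance of its complement.

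First I would record the algebraic and topological preliminaries. By Proposition~\ref{prop:cms-forback}, $\cU\in\IS^-(\cX,\cF)$ gives $\cU^c\in\IS^+(\cX,\cF)$, so $\cF^n(\eta)\subset\cU^c$ for every $\eta\in\cU^c$ and every $n\ge 0$, whence $\bigl|\bigcup_{n\ge 0}\cF^n(\eta)\bigr|\subset|\cU^c|$. Because $\supp{\cdot}$ is a Boolean isomorphism onto $\scrR_{\cX}(X)$ (Corollary~\ref{cor:grid->lattice}), the Boolean complement of $|\cU|$ is exactly $|\cU^c|$, that is $|\cU^c|=|\cU|^\rc=\cl(|\cU|^c)$. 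Since $|\cU|$ is regular closed, its set complement is regular open, which supplies the two identities $\cl(|\cU|^c)=X\setminus\Int|\cU|$ and $\Int|\cU^c|=\Int\cl(|\cU|^c)=X\setminus|\cU|$.

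Next I argue by contradiction. Suppose $x\in X$ satisfies $f(x)\in|\cU|$ but $x\notin\Int|\cU|$. Then $x\in X\setminus\Int|\cU|=\cl(|\cU|^c)=|\cU^c|$, so $x\in|\eta|$ for some $\eta\in\cU^c$. Applying the weak outer approximation property~\eqref{eq:weakouterapprox} together with the forward invariance of $\cU^c$,
\[
f(x)\in f(|\eta|)\subset\Int\left|\bigcup_{n\ge 0}\cF^n(\eta)\right|\subset\Int|\cU^c|=X\setminus|\cU|,
\]
so that $f(x)\notin|\cU|$, contradicting the choice of $x$. Hence $f^{-1}(|\cU|)\subset\Int|\cU|$, which is the desired repelling-region property.

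The weak-approximation inclusion and the bookkeeping of forward images are routine. The step deserving the most care, and the main obstacle, is the topological translation between set complements and regular-closed complements: one must identify $\cl(|\cU|^c)$ with the grid set $|\cU^c|$ using that $\supp{\cdot}$ is \emph{Boolean} rather than merely a lattice map, and then exploit regularity to obtain $\Int|\cU^c|=X\setminus|\cU|$. This is precisely where the distinction between the operations on $\scrR(X)$ and on $\sSet(X)$ emphasized throughout the paper does the work, and mishandling these identities would break the contradiction.
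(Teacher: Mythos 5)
Your argument is correct, and it is genuinely different from---in fact stronger than---the paper's proof. The paper never touches $f^{-1}$: it passes from $\cU^c\in\IS^+(\cX,\cF)$ to $\supp{\cU^c}\in\sANbhd(X,f)$ via Proposition~\ref{prop:discr-att-to-varphi1}, and then runs a citation chain at the level of limit sets---\cite[Corollary 3.24]{KMV-1a} (complements of attracting neighborhoods are repelling neighborhoods) and \cite[Corollary 3.26]{KMV-1a} (closures of repelling neighborhoods are repelling neighborhoods)---before identifying $\cl\left(\supp{\cU^c}^c\right)=\supp{\cU}$ through Corollary~\ref{cor:grid->lattice}. You instead keep the pointwise trapping inclusion $f(\supp{\cU^c})\subset\Int\supp{\cU^c}$ and flip it set-theoretically through the regular-closed complement: writing $U=\supp{\cU}$, your identities $\supp{\cU^c}=X\setminus\Int U$ and $\Int\supp{\cU^c}=X\setminus U$ (both valid precisely because $U$ is regular closed) turn $f(X\setminus\Int U)\subset X\setminus U$ into exactly $f^{-1}(U)\subset\Int U$. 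This is noteworthy because Remark~\ref{rmk:other1}, which immediately follows this proposition, asserts that a direct argument would require a negative-time variant of \eqref{eq:weakouterapprox} and that it is \emph{not} clear whether $\supp{\cU}$ is a repelling \emph{region}: your computation, using only the forward condition on $\cU^c$, settles that question affirmatively and so strengthens both the proposition and the remark. The one step you assert by analogy rather than prove is that the region inclusion yields $\supp{\cU}\in\sRNbhd(X,f)$; since $f$ is neither injective nor surjective, ``time-reversal'' should not be invoked casually in this paper's setting, but the gap is two lines: $f^{-1}(U)\subset\Int U\subset U$ forces $f^{-k}(U)\subset f^{-1}(U)$ for all $k\ge 1$, and $f^{-1}(U)$ is closed, so $\alpha(U,f)\subset\cl\bigl(\bigcup_{k\ge 1}f^{-k}(U)\bigr)\subset f^{-1}(U)\subset\Int U$. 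In trade, the paper's route is shorter given the companion paper's machinery, while yours is self-contained---needing only Proposition~\ref{prop:cms-forback}, Corollary~\ref{cor:grid->lattice}, and point-set regularity---and delivers the sharper repelling-region conclusion.
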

\begin{proof}
Let $\cU\in\IS^-(\cX,\cF)$. 
By Proposition~\ref{prop:cms-forback}, $\cU^c\in\IS^+(\cX,\cF)$, and thus by Proposition~\ref{prop:discr-att-to-varphi1}, $\supp{\cU^c}\in \sANbhd(X,f)$.  
By \cite[Corollary 3.24]{KMV-1a} $\supp{\cU^c}^c \in \sRNbhd(X,f)$ and thus by \cite[Corollary 3.26]{KMV-1a}
$\cl\left(\supp{\cU^c}^c\right) \in \sRNbhd(X,f)$.  Finally, by Corollary~\ref{prop:grid->lattice}, $|\cU|\in \sRNbhd(X,f)$.
\end{proof}

\begin{remark}
\label{rmk:other1}
Another approach is to achieve the latter directly. In that case a negative time variation on (\ref{eq:weakouterapprox}) is needed.
The duality approach used here does not require additional assumptions and is therefore preferable.
However, the duality approach implies that $|\cU|$ is a repelling neighborhood, and it is \emph{not} clear whether $|\cU|$ is a
repelling \emph{region}.
\end{remark}

As the following proposition indicates, outer approximations, as opposed to weak outer approximations, allow one to obtain the same results using a larger variety of sets. 
The proof makes use of the following  observation.  If $\cF$ is an outer approximation for $f$, then 

\begin{equation}
\label{eqn:iterate}
f^n(|\xi|)  \subset \Int |\cF^n(\xi)|\quad \forall \xi\in \cX,\; \forall n\ge 0.
\end{equation}
This property can be derived as follows. Observe that 
\[
f^2(|\xi|) =  f(f(|\xi|))\subset f(\Int |\cF(\xi)|) \subset f(|\cF(\xi)|), 
\]
and by definition 
\[
f(|\cF(\xi)|) = \bigcup_{\xi' \in\cF(\xi)} f(|\xi'|) \subset \bigcup_{\xi' \in\cF(\xi)} \Int \supp{\cF(\xi')}
\subset \Int \supp{\cF^2(\xi)}.
\]
Equation \eqref{eqn:iterate} follows by proceeding inductively.

\begin{proposition}
\label{prop:discr-att-to-varphi2}
Let $\cF\colon\cG \mvmap \cG$ be an  outer approximation for $f$. If  
$\cU\in \sASet(\cX,\cF)$, then $|\cU| \in \sANbhd(X,f)$.
If $\cU\in \sRSet(\cX,\cF)$, then $|\cU| \in \sRNbhd(X,f)$.
\end{proposition}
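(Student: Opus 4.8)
The plan is to prove the two assertions separately: the attracting statement directly, exploiting the iterate estimate \eqref{eqn:iterate}, and the repelling statement by duality, reusing the attracting statement exactly as in the proof of Proposition~\ref{prop:prop:discr-att-to-varphi2}.

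For the attracting case, suppose $\cU\in\sASet(\cX,\cF)$. By Proposition~\ref{prop:char-attset} there is a $k>0$ with $\cF^n(\cU)\subset\cU$ for all $n\ge k$. Applying the evaluation map to \eqref{eqn:iterate} and using $|\cU|=\bigcup_{\xi\in\cU}|\xi|$, I would first record that for every $n\ge0$,
\[
f^n(|\cU|)=\bigcup_{\xi\in\cU}f^n(|\xi|)\subset\bigcup_{\xi\in\cU}\Int|\cF^n(\xi)|\subset\Int|\cF^n(\cU)|.
\]
For $n\ge k$ this gives $f^n(|\cU|)\subset\Int|\cF^n(\cU)|\subset\Int|\cU|$, since $\cF^n(\cU)\subset\cU$ and $|\cdot|$ is monotone. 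As $\omega(|\cU|,f)=\bigcap_{m}\cl\bigl(\bigcup_{n\ge m}f^n(|\cU|)\bigr)$ and the tail union with $m=k$ lies in $\Int|\cU|\subset|\cU|$, taking closures (and using that $|\cU|$ is closed) yields $\omega(|\cU|,f)\subset|\cU|$.

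The substantive point, and the step I expect to be the main obstacle, is upgrading this containment from $|\cU|$ to $\Int|\cU|$: a naive closure argument only gives $\omega(|\cU|,f)\subset\cl(\Int|\cU|)=|\cU|$, which is not strict enough. The remedy is invariance of the omega limit set. Writing $A:=\omega(|\cU|,f)$, one has $f(A)=A$ (a standard consequence of the compactness of $X$ and continuity of $f$, also recorded in \cite{KMV-1a}), so $A=f^k(A)$. Combining $A\subset|\cU|$ with monotonicity of $f^k$ and the estimate above then gives
\[
A=f^k(A)\subset f^k(|\cU|)\subset\Int|\cU|,
\]
so $\omega(|\cU|,f)\subset\Int|\cU|$, i.e.\ $|\cU|\in\sANbhd(X,f)$. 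Note that it in fact suffices to have the single inclusion $A\subset f(A)$, since monotonicity then propagates it to $A\subset f^k(A)$.

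For the repelling case I would argue by duality, mirroring Proposition~\ref{prop:prop:discr-att-to-varphi2}. If $\cU\in\sRSet(\cX,\cF)$, then $\cU^c\in\sASet(\cX,\cF)$ by Proposition~\ref{prop:invo}, so $|\cU^c|\in\sANbhd(X,f)$ by the case just proved. By \cite[Corollary 3.24]{KMV-1a} the set complement $|\cU^c|^c$ is a repelling neighborhood, and by \cite[Corollary 3.26]{KMV-1a} so is its closure $\cl(|\cU^c|^c)$. Finally, since $|\cdot|$ is a Boolean isomorphism onto $\scrR_{\cX}(X)$ (Corollary~\ref{cor:grid->lattice}), we have $|\cU^c|=|\cU|^\rc=\cl(|\cU|^c)$, whence $\cl(|\cU^c|^c)=\cl(\Int|\cU|)=|\cU|$, giving $|\cU|\in\sRNbhd(X,f)$. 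The only care needed here is the Boolean bookkeeping relating the regular complement $\cdot^\rc$ to set complement and closure.
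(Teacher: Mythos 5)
Your proof is correct and follows essentially the same route as the paper: the attracting case via Proposition~\ref{prop:char-attset} and the iterate estimate \eqref{eqn:iterate}, and the repelling case by complementation through Proposition~\ref{prop:invo}, the corollaries of \cite{KMV-1a}, and the Boolean structure of the evaluation map. Your additional invariance step $A=f^k(A)\subset f^k(|\cU|)\subset\Int|\cU|$ merely makes explicit a detail the paper leaves implicit in passing from $f^k(|\cU|)\subset\Int|\cU|$ for all large $k$ to the conclusion $|\cU|\in\sANbhd(X,f)$.
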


\begin{proof}
By Proposition~\ref{prop:char-attset} if $\cU\in \sASet(\cX,\cF)$, then there exists an $n\ge 1$ such that $\cF^k(\cU) \subset \cU$ for all $k\ge n$.
By \eqref{eqn:iterate}
$f^k(\xi) \subset \Int |\cF^k(\xi)|$ for all $k\ge n$ and all $\xi\in \cU$.
Therefore,
\begin{eqnarray*}
f^k(\supp{\cU}) &=&  \bigcup_{\xi\in \cU} f^k(\supp{\xi}) \subset \bigcup_{\xi\in \cU} \Int \bigl(\supp{\cF^k(\xi)}\bigr)\\
&\subset& \Int \left[ \bigcup_{\xi\in \cU}\supp{\cF^k(\xi)} \right] = \Int \supp{ \cF^k(\cU)} \subset  \Int |\cU|\;\;\forall k\ge n,
\end{eqnarray*}
and hence $\supp{\cU}\in \sANbhd(X,f)$.

To prove the second part, let $\cU\in \sRSet(\cX,\cF)$.  By Proposition~\ref{prop:invo} $\cU^c\in \sASet(\cX,\cF)$ and
thus $|\cU^c| \in \sANbhd(X,f)$.  By Corollary~\ref{prop:grid->lattice} 
\[
|\cU|^\rc = \supp{\cU^c},
\]
 and thus by \cite[Corollary 3.26]{KMV-1a} $|\cU| \in \sRNbhd(X,f)$.
\end{proof}

Let $\sANbhdR(X,f)$ and $\sRNbhdR(X,f)$ denote the sets of regular closed attracting and repelling neighborhoods, respectively.
\begin{proposition}
\label{prop:ANbhdR}
Given a continuous function $f\colon X\to X$ on a compact metric space, $\sANbhdR(X,f)$ and $\sRNbhdR(X,f)$ are sublattices of $\scrR(X)$.
\end{proposition}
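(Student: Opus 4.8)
The plan is to exploit that $\sANbhdR(X,f)=\sANbhd(X,f)\cap\scrR(X)$, so it suffices to check closure under the two lattice operations of the ambient lattice $\scrR(X)$, using that $\sANbhd(X,f)$ is already known (from \cite{KMV-1a}) to be a lattice under $\cup$ and $\cap$. The join in $\scrR(X)$ coincides with the join in $\sANbhd(X,f)$, namely ordinary union, so closure under $\vee$ is immediate: if $A,B\in\sANbhdR(X,f)$ then $A\cup B$ lies in $\scrR(X)$ by Proposition~\ref{prop:walker}(ii) and in $\sANbhd(X,f)$ because the latter is a lattice under union. The only substantive point is the meet.

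The difficulty is precisely the mismatch flagged in the introduction: the meet in $\scrR(X)$ is $A\wedge B=\cl(\Int(A\cap B))$, not the intersection $A\cap B$ that serves as the meet in $\sANbhd(X,f)$. Thus knowing $A\cap B\in\sANbhd(X,f)$ does not by itself yield $\cl(\Int(A\cap B))\in\sANbhd(X,f)$. I would set $C:=\cl(\Int(A\cap B))$, which is regular closed by Proposition~\ref{prop:walker}(iii), and verify directly the defining condition $\omega(C,f)\subset\Int(C)$.

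The key computation is to identify $\Int(C)$. Using $\Int(A\cap B)=\Int(A)\cap\Int(B)$ together with the observation that $C=\cl(\Int(A\cap B))\subset A\cap B$ (since $A\cap B$ is closed), one obtains $\Int(C)=\Int(A\cap B)=\Int(A)\cap\Int(B)$: the inclusion $\Int(A\cap B)\subset\Int(C)$ holds because $\Int(A\cap B)$ is open and contained in $C$, and the reverse holds because $\Int(C)$ is open and contained in $A\cap B$. With this in hand, monotonicity of $\omega(\cdot,f)$ finishes the argument, since from $C\subset A$ and $C\subset B$ it follows that $\omega(C,f)\subset\omega(A,f)\cap\omega(B,f)\subset\Int(A)\cap\Int(B)=\Int(C)$, where the middle inclusion uses that $A$ and $B$ are attracting neighborhoods. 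Hence $C\in\sANbhd(X,f)\cap\scrR(X)=\sANbhdR(X,f)$.

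The statement for $\sRNbhdR(X,f)$ follows by the identical argument with $\alpha(\cdot,f)$ replacing $\omega(\cdot,f)$, using that $\alpha$ is likewise monotone and that repelling neighborhoods are characterized by $\alpha(U,f)\subset\Int(U)$; alternatively one may invoke the duality encoded in diagram~\eqref{diag:AR2b}. The main obstacle is thus the discrepancy between the two meet operations, and it is resolved by the single observation that passing from $A\cap B$ to $\cl(\Int(A\cap B))$ neither enlarges the interior beyond $\Int(A)\cap\Int(B)$ nor escapes the closed set $A\cap B$, so the $\omega$-limit set is trapped exactly where it needs to be.
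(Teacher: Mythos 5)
Your proof is correct, and while it follows the same skeleton as the paper's (join is trivial since $\vee=\cup$ in both lattices; the substance is showing $U\wedge U' = \cl\bigl(\Int(U)\cap\Int(U')\bigr)$ is again an attracting neighborhood), your verification of the meet is genuinely more self-contained. The paper disposes of the meet by citation: \cite[Lemma 3.2]{KMV-1a} gives that $\Int(U)\cap\Int(U')$ is an attracting neighborhood, and passing to the closure preserves that property, so $U\wedge U'\in\sANbhd(X,f)$. You instead unpack what those citations hide into two elementary observations: first, the purely topological identity $\Int(C)=\Int(A\cap B)=\Int(A)\cap\Int(B)$ for $C=\cl(\Int(A\cap B))$, which you prove correctly from the two inclusions ($\Int(A\cap B)$ is open and contained in $C$, while $C\subset A\cap B$ since $A\cap B$ is closed); second, monotonicity of $\omega(\cdot,f)$, which is immediate from its defining formula and needs no injectivity or surjectivity of $f$. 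Together these give $\omega(C,f)\subset\omega(A,f)\cap\omega(B,f)\subset\Int(A)\cap\Int(B)=\Int(C)$ directly from the definition of attracting neighborhood. What the paper's route buys is brevity and consistency with the toolkit of \cite{KMV-1a}; what yours buys is independence from that toolkit, making the proposition verifiable within this paper alone, and the repeller case goes through symmetrically by monotonicity of $\alpha(\cdot,f)$ exactly as you say. The only cosmetic omission is the explicit remark that $\varnothing, X\in\sANbhdR(X,f)$ (the paper notes this to get a \emph{bounded} sublattice), which is trivial to add.
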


\begin{proof}
Since the elements of $\sANbhdR(X,f)$ are regular closed sets, then $\sANbhdR(X,f)\subset \scrR(X)$. 
Thus it only needs to be shown that $\sANbhdR(X,f)$ is a bounded lattice. 
Let $U,U' \in \sANbhdR(X,f)$.
Observe that $\varnothing, X\in \sANbhdR(X,f)$. 
By \cite[Lemma 3.2]{KMV-1a}, $\Int(U) \cap \Int(U') \in \sANbhdR(X,f)$, and therefore
\[
U\wedge U' = \cl \bigl( \Int(U) \cap \Int(U') \bigr) \in \sANbhd(X,f),
\]
which
proves that $\sANbhdR(X,f)$ is closed under the operations $\vee$ and $\wedge$ of $\scrR(X)$.
The same argument applies to $\sRNbhdR(X,f)$.
\end{proof}

\begin{remark}\label{rmk:eval}
As indicated in \cite{KMV-1a} $\sANbhd(X,f)$  is a sublattice of $\sSet(X)$ and  by Proposition~\ref{prop:ANbhdR}
$\sANbhdR(X,f)$ is a sublattice of $\scrR(X)$.  Since the operations in these two lattices are different $\sANbhd(X,f)$
and $\sANbhdR(X,f)$ are not interchangeable. The same comment applies to $\sRNbhd(X,f)$ and $\sRNbhdR(X,f)$.
\end{remark}

\begin{theorem}
\label{thm:first-evalf}
Let $\cX$ be an indexing set for a grid on $X$ and let $\cF\colon\cX\mvmap \cX$ be a weak outer approximation of $f$. Then,
\begin{equation}\label{diag:AR2}
\begin{diagram}
\node{\IS^+(\cX,\cF)} \arrow{e,l,<>}{^c} \arrow{s,l,V}{ |\cdot|}\node{\IS^-(\cX,\cF)} \arrow{s,r,V}{ |\cdot|}\\
\node{\sANbhdR(X,f)} \arrow{e,l,<>}{^\rc} \arrow{s,l,A}{\displaystyle \omega}\node{\sRNbhdR(X,f)} \arrow{s,r,A}{\displaystyle \alpha}\\
\node{\sAtt(X,f)} \arrow{e,l,<>}{^*}   \node{\sRep(X,f)} 
\end{diagram}
\end{equation}
is a commuting diagram of distributive lattices. 
The same statement holds if we replace $\IS^+(\cX,\cF)$ and $\IS^-(\cX,\cF)$ by $\sASet(\cX,\cF)$ and
$\sRSet(\cX,\cF)$ respectively.
\end{theorem}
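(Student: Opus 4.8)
The plan is to decompose the claim into its three constituent assertions: that every node of \eqref{diag:AR2} is a finite distributive lattice, that every edge is a lattice (anti-)homomorphism of the decorated type, and that the two squares commute. The lattice structure of the nodes is essentially already in hand: $\IS^+(\cX,\cF)$ and $\IS^-(\cX,\cF)$ are distributive by Proposition~\ref{prop:cms-forback} (and $\sASet,\sRSet$ by the proposition following it), $\sANbhdR(X,f)$ and $\sRNbhdR(X,f)$ are sublattices of the Boolean algebra $\scrR(X)$ by Proposition~\ref{prop:ANbhdR}, and $\sAtt(X,f),\sRep(X,f)$ are distributive lattices by \cite{KMV-1a}. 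So the genuine work lies in the edges and the commutativity.

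For the horizontal edges I would argue as follows. The top map $^c$ is an anti-isomorphism $\IS^+\leftrightarrow\IS^-$ by Proposition~\ref{prop:cms-forback}, and the bottom map $^*$ is the attractor--repeller duality, an anti-isomorphism by \cite{KMV-1a} (cf.~\eqref{diag:AR2b}). The middle map $^\rc$ is the Boolean complement of $\scrR(X)$ (Proposition~\ref{prop:walker}(iv)), hence an anti-automorphism of $\scrR(X)$; I only need that it carries $\sANbhdR$ bijectively onto $\sRNbhdR$, which is \cite[Corollaries 3.24 and 3.26]{KMV-1a}. For the top vertical arrows I would invoke Corollary~\ref{cor:grid->lattice}: the evaluation map $|\cdot|$ is a Boolean isomorphism onto $\scrR_{\cX}(X)$, so it is injective and sends the set operations $\cap,\cup$ on $\IS^+$ to the regular operations $\wedge,\vee$ of $\scrR(X)$; its image lies in $\scrR(X)$ by Corollary~\ref{cor:grid->lattice} and consists of attracting neighborhoods by Proposition~\ref{prop:discr-att-to-varphi1} (dually, of repelling neighborhoods for $\IS^-$ by Proposition~\ref{prop:prop:discr-att-to-varphi2}), hence lands in $\sANbhdR$ (resp.\ $\sRNbhdR$). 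This makes each top vertical arrow a lattice monomorphism.

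The delicate point is the bottom vertical map $\omega\colon\sANbhdR\to\sAtt(X,f)$ (and dually $\alpha$), because the meet on $\sANbhdR$ is the regular meet $\cl(\Int(\cdot)\cap\Int(\cdot))$ rather than the intersection under which $\omega\colon\sANbhd(X,f)\to\sAtt(X,f)$ is known to be an epimorphism. Join preservation is immediate since $\vee=\cup$ everywhere and $\omega$ preserves unions. For meets I would prove $\omega\bigl(\cl(\Int U\cap\Int U')\bigr)=\omega(U)\wedge\omega(U')$ by a \emph{sandwich}: the inclusion $\cl(\Int U\cap\Int U')\subset\cl(U\cap U')=U\cap U'$ gives ``$\subseteq$'' by monotonicity of $\omega$ together with meet preservation of $\omega$ on $\sANbhd$; conversely, since $U,U'$ are attracting neighborhoods one has $\omega(U)\cap\omega(U')\subset\Int U\cap\Int U'\subset\cl(\Int U\cap\Int U')$, so applying $\omega$ and using $\omega(\omega(U)\cap\omega(U'))=\omega(U)\wedge\omega(U')$ (the definition of $\wedge$ in $\sAtt$) yields ``$\supseteq$''. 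Surjectivity onto $\sAtt(X,f)$ follows from the fact that every attractor admits a regular closed attracting neighborhood \cite{KMV-1a}.

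Finally, commutativity. The top square asserts $|\cU^c|=|\cU|^\rc$, which is exactly complement preservation of the Boolean isomorphism $|\cdot|$ (Corollary~\ref{cor:grid->lattice}), the identity already used in the proof of Proposition~\ref{prop:discr-att-to-varphi2}. The bottom square asserts $\omega(U)^*=\alpha(U^\rc)$; this is the continuous duality of~\eqref{diag:AR2b}, where the only thing to reconcile is that \eqref{diag:AR2b} uses the ordinary complement $U^c$ while here we use $U^\rc=\cl(U^c)$. The reconciliation is $\alpha(U^c)=\alpha(\cl(U^c))$, i.e.\ closing up the open complement of an attracting neighborhood leaves its $\alpha$-limit set unchanged, which is precisely \cite[Corollary 3.26]{KMV-1a}. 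The variant with $\sASet,\sRSet$ in place of $\IS^\pm$ then goes through mutatis mutandis, with Proposition~\ref{prop:invo} supplying the complement anti-isomorphism and Proposition~\ref{prop:discr-att-to-varphi2} the well-definedness of the evaluation maps. I expect the main obstacle to be exactly the meet preservation of $\omega$ on $\sANbhdR$ together with the $^\rc$-versus-$^c$ reconciliation, since these are the only places where the differing lattice operations genuinely interact; everything else is bookkeeping over cited results.
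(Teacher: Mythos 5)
Your proposal is correct and its architecture coincides with the paper's: the same decomposition into node structure (Propositions~\ref{prop:cms-forback} and \ref{prop:ANbhdR}), edges, and the two squares; the same treatment of the upper square via the Boolean isomorphism of Corollary~\ref{cor:grid->lattice}, the identity $\supp{\cU^c}=\supp{\cU}^\rc$, and Propositions~\ref{prop:discr-att-to-varphi1}, \ref{prop:prop:discr-att-to-varphi2}, \ref{prop:discr-att-to-varphi2}; and the same identification of meet preservation of $\omega\colon\sANbhdR(X,f)\to\sAtt(X,f)$ as the only nontrivial point. The one substantive divergence is local: the paper proves the meet identity by first isolating Lemma~\ref{lem:omega-int}, namely $\omega(U)=\omega(\cl(U))=\omega(\Int(U))$ for $U\in\sANbhd(X,f)$, and then computing $\omega(U\wedge U')=\omega\bigl(\cl(\Int(U\cap U'))\bigr)=\omega(\Int(U\cap U'))=\omega(U\cap U')=A\wedge A'$ using \cite[Lemma 3.2, Proposition 4.1]{KMV-1a}, whereas your sandwich derives the two inclusions directly from monotonicity of $\omega$, meet preservation on $\sANbhd(X,f)$, and the definition $A\wedge A'=\omega(A\cap A')$ in $\sAtt(X,f)$. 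The two are equivalent in content --- setting $U=U'$ in your sandwich recovers Lemma~\ref{lem:omega-int} --- but yours is marginally more self-contained, while the paper's lemma is packaged for reuse (e.g.\ in the bottom-square duality and near Lemma~\ref{lem:latt-latt2}). You are in fact more explicit than the paper on the bottom square: the paper's proof leaves $\alpha(U^\rc)=\omega(U)^*$ to the cited duality from \cite{KMV-1a}, while you spell out the $^\rc$-versus-$^c$ reconciliation $\alpha(\cl(U^c))=\alpha(U^c)$; note only that this equality of $\alpha$-limit sets is really the $\alpha$-analogue of Lemma~\ref{lem:omega-int}, whereas \cite[Corollary 3.26]{KMV-1a} supplies that the closure is again a repelling neighborhood --- a mis-attribution of citation, not of substance. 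Similarly, your surjectivity step cites \cite{KMV-1a} where the paper cites \cite[Proposition 5.5]{KMV0}, but the underlying fact (every attractor admits a regular closed attracting neighborhood, e.g.\ $\cl(B_\epsilon(A))$ for small $\epsilon$) is sound, so nothing essential is missing.
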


The proof of this result makes use of the following lemma.

\begin{lemma}
\label{lem:omega-int}
Let $U\in \sANbhd(X,f)$, then $\omega(U) = \omega(\cl(U)) = \omega(\Int(U)) \subset \Int(U)$.
\end{lemma}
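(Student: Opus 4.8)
\textbf{Plan for Lemma~\ref{lem:omega-int}.}
The plan is to prove the chain of equalities $\omega(U) = \omega(\cl(U)) = \omega(\Int(U))$ by combining the monotonicity of $\omega$ with respect to set inclusion with the defining property of an attracting neighborhood. The inclusion $\omega(U) \subset \Int(U)$ is then just a restatement of the definition of $U \in \sANbhd(X,f)$, so the real content is showing that replacing $U$ by its closure or interior does not change the omega-limit set.

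First I would establish monotonicity: if $V \subset W$, then $\omega(V) \subset \omega(W)$. This is immediate from the definition $\omega(U) = \bigcap_{n} \cl(\bigcup_{k \ge n} f^k(U))$, since $f^k$ is monotone under inclusion and both the union and the closure preserve inclusions. Applying this to the nested sets $\Int(U) \subset U \subset \cl(U)$ gives
\[
\omega(\Int(U)) \subset \omega(U) \subset \omega(\cl(U)),
\]
so it suffices to prove the reverse inclusion $\omega(\cl(U)) \subset \omega(\Int(U))$, which then collapses the whole chain to equality.

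The key step is to show $\omega(\cl(U)) \subset \Int(U)$, and hence that $\cl(U)$ is itself an attracting neighborhood whose omega-limit set is trapped inside $\Int(U)$. For this I would use that $U \in \sANbhd(X,f)$ means $\omega(U) \subset \Int(U)$, together with the fact that $\omega(\cl(U))$ is forward invariant; the goal is to argue that once the forward images of $\cl(U)$ enter the open set $\Int(U)$, the asymptotic dynamics of $\cl(U)$ is governed by a set already contained in $U$. Concretely, I would invoke the characterization that attracting neighborhoods admit a trapping region or use the eventual-containment property (the topological analogue of Proposition~\ref{prop:char-attset}): there is an $n$ with $f^k(\cl(U)) \subset \Int(U)$ for all $k \ge n$, forcing $\omega(\cl(U)) = \omega(f^n(\cl(U))) \subset \omega(\Int(U))$ by monotonicity and the fact that omega-limit sets are insensitive to finitely many initial iterates.

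The main obstacle I anticipate is the passage to $\Int(U)$: showing $\omega(\cl(U)) \subset \omega(\Int(U))$ requires more than monotonicity, since $\Int(U)$ is smaller than $U$. The clean way around this is the eventual-containment fact above, which lets me absorb the closure: because $f^k(\cl(U)) \subset \Int(U)$ for large $k$, the tail of the forward orbit of $\cl(U)$ is a tail of the forward orbit of a subset of $\Int(U)$, and omega-limit sets depend only on such tails. Care is needed to justify that $\cl(U)$ being compact (as $X$ is a compact metric space) makes $f(\cl(U))$ compact and keeps these containments topologically well-behaved, and to confirm that the finite initial segment genuinely does not affect $\omega$. Once that containment is in hand the lemma follows by the squeeze between the three nested omega-limit sets.
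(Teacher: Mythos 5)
Your proposal is correct, but it takes a genuinely different route from the paper. The paper's proof is a two-line structural argument imported from the companion paper \cite{KMV-1a}: since $U$ is attracting, $\Int(U)$ is attracting, so $\omega(\Int(U)) = \Inv(\Int(U))$ by \cite[Corollary 3.6]{KMV-1a}; combined with $\omega(\Int(U)) \subset \omega(U) = \Inv(U,f) \subset \Int(U)$, the invariant set $\omega(U)$ sits inside $\Int(U)$ and hence inside $\Inv(\Int(U)) = \omega(\Int(U))$, collapsing the chain. You instead argue by monotonicity of $\omega$, eventual containment $f^k(\cl(U)) \subset \Int(U)$ for large $k$, and shift-invariance $\omega(f^n(V)) = \omega(V)$, which is more elementary and self-contained; it also has the virtue of explicitly handling $\omega(\cl(U))$, which the paper's proof silently omits (the statement includes it, but the written proof only addresses $\Int(U)$). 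Your eventual-containment step is legitimately available in this setting: it is exactly \cite[Proposition 3.21]{KMV-1a}, which this paper itself invokes in the proof of Proposition~\ref{prop:conv-latt-3} in the form $f^k(B_d(U)) \subset \Int(U)$ for $k \ge K$, and since $\cl(U) \subset B_d(U)$ this covers the closure. One small ordering issue to watch if you prove eventual containment from scratch rather than citing it: the natural compactness argument (a sequence $x_j \in f^{k_j}(\cl(U))\setminus\Int(U)$ accumulating in $\omega(\cl(U))$) requires knowing $\omega(\cl(U)) \subset \Int(U)$ first, so you should begin with the definitional identity $\omega(\cl(U)) = \omega(U)$, which follows with no dynamics from $f^k(\cl(U)) = \cl(f^k(U))$ on a compact metric space (the image of the compact set $\cl(U)$ is closed and contains $f^k(U)$, while continuity gives $f^k(\cl(U)) \subset \cl(f^k(U))$). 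With that identity in hand, your squeeze closes exactly as you describe.
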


\begin{proof}
If $U$ is attracting, then  $\Int(U)$ is attracting. 
This implies $\omega(\Int(U)) = \Inv(\Int(U))$, see \cite[Corollary 3.6]{KMV-1a}.
Moreover, $\omega(\Int(U)) \subset \omega(U) = \Inv(U,f) \subset \Int(U)$, which implies
that $\omega(\Int(U)) = \omega(U)$.
\end{proof}

\begin{proof}[Proof of Theorem~\ref{thm:first-evalf}]
The proof of the upper square follows from Propositions~\ref{prop:discr-att-to-varphi2} and \ref{prop:ANbhdR} and
the relation $|\cU|^\rc = |\cU^c|$.

The first step in the proof of the lower square is to show that $\omega\colon \sANbhdR(X,f) \to \sAtt(X,f)$, and
$\alpha\colon \sRNbhdR(X,f) \to \sRep(X,f)$ are lattice homomorphisms.
For $\vee = \cup$ the homomorphism property is obvious. 
As for $\wedge$, applying Lemma~\ref{lem:omega-int} and \cite[Proposition 4.1]{KMV-1a} results in
\[
\omega(U\wedge U') = \omega\bigl(\cl(\Int(U\cap U'))\bigr)  = \omega(\Int(U\cap U')) =\omega(U\cap U') = A\wedge A'.
\]
The same argument applies to repelling neighborhoods.
The surjectivity of $\alpha$ and $\omega$ follows from \cite[Proposition 5.5]{KMV0}
\end{proof}

Strengthening the assumptions on the outer approximation $\cF$ allows one to extend Theorem~\ref{thm:first-evalf}.

\begin{theorem}
\label{thm:commutative-diagram}
Let $\cX$ be an indexing set for a grid on $X$ and let $\cF\colon\cX\mvmap \cX$ be an outer approximation of $f$. 
Then
\begin{equation}
\label{diag:comm-liftfull}
\begin{diagram}
\node{}\node{\IS^+(\cX,\cF)} \arrow{e,l,<>}{^c} \arrow{s,l,V}{\imath} \arrow{sw,l,A}{\bomega}\node{\IS^-(\cX,\cF)} \arrow{s,r,V}{\imath}\arrow{se,l,A}{\balpha}	\\
\node{\sAtt(\cX,\cF)}\arrow{sse,r}{\omega(|\cdot|)}\node{\sASet(\cX,\cF)}\arrow{s,l,V}{|\cdot|}\arrow{w,l,A}{\bomega}\arrow{e,l,<>}{^c} 
\node{\sRSet(\cX,\cF)}\arrow{s,l,V}{|\cdot|}\arrow{e,l,A}{\balpha}\node{\sRep(\cX,\cF)} \arrow{ssw,r}{\alpha(|\cdot|)}	\\
\node{}\node{\sANbhdR(X,f)}\arrow{s,l,A}{\omega}\arrow{e,l,<>}{^\rc} \node{\sRNbhdR(X,f)}\arrow{s,l,A}{\alpha} \\
\node{}\node{\sAtt(X,f)} \arrow{e,l,<>}{^*} \node{\sRep(X,f)} 
\end{diagram}
\end{equation}
is a commutative diagram of distributive lattices where $\imath$ denotes  inclusion.
\end{theorem}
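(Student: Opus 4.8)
The plan is to decompose the large diagram \eqref{diag:comm-liftfull} into pieces that are already available and to isolate the genuinely new content, which is the pair of outer maps $\omega(\supp{\cdot})\colon \sAtt(\cX,\cF)\to\sAtt(X,f)$ and $\alpha(\supp{\cdot})\colon\sRep(\cX,\cF)\to\sRep(X,f)$ together with the two triangles they close. Indeed, the square formed by $\sASet(\cX,\cF)$, $\sRSet(\cX,\cF)$, $\sANbhdR(X,f)$, $\sRNbhdR(X,f)$ and the square formed by the two neighborhood lattices and $\sAtt(X,f)$, $\sRep(X,f)$ are exactly (the $\sASet/\sRSet$ version of) Theorem~\ref{thm:first-evalf}, while the upper part relating $\IS^+(\cX,\cF)$, $\IS^-(\cX,\cF)$, $\sASet(\cX,\cF)$, $\sRSet(\cX,\cF)$, $\sAtt(\cX,\cF)$ and $\sRep(\cX,\cF)$ through $\imath$, $\bomega$, $\balpha$ and $^c$ is diagram \eqref{eq:diag+-}. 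Since each of these is a diagram of distributive lattices and lattice homomorphisms, only the two outer triangles remain.

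The crux is the identity
\[
\omega(\supp{\cU}) = \omega(\supp{\bomega(\cU)}) \qquad \text{for all } \cU\in\sASet(\cX,\cF).
\]
I would prove it by two containments. The inclusion $\bomega(\cU)\subset\cU$ gives $\supp{\bomega(\cU)}\subset\supp{\cU}$, and monotonicity of $\omega$ yields $\omega(\supp{\bomega(\cU)})\subset\omega(\supp{\cU})$. For the reverse inclusion I would combine Proposition~\ref{prop:char-attset} and Proposition~\ref{prop:combomega}(i) to choose $K$ with $\cF^k(\cU)\subset\bomega(\cU)$ for all $k\ge K$, and then invoke \eqref{eqn:iterate} to conclude
\[
f^k(\supp{\cU}) \subset \Int\supp{\cF^k(\cU)} \subset \supp{\bomega(\cU)}\qquad (k\ge K).
\]
Because $\supp{\bomega(\cU)}$ is closed, this forces $\omega(\supp{\cU})\subset\supp{\bomega(\cU)}$. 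Now $\bomega(\cU)\in\sAtt(\cX,\cF)\subset\sASet(\cX,\cF)$, so by Proposition~\ref{prop:discr-att-to-varphi2} the set $\supp{\bomega(\cU)}$ is a regular closed attracting neighborhood, whence $\omega(\supp{\bomega(\cU)})=\Inv(\supp{\bomega(\cU)},f)$ is the maximal invariant subset of $\supp{\bomega(\cU)}$. Since $\omega(\supp{\cU})$ is invariant and contained in $\supp{\bomega(\cU)}$, it lies in this maximal invariant set, giving $\omega(\supp{\cU})\subset\omega(\supp{\bomega(\cU)})$ and hence equality.

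From this identity the remaining assertions are formal. Well-definedness of $\omega(\supp{\cdot})$ as a map into $\sAtt(X,f)$ is immediate, since every $\cA\in\sAtt(\cX,\cF)$ is an attracting set and Proposition~\ref{prop:discr-att-to-varphi2} together with Theorem~\ref{thm:first-evalf} place $\omega(\supp{\cA})$ in $\sAtt(X,f)$. The composite $\omega\circ\supp{\cdot}\colon\sASet(\cX,\cF)\to\sAtt(X,f)$ is a lattice homomorphism, being a composite of the homomorphisms of Theorem~\ref{thm:first-evalf}, and the identity above says precisely that it is constant on the fibers of the lattice epimorphism $\bomega\colon\sASet(\cX,\cF)\to\sAtt(\cX,\cF)$ of Proposition~\ref{att-rep-aset}; the universal property of a surjective lattice homomorphism then makes the induced map $\omega(\supp{\cdot})$ a lattice homomorphism and simultaneously yields commutativity of the left triangle, since $\omega(\supp{\cU})=\omega(\supp{\cdot})\bigl(\bomega(\cU)\bigr)$. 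The repeller triangle with $\alpha(\supp{\cdot})$ follows by the same argument run with $\balpha$, $\sRSet(\cX,\cF)$ and $\alpha$, or more economically by conjugating the left triangle with the three anti-isomorphisms $^c$ (Propositions~\ref{prop:invo} and \ref{prop:cms-star}), $^\rc$, and $^*$, whose compatibility with the squares is exactly what Theorem~\ref{thm:first-evalf} records.

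The main obstacle I anticipate is the reverse containment in the crux identity: one must pass from the combinatorial stabilization $\cF^k(\cU)\subset\bomega(\cU)$ to a genuine topological statement about $\omega(\supp{\cU})$, and this is precisely where the full strength of \emph{outer} approximation, via the iterate estimate \eqref{eqn:iterate}, is needed rather than merely a weak outer approximation. The subsequent identification $\omega(\supp{\bomega(\cU)})=\Inv(\supp{\bomega(\cU)},f)$ rests on $\supp{\bomega(\cU)}$ being an attracting neighborhood, so it is essential that $\bomega(\cU)$ be not just an attracting set but an element on which $\supp{\cdot}$ produces a regular closed trapping neighborhood.
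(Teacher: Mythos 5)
Your proposal is correct and follows essentially the same route as the paper: the same decomposition into the squares of Theorem~\ref{thm:first-evalf} (in its $\sASet/\sRSet$ form) and diagram \eqref{eq:diag+-}, the same appeal to duality for the repeller triangle, and your crux identity $\omega(\supp{\cU})=\omega(\supp{\bomega(\cU)})$ is exactly the paper's Lemma~\ref{lem:commute1}, proved the same way from the stabilization $\cF^k(\cU)\subset\bomega(\cU)$ together with the iterate estimate \eqref{eqn:iterate} (the paper phrases it as $S=f^k(S)\subset f^k(\supp{\cU})\subset\supp{\cF^k(\cU)}$ for $S=\omega(\supp{\cU})$). The only variation is cosmetic: where the paper verifies by a direct two-containment computation that $\omega(\supp{\cdot})\colon\sAtt(\cX,\cF)\to\sAtt(X,f)$ is a lattice homomorphism (Lemma~\ref{lem:latt-latt2}), you obtain the homomorphism property and the commutativity of the triangle simultaneously by factoring the lattice homomorphism $\omega\circ\supp{\cdot}\colon\sASet(\cX,\cF)\to\sAtt(X,f)$ through the epimorphism $\bomega$ of Proposition~\ref{att-rep-aset}, which is a valid and equivalent repackaging of the same content.
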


The proof of Theorem~\ref{thm:commutative-diagram} follows directly from Theorem~\ref{thm:first-evalf}, the following two lemmas, and the use of  duality between attractors and repellers to obtain the same lemmas for repellers.

\begin{lemma}
\label{lem:commute1}\label{prop:commute1}
Let $\cF\colon\cX\mvmap \cX$ be an outer approximation for a continuous mapping $f\colon X \to X$, and
let $\cU\in \sASet(\cX,\cF)$. Then $\omega(|\cU|) = \omega(|{\bomega}(\cU)|)$.
\end{lemma}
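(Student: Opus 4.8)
The goal is to prove that for an outer approximation $\cF$ and an attracting set $\cU \in \sASet(\cX,\cF)$, the attracting neighborhoods $|\cU|$ and $|\bomega(\cU)|$ have the same $\omega$-limit set in $X$. Since $\bomega(\cU) \subset \cU$ always holds (because $\cU$ is an attracting set), the evaluation map gives $|\bomega(\cU)| \subset |\cU|$, and monotonicity of $\omega$ immediately yields $\omega(|\bomega(\cU)|) \subset \omega(|\cU|)$. Thus the entire content of the lemma is the reverse inclusion $\omega(|\cU|) \subset \omega(|\bomega(\cU)|)$.

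The plan is to establish the reverse inclusion by showing that $|\cU|$ and $|\bomega(\cU)|$ eventually trap $f$-orbits into the same region. First I would invoke Proposition~\ref{prop:char-attset} to obtain an integer $k>0$ with $\cF^n(\cU) \subset \cU$ for all $n \ge k$, and observe via Proposition~\ref{prop:combomega}(i) that $\bomega(\cU) = \bigcup_{n \ge k} \cF^n(\cU) = \cF^k(\cU)$ for $k$ large enough (using that the forward images stabilize). The key combinatorial fact to exploit is that $\bomega(\cU)$ absorbs the forward dynamics of $\cU$: for $n \ge k$ we have $\cF^n(\cU) \subset \bomega(\cU)$. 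Translating this to the continuous level through the iterate estimate \eqref{eqn:iterate}, namely $f^n(|\xi|) \subset \Int|\cF^n(\xi)|$, I would derive that $f^k(|\cU|) \subset \Int|\cF^k(\cU)| \subset \Int|\bomega(\cU)|$.

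From the containment $f^k(|\cU|) \subset |\bomega(\cU)|$, the definition of $\omega$ gives the crucial step. Since $\omega(|\cU|,f) = \bigcap_{n} \cl(\bigcup_{m \ge n} f^m(|\cU|))$, and every forward image $f^m(|\cU|)$ for $m \ge k$ lands inside $|\bomega(\cU)|$, I would conclude $\omega(|\cU|) \subset |\bomega(\cU)|$. Because $|\bomega(\cU)|$ is itself an attracting neighborhood (by Proposition~\ref{prop:discr-att-to-varphi2}, since $\bomega(\cU) \in \IS^+(\cX,\cF) \subset \sASet(\cX,\cF)$), Lemma~\ref{lem:omega-int} applies and $\omega$ is well-behaved on it. The point is then that $\omega(|\cU|)$ is an $f$-invariant set contained in $|\bomega(\cU)|$, so by the characterization of attractors as maximal invariant sets in their neighborhoods one gets $\omega(|\cU|) \subset \omega(|\bomega(\cU)|)$; alternatively, invariance of $\omega(|\cU|)$ together with $\omega(|\cU|) \subset |\bomega(\cU)|$ forces $\omega(|\cU|) \subset \Inv(|\bomega(\cU)|) = \omega(|\bomega(\cU)|)$.

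The main obstacle I anticipate is the careful bookkeeping in the middle step: translating the purely combinatorial containment $\cF^n(\cU) \subset \bomega(\cU)$ into the continuous containment $f^n(|\cU|) \subset \Int|\bomega(\cU)|$ requires applying \eqref{eqn:iterate} uniformly over all $\xi \in \cU$ and interchanging the union over $\xi$ with the interior operation, which is exactly the type of manipulation already carried out in the proof of Proposition~\ref{prop:discr-att-to-varphi2}. One must be attentive that the evaluation map satisfies $|\cF^n(\cU)| = \bigcup_{\xi \in \cU}|\cF^n(\xi)|$ and that taking interiors distributes appropriately over these finite unions. Once this containment is secured, the passage to $\omega$-limit sets and the final invariance argument are routine, relying only on monotonicity of $\omega$ and Lemma~\ref{lem:omega-int}.
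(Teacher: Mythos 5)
Your proposal is correct and follows essentially the same route as the paper's proof: stabilize the combinatorial limit as $\bomega(\cU)=\cF^k(\cU)$ for $k$ large, use the iterate containment \eqref{eqn:iterate} to force the invariant set $S=\omega(|\cU|,f)=f^k(S)\subset f^k(|\cU|)$ into $|\bomega(\cU)|$, and conclude by invariance together with the easy monotonicity inclusion. The paper merely compresses your middle step by applying $f^k$ directly to $S$ rather than to all of $|\cU|$, so the extra appeals to Lemma~\ref{lem:omega-int} and Proposition~\ref{prop:discr-att-to-varphi2} in your final paragraph are sound but not needed.
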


\begin{proof}
From Proposition \ref{prop:combomega}(iv) we have that 
$\bomega(\cU) = \cF^k(\cU)$, for some $k$ large enough.
Set $S =  \omega(|\cU|,f)$. Then $f(S) = S$, and
\[
S = f^k(S) \subset f^k(|\cU|) \subset |\cF^k(\cU)| = |\bomega(\cU)|,
\]
which proves the lemma.
\end{proof}

\begin{lemma}
\label{lem:latt-latt2}
The mapping  $\omega(|\cdot|)\colon\sAtt(\cX,\cF) \to \sAtt(X,f)$ is a lattice homomorphism.
\end{lemma}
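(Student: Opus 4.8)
The plan is to factor $\omega(|\cdot|)$ through the larger lattice $\sASet(\cX,\cF)$ of attracting sets, on which the homomorphism property is already available, and then to use Lemma~\ref{lem:commute1} to transfer this to $\sAtt(\cX,\cF)$ despite the fact that the two lattices carry different meet operations. Concretely, Theorem~\ref{thm:first-evalf} tells us that the evaluation map $|\cdot|\colon \sASet(\cX,\cF)\to\sANbhdR(X,f)$ and the map $\omega\colon\sANbhdR(X,f)\to\sAtt(X,f)$ are both lattice homomorphisms, so their composite $\Phi:=\omega(|\cdot|)\colon\sASet(\cX,\cF)\to\sAtt(X,f)$ is a lattice homomorphism. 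Since every attractor is an attracting set, $\sAtt(\cX,\cF)\subset\sASet(\cX,\cF)$ as sets, and the only obstruction to $\Phi$ restricting to a homomorphism on $\sAtt(\cX,\cF)$ is that the meet there is $\bomega(\cA\cap\cA')$ rather than the intersection $\cA\cap\cA'$ used in $\sASet(\cX,\cF)$.

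For the join there is nothing to prove beyond bookkeeping: the join in both $\sAtt(\cX,\cF)$ and $\sASet(\cX,\cF)$ is union, so for $\cA,\cA'\in\sAtt(\cX,\cF)$ one has $\Phi(\cA\vee\cA')=\Phi(\cA\cup\cA')=\Phi(\cA)\cup\Phi(\cA')=\Phi(\cA)\vee\Phi(\cA')$, using that $\Phi$ preserves joins on $\sASet(\cX,\cF)$ and that $\cA\cup\cA'$ lands in both lattices.

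The meet is where the work is, and it is precisely Lemma~\ref{lem:commute1} that resolves it. Given $\cA,\cA'\in\sAtt(\cX,\cF)$, the intersection $\cU:=\cA\cap\cA'$ lies in $\sASet(\cX,\cF)$ and equals the $\sASet(\cX,\cF)$-meet of $\cA$ and $\cA'$, while the $\sAtt(\cX,\cF)$-meet is $\bomega(\cU)$. Applying Lemma~\ref{lem:commute1} to $\cU$ gives $\omega(|\bomega(\cU)|)=\omega(|\cU|)$, i.e. $\Phi(\bomega(\cU))=\Phi(\cU)$. Since $\Phi$ preserves meets on $\sASet(\cX,\cF)$ and $\cU$ is the meet of $\cA,\cA'$ there, $\Phi(\cU)=\Phi(\cA)\wedge\Phi(\cA')$, and combining the two identities yields $\Phi(\bomega(\cA\cap\cA'))=\Phi(\cA)\wedge\Phi(\cA')$, which is the desired meet-preservation. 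Preservation of the bounds $0=\varnothing$ and $1$ follows from the corresponding statements for the homomorphisms composing $\Phi$.

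The main obstacle is conceptual rather than computational: one must recognize that the discrepancy between the two meet operations is exactly what Lemma~\ref{lem:commute1} is designed to absorb, so that no new estimate involving $f$ or the outer approximation is needed here. All the analytic content has already been pushed into Theorem~\ref{thm:first-evalf} (the homomorphism property of $\omega\circ|\cdot|$ on $\sASet(\cX,\cF)$) and Lemma~\ref{lem:commute1} (the identity $\omega(|\cU|)=\omega(|\bomega(\cU)|)$), leaving only the assembly of these facts.
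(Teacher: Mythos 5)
Your proof is correct and relies on exactly the same two ingredients as the paper's own argument: the homomorphism property of $\omega(|\cdot|)$ on $\sASet(\cX,\cF)$ coming from Theorem~\ref{thm:first-evalf}, and the identity $\omega(|\cU|)=\omega(|\bomega(\cU)|)$ of Lemma~\ref{lem:commute1} applied to $\cU=\cA\cap\cA'$, so it is essentially the paper's proof. The only difference is organizational: you get meet-preservation as a single chain of equalities, whereas the paper proves two inclusions, the reverse one via the topological containment $A\cap A'\subset\Int(|\cA|)\cap\Int(|\cA'|)\subset|\cA\cap\cA'|$ --- a step your use of Lemma~\ref{lem:commute1} as an equality makes unnecessary.
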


\begin{proof}
The property for $\vee = \cup$ is obvious. Therefore, we restrict the proof to $\wedge$.
Let $\cA,\cA'\in \sAtt(\cX,\cF)$.
We have that 
\[
\omega(|\cA\wedge \cA'|) \subset \omega(|\cA\cap \cA'|) = \omega(|\cA|\wedge|\cA'|) = \omega(|\cA|) \wedge \omega(|\cA'|)
=A\wedge A',
\]
where $A = \omega(|\cA|)$ and $A' = \omega(|\cA|')$.
Conversely, since $A\subset \Int(|\cA|)$ and $A' \subset \Int(|\cA'|)$, we have
\[
A\cap A' \subset \Int(|\cA|) \cap \Int(|\cA'|) = \Int(|\cA|\cap|\cA'|) \subset \cl\bigl(\Int(|\cA|\cap|\cA'|) \bigr) = |\cA\cap\cA'|,
\]
and therefore $A\wedge A' \subset \omega(|\cA\cap\cA'|) = \omega(|\bomega(\cA\cap\cA')|) = 
\omega(|\cA\wedge\cA'|)$ by Lemma \ref{lem:commute1}.
Combining the inclusions proves the lemma.
\end{proof}

\begin{remark}
\label{rmk:other2}
Note that the evaluation map $|\cdot|\colon\sInvset^+(\cX,\cF)\to\sANbhdR(X,f)$ can be restricted to $\sAtt(\cX,\cF)$,
since every attractor is forward invariant. However, $\sAtt(\cX,\cF)$ is not a sublattice of $\sInvset^+(\cX,\cF)$,
since the lattice operations are different; $\sInvset^+(\cX,\cF)$ is a lattice under union and intersection, but
the $\wedge$ operation for $\sAtt(\cX,\cF)$ is $\cA\wedge\cA'=\bomega(\cA\cap\cA')$.
In particular
$$
|\cA\wedge\cA'|=|\bomega(\cA\cap\cA')|\subset|\cA\cap\cA'|=|\cA|\wedge|\cA'|
$$
but $|\cA\wedge\cA'|$ need not be equal to $|\cA|\wedge|\cA'|$ in general.
Therefore we cannot replace $\sInvset^+(\cX,\cF)$ by $\sAtt(\cX,\cF)$ in Diagram~$(\ref{diag:AR2})$.
The relationship between $\sAtt(\cX,\cF)$ and $\IS^+(\cX,\cF)$ is shown in Diagram (\ref{diag:comm-liftfull}).
\end{remark}

\subsection{Approximating dynamical systems}
\label{subsec:approximationDS}

In this section we address the question of how to approximate a general dynamical system $\varphi\colon \T^+\times X \to X$. 
If the time parameter $\T =\Z$, then the dynamical system is generated by the continuous map 
\[
f(\cdot) := \varphi(1,\cdot)\colon X\to X.
\]
In this case the dynamical system is represented  by a (weak) outer approximation (Definition~\ref{defn:outerapprox})
and the results of Section~\ref{subsec:at-rep-neigh} apply.
Thus, for the remainder of this section we assume that $\T =\R$, for which the question of choosing an appropriate
representation is more subtle.
For the definition of alpha and omega limit sets, and attractors and repellers we refer to \cite{KMV-1a}.

Recall that  a set $U\subset X$ is an {\em attracting neighborhood} for $\varphi\colon \R^+\times X\to X$ if
$\omega(U,\varphi) \subset \Int(U)$. A \emph{trapping region}  is a  forward invariant attracting neighborhood. 
 A set $A\subset X$ is an \emph{attractor}  if there exists an attracting neighborhood $U$  such that
$A=\Inv (U,\varphi)$ in which case $A = \omega(U,\varphi)$.
Repelling regions/neighborhoods and repellers can be define analogously, cf.\ \cite{KMV-1a}.
The notion for attracting and repelling neighborhoods is $\sANbhd(X,\varphi)$ and $\sRNbhd(X,\varphi)$, cf.\ \cite{KMV-1a}.

\begin{remark}
\label{rmk:isol}
Attractors and  repellers are examples of isolated invariant sets. In general, an  invariant set $S\subset X$ 
is an \emph{isolated invariant set} if there exist a neighborhood $U\subset X$ such that $
S = \Inv(U,\varphi) \subset \Int(U)$. The latter is called an i\emph{solating neighborhood}.
The notion of isolated invariant set is also of importance beyond attractors and repellers.
\end{remark}

The following lemma provides a relationship between  trapping regions for time-$\tau$ maps and  attracting neighborhoods for $\varphi$. 

\begin{lemma}
\label{lem:att-aux1}
If $U$ is a trapping region for the time-$\tau$ map $f(\cdot)=\varphi(\tau,\cdot)$, then $U$ is an attracting neighborhood
for $\varphi$.
\end{lemma}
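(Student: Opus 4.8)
The plan is to unwind the definitions and show directly that $\omega(U,\varphi) \subset \Int(U)$, using the hypothesis that $U$ is a trapping region for the time-$\tau$ map $f = \varphi(\tau,\cdot)$. Recall that a trapping region for $f$ means $U \in \sANbhd(X,f)$ together with $f(\cl(U)) \subset \Int(U)$; in particular $\varphi(\tau, \cl(U)) \subset \Int(U)$. The key geometric fact I would exploit is that the flow over a bounded time window $[0,\tau]$ can only move points a controlled amount, so that the full-flow omega-limit set is squeezed between the discrete-time iterates.

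First I would observe that forward invariance of $U$ under $f$ propagates to the continuous flow for all positive times. The hypothesis $\varphi(\tau,\cl(U)) \subset \Int(U)$ gives $\varphi(n\tau, \cl(U)) \subset \Int(U)$ for all $n \geq 1$ by iteration and the semigroup property. For intermediate times $t = n\tau + s$ with $0 \leq s < \tau$, I would write $\varphi(t,x) = \varphi(s, \varphi(n\tau, x))$ and argue that the forward orbit of $U$ stays in a fixed compact forward-invariant enlargement of $U$. The cleanest route is to set $V := \bigcup_{0 \leq s \leq \tau} \varphi(s, U)$ (or its closure), note that $V$ is compact since $X$ is compact and $\varphi$ is continuous on $[0,\tau]\times X$, and check that $V$ is forward invariant under $\varphi$, so that all forward images of $U$ under $\varphi$ lie in $V$.

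Next I would relate $\omega(U,\varphi)$ to $\omega(U,f)$. Since $\varphi$-forward images over one period are controlled by the time-$\tau$ map, the $\varphi$-omega-limit set $\omega(U,\varphi)$ should be contained in (indeed equal to) the forward $\varphi$-orbit of $\omega(U,f)$ over $[0,\tau]$, i.e. $\omega(U,\varphi) \subset \bigcup_{0\le s\le\tau}\varphi(s,\omega(U,f))$. Because $\omega(U,f) = \Inv(U,f) \subset \Int(U)$ and $\omega(U,f)$ is invariant under $f$, combined with $\varphi(\tau,\cl(U))\subset\Int(U)$, the short-time flow of this limit set stays inside $\Int(U)$; this is where the trapping (not merely attracting) hypothesis is essential, since it forces the one-period tube around $\omega(U,f)$ to remain in the interior. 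Putting the containments together yields $\omega(U,\varphi) \subset \Int(U)$, which is exactly the assertion that $U \in \sANbhd(X,\varphi)$.

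The main obstacle I expect is the interchange between discrete-time and continuous-time asymptotics: one must rule out that the flow, at non-multiples of $\tau$, escapes $\Int(U)$ even though all the time-$\tau$ snapshots land in $\Int(U)$. The trapping condition $f(\cl(U))\subset\Int(U)$ is precisely the uniform strict-inward estimate that controls this, and the technical heart of the argument is making the tube estimate $\varphi([0,\tau],\omega(U,f))\subset\Int(U)$ rigorous using compactness and continuity of $\varphi$ on $[0,\tau]\times\cl(U)$. I would be careful to use $\cl(U)$ rather than $U$ at the right moments, since the trapping hypothesis is phrased on the closure, and to invoke that $X$ is a compact metric space so that the relevant sets are compact and the limit sets are nonempty and well-behaved.
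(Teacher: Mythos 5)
Your first two steps track the paper's proof: your set $V=\bigcup_{0\le s\le\tau}\varphi(s,U)$ is exactly the paper's $U^\tau=\varphi([0,\tau],U)$, whose forward invariance under $\varphi$ is established by the same computation, and the containment $\omega(U,\varphi)\subset\varphi([0,\tau],\omega(U,f))$ is provable by a routine subsequence argument (write $t_k=n_k\tau+s_k$, extract limits). The genuine gap is your third step, the ``tube estimate'' $\varphi([0,\tau],\omega(U,f))\subset\Int(U)$. Note that with $A:=\omega(U,f)$ one has $A\subset\omega(U,\varphi)$ (the paper proves this by comparing tails), and $\omega(U,\varphi)$ is forward invariant under $\varphi$, so $\varphi([0,\tau],A)\subset\omega(U,\varphi)$; combined with your step two, $\omega(U,\varphi)=\varphi([0,\tau],A)$. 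Thus the tube estimate is \emph{equivalent} to the lemma --- you have reduced the statement to itself. Your stated justification does not close it: the trapping condition $f(\cl(U))\subset\Int(U)$ constrains only the time-$\tau$ snapshots of $\cl(U)$ and says nothing about $\varphi(s,\cdot)$ for $0<s<\tau$, while $f$-invariance of $A$ does not, a priori, prevent points of $A$ from making excursions outside $U$ between consecutive multiples of $\tau$ and returning to $A$ at integer times. Compactness and joint continuity of $\varphi$ on $[0,\tau]\times\cl(U)$ yield uniform modulus-of-continuity bounds, not containment in $U$. The paper closes exactly this gap by citing \cite[Theorem 1]{Mrozek1}: since $A$ is the maximal isolated invariant set of the time-$\tau$ map in $U$, it is also the maximal isolated invariant set of the flow in $U$, whence $\omega(U,\varphi)=A\subset\Int(U)$. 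Your proposal contains no substitute for this ingredient.

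For what it is worth, in the trapping-region setting the gap can be closed without the citation, but by a mechanism absent from your sketch: for each $s\in[0,\tau]$ the slice $S_s:=\varphi(s,A)$ satisfies $f(S_s)=\varphi(s,f(A))=S_s$, because $f=\varphi(\tau,\cdot)$ commutes with $\varphi(s,\cdot)$; the trapping condition forces any compact $f$-invariant subset $S\subset\cl(U)$ to satisfy $S=f(S)\subset f(\cl(U))\subset\Int(U)$; and $s\mapsto S_s$ is Hausdorff-continuous, so $\setof{s\in[0,\tau]\mid S_s\subset\Int(U)}$ is nonempty, open, and closed, hence equals $[0,\tau]$ by connectedness. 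This invariance-of-slices plus connectedness-of-time argument (a special case of Mrozek's) is what your phrase ``uniform strict-inward estimate'' is standing in for; without it, or the citation, the proof is incomplete at its central step.
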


\begin{proof}
Let $U$ be a trapping region for $f$ and let $A = \omega(U,f)$ denote the associated attractor.   
Set $U^\tau = \varphi([0,\tau],U)$. 
The first step of the proof is to show that $U^\tau$ is forward invariant under $\varphi$.
Observe that
\begin{equation*}
\begin{aligned}
\varphi([n\tau,(n+1)\tau],U) &= \varphi(n\tau,\varphi([0,\tau],U))\\
&= \varphi([0,\tau],\varphi(n\tau,U)) \\
&=  \varphi([0,\tau],f^n(U)) \subset  \varphi([0,\tau],U) = U^\tau,
\end{aligned}
\end{equation*}
where the inclusion follows from the forward invariance of $U$ under $f$.
Thus $\varphi([0,\infty),U^\tau) = \varphi([1,\infty),\varphi([0,\tau],U))\subset U^\tau$.

Since $U^\tau$ is forward invariant and $U^\tau = \varphi([0,\tau],U)$,
\[
\omega(U,\varphi) = \omega(U^\tau,\varphi) \subset U^\tau.
\]

Observe that 
\[
\cl\left(\bigcup_{k\geq n} f^k(U)\right)\subset \cl(\varphi([n,\infty),U)).
\]
Thus
\[
A = \bigcap_{n\ge 0} \cl\bigl( \bigcup_{k\ge n} f^k(U)\bigr)  \subset 
\bigcap_{n\ge 0}\cl(\varphi([n,\infty),U)) = \omega(U,\varphi).
\]

Since $A$ is the maximal isolated invariant set for the time $\tau$-map $f$ in $U$, it follows from \cite[Theorem 1]{Mrozek1} that $A$ is the maximal isolated invariant set for $\varphi$ in $U$. In particular,
\[
\omega(U,\varphi)= A \subset \Int(U)
\]
and hence $U$ is an attracting neighborhood. 
\end{proof}

\begin{proposition}
\label{prop:discr-att-to-varphi3}
Let $\cF\colon \cX \mvmap \cX$ be an weak outer approximation for a time-$\tau$ mapping $f=\varphi(\tau,\cdot)$. Then,
\begin{enumerate}
\item[(i)] if $\cU\subset \cX$ is a forward invariant set for $\cF$, then $U=|\cU|$ is an attracting neighborhood for  $\varphi$, and 
\item[(ii)] if $\cU\subset \cX$ is a backward invariant set for $\cF$, then $U=|\cU|$ is a repelling neighborhood for  $\varphi$.
\end{enumerate}
\end{proposition}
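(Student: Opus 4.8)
The plan is to reduce both statements to results already established for the time-$\tau$ map $f = \varphi(\tau,\cdot)$, together with the flow-level translation supplied by Lemma~\ref{lem:att-aux1}.

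For part (i), since $\cU$ is forward invariant I would first invoke Proposition~\ref{prop:discr-att-to-varphi1} to conclude that $U = |\cU|$ is a trapping region for the time-$\tau$ map $f$; in particular $f(\cl(U)) \subset \Int(U)$. The delicate point is that a trapping region for the discrete map $f$ is not a priori a trapping region, or even an attracting neighborhood, for the continuous-time system $\varphi$. This is exactly the gap that Lemma~\ref{lem:att-aux1} closes: it upgrades the time-$\tau$ trapping property to the statement that $U$ is an attracting neighborhood for $\varphi$. Applying it directly to $U = |\cU|$ finishes part (i), which is therefore essentially a two-step composition of known results.

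For part (ii), I would argue by duality, mirroring the proof of Proposition~\ref{prop:prop:discr-att-to-varphi2} but now at the level of $\varphi$ rather than $f$. Since $\cU$ is backward invariant, Proposition~\ref{prop:cms-forback} gives $\cU^c \in \IS^+(\cX,\cF)$, so part (i) applies to $\cU^c$ and shows $|\cU^c| \in \sANbhd(X,\varphi)$. The attractor--repeller duality recorded in diagram~\eqref{diag:AR2b}, together with the neighborhood-level statements \cite[Corollary 3.24]{KMV-1a} and \cite[Corollary 3.26]{KMV-1a} applied with $\varphi$ in place of $f$, then yields that the set-complement of an attracting neighborhood is a repelling neighborhood and that replacing it by its closure preserves this property, so $\cl\bigl(|\cU^c|^c\bigr) \in \sRNbhd(X,\varphi)$. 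Finally, since the evaluation map is a Boolean isomorphism by Corollary~\ref{prop:grid->lattice}, one has $\cl\bigl(|\cU^c|^c\bigr) = |\cU^c|^\rc = |\cU|$, which gives $|\cU| \in \sRNbhd(X,\varphi)$ as desired.

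The only genuinely delicate step is the flow-level duality in part (ii): one must verify that the complement and closure operations used to pass from attracting to repelling neighborhoods remain valid for the continuous-time system $\varphi$, not merely for the time-$\tau$ map $f$. This is precisely the content of the duality in \eqref{diag:AR2b} and the cited corollaries of \cite{KMV-1a}, read for $\varphi$; once these are granted, the argument is a routine composition of maps already in hand, and part (i) is immediate given Lemma~\ref{lem:att-aux1}.
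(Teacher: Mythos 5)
Your proposal is correct and matches the paper's own proof essentially step for step: part (i) is the composition of Proposition~\ref{prop:discr-att-to-varphi1} with Lemma~\ref{lem:att-aux1}, and part (ii) is the same duality argument via $\cU^c \in \IS^+(\cX,\cF)$, the corollaries of \cite{KMV-1a} read for $\varphi$, and the identification $\cl\bigl(|\cU^c|^c\bigr) = |\cU^c|^\rc = |\cU|$ from the Boolean structure of the evaluation map. The only cosmetic difference is that you cite \cite[Corollary 3.24]{KMV-1a} explicitly where the paper folds that step into its appeal to \cite[Corollary 3.26]{KMV-1a}, which changes nothing of substance.
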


\begin{proof}
Since $|\cU|$ is a trapping region for $f$ by Proposition~\ref{prop:discr-att-to-varphi1},  Lemma~\ref{lem:att-aux1} imply that $\omega(|\cU|,\varphi) \subset \Int|\cU|$,  which proves that $U= |\cU|$ is an attracting
neighborhood for $\varphi$.

If $\cU\in \IS^-(\cX,\cF)$, then $\cU^c\in \IS^+(\cX,\cF)$, and thus $|\cU^c| \in \sANbhd(X,\varphi)$. By  \cite[Corollary 3.26]{KMV-1a} we have that $\cl \left(\supp{\cU^c}^c\right)\in \sRNbhd(X,\varphi)$. Therefore,
\[
 |\cU^c|^\rc =\cl |\cU^c|^c = |\cU^{cc}| = |\cU| \in \sRNbhd(X,\varphi),
\]
which proves the second statement.
\end{proof}

For attracting and repelling sets we can prove a similar statement, if we consider strong outer approximations instead of 
weak outer approximations.

\begin{proposition}
\label{prop:discr-att-to-varphi4}
Let $\cF\colon \cX \mvmap \cX$ be a  outer approximation for a time-$\tau$ mapping $f=\varphi(\tau,\cdot)$. Then,
\begin{enumerate}
\item[(i)] if $\cU\subset \cU$ is an attracting set for $\cF$, then $U=|\cU|$ is an attracting neighborhood for  $\varphi$, and 
\item[(ii)] if $\cU\subset \cU$ is a repelling set for $\cF$, then $U=|\cU|$ is a repelling neighborhood for  $\varphi$. 
\end{enumerate}
\end{proposition}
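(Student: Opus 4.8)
The plan is to reduce part (i) to Proposition~\ref{prop:discr-att-to-varphi3}(i) by passing to a suitable iterate of $\cF$, and then to deduce part (ii) from part (i) by the same complementation argument used in the proof of Proposition~\ref{prop:discr-att-to-varphi3}. The essential point is that an attracting set $\cU$ need not be forward invariant, so $|\cU|$ need not be a trapping region for the time-$\tau$ map $f=\varphi(\tau,\cdot)$; hence Lemma~\ref{lem:att-aux1} cannot be applied directly to $f$. Repairing this is exactly where the hypothesis that $\cF$ is an \emph{outer} approximation, rather than merely a weak outer approximation, will be used.

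For part (i), I would first invoke Proposition~\ref{prop:char-attset} to obtain $k>0$ with $\cF^n(\cU)\subset\cU$ for all $n\ge k$. Since $km\ge k$ for every $m\ge1$, this gives $(\cF^k)^m(\cU)=\cF^{km}(\cU)\subset\cU$, so $\cU$ is forward invariant under $\cF^k$, i.e.\ $\cU\in\IS^+(\cX,\cF^k)$. Next I would use the iteration property \eqref{eqn:iterate}, which holds for outer approximations, to write
\[
f^k(|\xi|)\subset\Int|\cF^k(\xi)|\qquad\text{for all }\xi\in\cX,
\]
so that $\cF^k$ is itself an outer approximation — in particular a weak outer approximation — of the time-$(k\tau)$ map $f^k=\varphi(k\tau,\cdot)$. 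As $k\tau>0$, Proposition~\ref{prop:discr-att-to-varphi3}(i), applied to $f^k$ and the $\cF^k$-forward invariant set $\cU$, yields that $U=|\cU|$ is an attracting neighborhood for $\varphi$.

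For part (ii), I would argue by duality exactly as in the proof of Proposition~\ref{prop:discr-att-to-varphi3}. If $\cU\in\sRSet(\cX,\cF)$, then $\cU^c\in\sASet(\cX,\cF)$ by Proposition~\ref{prop:invo}, so part (i) gives $|\cU^c|\in\sANbhd(X,\varphi)$. By \cite[Corollary 3.26]{KMV-1a} we then have $\cl(|\cU^c|^c)\in\sRNbhd(X,\varphi)$, and since
\[
|\cU^c|^\rc=\cl(|\cU^c|^c)=|\cU^{cc}|=|\cU|
\]
by Corollary~\ref{prop:grid->lattice} (the Boolean isomorphism property of $|\cdot|$), we conclude $|\cU|\in\sRNbhd(X,\varphi)$.

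The only genuinely new step, and the main obstacle, is the passage to the iterate $\cF^k$ in part (i): it is what compensates for the fact that $|\cU|$ need not be a trapping region for $f$, and it relies on \eqref{eqn:iterate}, which is available for outer approximations but fails for weak ones. This is precisely why the hypothesis is strengthened relative to Proposition~\ref{prop:discr-att-to-varphi3}; once the iterate is in hand, everything else is a direct application of results already established.
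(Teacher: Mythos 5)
Your proposal is correct and takes essentially the same route as the paper: both proofs of (i) rest on the eventual forward invariance supplied by Proposition~\ref{prop:char-attset} together with the iteration property \eqref{eqn:iterate} for outer approximations, culminating in Lemma~\ref{lem:att-aux1} applied to a time-$(k\tau)$ map, and both obtain (ii) by the identical complementation argument from Proposition~\ref{prop:discr-att-to-varphi3}. The only difference is packaging: the paper invokes the proofs of Proposition~\ref{prop:discr-att-to-varphi2} and Lemma~\ref{lem:att-aux1} directly (via $\varphi(k\tau,\cl|\cU|)\subset\Int|\cU|$), whereas you recast $\cF^k$ as a weak outer approximation of $f^k=\varphi(k\tau,\cdot)$ so that Proposition~\ref{prop:discr-att-to-varphi3}(i) applies as a black box.
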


\begin{proof}
If $\cU\in \sASet(\cX,\cF)$, then from the proof of Proposition \ref{prop:discr-att-to-varphi2} it follows that
$\varphi(k\tau,\cl |\cU|) \subset \Int |\cU|$. From the proof of Lemma \ref{lem:att-aux1} we then derive that $|\cU|$ is
an attracting neighborhood for $\varphi$.

Proving that the same holds for $\cU\in \sRSet(\cX,\cF)$ follows in the same way as in the proof of Proposition 
\ref{prop:discr-att-to-varphi3}.
\end{proof}

From the above results we conclude that
\[
\sAtt(X,\varphi) = \sAtt(X,f)\quad\hbox{and}\quad \sRep(X,\varphi) = \sRep(X,f).
\]

The fact that $\sRep(X,\varphi) = \sRep(X,f)$ as sets also implies that they are the same as lattices, since the binary operations are $\cap$ and $\cup$. This implies that $\id\colon \sRep(X,\varphi) \to \sRep(X,f)$ is a lattice isomorphism.
For attractors, we have the same result.
\begin{corollary}
\label{for:sameA}
The identity $\id\colon \sAtt(X,\varphi) \to \sAtt(X,f)$ is a lattice isomorphism.
\end{corollary}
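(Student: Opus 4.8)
The plan is not to compare the limit operators $\omega(\cdot,f)$ and $\omega(\cdot,\varphi)$ directly, but to exploit the principle that a lattice is completely determined by its underlying partial order. Thus it suffices to check that the two lattice structures on the common set $\sAtt(X,\varphi)=\sAtt(X,f)$ induce the same order, and then to invoke uniqueness of meets.

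First I would collect the two ingredients already in hand. By the preceding results (in particular Lemma~\ref{lem:att-aux1} together with Propositions~\ref{prop:discr-att-to-varphi3} and \ref{prop:discr-att-to-varphi4}) the underlying sets coincide, $\sAtt(X,\varphi)=\sAtt(X,f)$, so $\id$ is a bijection; and in both lattices the join is the same set operation, $A\vee A'=A\cup A'$. The remaining argument is then purely order-theoretic. In any lattice the order is recovered from the join by $A\le A'\iff A\vee A'=A'$; since $\vee=\cup$ in both structures, this order is exactly set inclusion in each of $\sAtt(X,\varphi)$ and $\sAtt(X,f)$. Hence the two lattices carry the same underlying set \emph{and} the same partial order. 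Because each is a genuine lattice by \cite{KMV-1a}, its meet is the greatest lower bound with respect to this common order, and greatest lower bounds are unique; therefore $\omega(A\cap A',\varphi)=A\wedge_\varphi A'=A\wedge_f A'=\omega(A\cap A',f)$ for all attractors $A,A'$. Consequently $\id$ preserves both $\vee$ and $\wedge$ and is bijective, so it is a lattice isomorphism.

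The main obstacle is not located in this corollary at all but in the set-level identity $\sAtt(X,\varphi)=\sAtt(X,f)$ established just above it, which carries all the genuine dynamical content; once that equality and the shared join $\vee=\cup$ are available, the conclusion is formal. The one point I would state explicitly is why this order-theoretic detour is needed for attractors but not for repellers: for $\sRep$ both operations are the bare set operations $\cap$ and $\cup$, so equality as sets immediately yields equality as lattices, whereas for attractors the meet $\omega(\cdot\cap\cdot)$ is not a set operation and must instead be pinned down indirectly, through agreement of the joins and hence of the induced orders.
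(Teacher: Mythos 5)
Your proposal is correct and follows essentially the same route as the paper, whose proof consists precisely of observing that $\bigl(\sAtt(X,\varphi),\subset\bigr)=\bigl(\sAtt(X,f),\subset\bigr)$ as posets and that the meet, being the greatest lower bound, is therefore the same in both lattices. The only difference is that you make explicit the step the paper leaves implicit --- recovering the common order from the shared join $\vee=\cup$ via $A\le A'\iff A\vee A'=A'$ --- which is a harmless (and arguably clarifying) elaboration rather than a different argument.
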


\begin{proof}
Since  as posets $\bigl(\sAtt(X,\varphi),\subset\bigr) = \bigl(\sAtt(X,f),\subset\bigr)$, it follows that $A \wedge A'$ in $\sAtt(X,\varphi)$ is the same as $A\wedge A'$ in $\sAtt(X,f)$.
\end{proof}

Since the evaluation mapping $\cU \mapsto |\cU|$ yields regular closed sets and is a Boolean homomorphism,  we can summarize
the above propositions in the following commuting diagram.

\begin{equation}
\label{diag:AR2a}
\begin{diagram}
\node{}\node{\IS^+(\cX,\cF)} \arrow{e,l,<>}{^c} \arrow{s,l,V}{\imath} \arrow{sw,l,A}{\bomega}\node{\IS^-(\cX,\cF)} \arrow{s,r,V}{\imath}\arrow{se,l,A}{\balpha}	\\
\node{\sAtt(\cX,\cF)}\arrow{sse,r}{\omega(|\cdot|)}\node{\sASet(\cX,\cF)}\arrow{s,l,V}{|\cdot|}\arrow{w,l,A}{\bomega}\arrow{e,l,<>}{^c} 
\node{\sRSet(\cX,\cF)}\arrow{s,l,V}{|\cdot|}\arrow{e,l,A}{\balpha}\node{\sRep(\cX,\cF)} \arrow{ssw,r}{\alpha(|\cdot|)}	\\
\node{}\node{\sANbhdR(X,\varphi)}\arrow{s,l,A}{\omega}\arrow{e,l,<>}{^\rc} \node{\sRNbhdR(X,\varphi)}\arrow{s,l,A}{\alpha} \\
\node{}\node{\sAtt(X,\varphi)} \arrow{e,l,<>}{^*} \node{\sRep(X,\varphi)} 
\end{diagram}
\end{equation}

\section{Convergence and realization of algebraic structures via multivalued maps}
\label{sec:str-conv}
Convergent sequences of  outer approximations can be constructed as indicated in \cite{KMV0}.
For our purposes, we will use a modified notion of a convergent sequence of outer approximations and establish 
that arbitrary finite attractor lattices can be realized along such a sequence. We start with recalling some facts about outer approximations from \cite{KMV0}.

\subsection{Convergent sequences of  outer approximations}
\label{subsec:conv-fam}
Given a continuous map $f\colon X\to X$ and a grid indexed by $\cX$, \cite[Proposition 2.5]{KMV0} implies that there is a natural choice of   outer approximation which is minimal, namely
\[
\cFm(\xi) := \{ \eta\in \cX~|~ f(|\xi|) \cap |\eta| \not =\varnothing\}.
\]
We refer to this as the \emph{minimal multivalued mapping} for $f$ with respect to $\cX$.

A multivalued mapping $\cF$ \emph{encloses}  a multivalued mapping $\cF'$ if $\cF'(\xi) \subset \cF(\xi)$ for all $\xi\in \cX$.
Observe that this defines a partial order on multivalued mappings, which we  denote by $\cF'\le \cF$.

\begin{lemma}
\label{lem:enclose10}
(\cite[Corollary 2.6]{KMV0})
A multivalued mapping $\cF\colon\cX\mvmap \cX$ is a  outer approximation for $f\colon X\to X$
if and only if $\cF$ encloses
the minimal multivalued mapping $\cFm$.
\end{lemma}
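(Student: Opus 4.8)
The plan is to prove the biconditional by unwinding the definition of the minimal multivalued mapping $\cFm$ and comparing it directly with the outer approximation condition \eqref{eq:outerapprox}. Recall that $\cF$ is an outer approximation of $f$ precisely when $f(\supp{\xi}) \subset \Int\supp{\cF(\xi)}$ for every $\xi\in\cX$, while $\cFm(\xi) = \setof{\eta\in\cX \mid f(\supp{\xi})\cap\supp{\eta}\neq\varnothing}$. The key observation tying these together is that the grid elements $\setof{\supp{\eta}}$ cover $X$ (Proposition~\ref{prop:grid-sub}(i)) and are mutually regularly disjoint, so that the interior of a union of grid elements omits exactly those grid elements not in the union, up to boundary sets.

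First I would prove the reverse implication, namely that $\cFm \le \cF$ implies $\cF$ is an outer approximation. Since $\cFm$ is itself minimal, it suffices to first check that $\cFm$ is an outer approximation and then argue that enclosing it preserves this property. For the latter, note that if $\cFm(\xi)\subset\cF(\xi)$ then $\supp{\cFm(\xi)} \subset \supp{\cF(\xi)}$, hence $\Int\supp{\cFm(\xi)} \subset \Int\supp{\cF(\xi)}$, so $f(\supp{\xi}) \subset \Int\supp{\cFm(\xi)} \subset \Int\supp{\cF(\xi)}$. Thus the heart of the reverse direction is showing $f(\supp{\xi}) \subset \Int\supp{\cFm(\xi)}$. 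For this I would take $x\in f(\supp{\xi})$ and show $x$ lies in the interior of the union of those grid elements that $f(\supp{\xi})$ meets; the point is that any grid element $\supp{\eta}$ containing a neighborhood of $x$ must itself meet $f(\supp{\xi})$ and so belongs to $\cFm(\xi)$, while the grid elements \emph{not} in $\cFm(\xi)$ are regularly disjoint from a neighborhood of $x$.

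For the forward implication, suppose $\cF$ is an outer approximation but does not enclose $\cFm$, so there exist $\xi$ and $\eta\in\cFm(\xi)\setminus\cF(\xi)$. Then $f(\supp{\xi})\cap\supp{\eta}\neq\varnothing$, so pick a point $x$ in this intersection. Since $\eta\notin\cF(\xi)$, the grid element $\supp{\eta}$ is regularly disjoint from $\supp{\cF(\xi)}$, and by Lemma~\ref{lem:equivregcl} this means $\supp{\eta}\cap\Int\supp{\cF(\xi)} = \varnothing$, so $x\notin\Int\supp{\cF(\xi)}$. But $x\in f(\supp{\xi})\subset\Int\supp{\cF(\xi)}$ by the outer approximation hypothesis, a contradiction. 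This direction is essentially immediate once the regular-disjointness of distinct grid elements is invoked.

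The main obstacle I anticipate is the careful handling of interiors and boundaries in the reverse direction: grid elements are closed and overlap on their boundaries, so the claim that $f(\supp{\xi})$ lies in the \emph{interior} of $\supp{\cFm(\xi)}$ (rather than merely in $\supp{\cFm(\xi)}$ itself) requires genuinely using the regular-closed structure and the covering property, not just set-theoretic containment. The crux is to verify that a point $x = f(y)$ with $y\in\supp{\xi}$ cannot lie on the boundary between $\supp{\cFm(\xi)}$ and its complement; one must argue that every grid element meeting any neighborhood of $x$ is forced into $\cFm(\xi)$, which is where the definition of $\cFm$ via nonempty intersection with $f(\supp{\xi})$ does the work. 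Since this equivalence is quoted as \cite[Corollary 2.6]{KMV0}, I would expect the authors to simply cite the prior paper rather than reprove it, and indeed the cleanest proof here is to reduce both directions to Lemma~\ref{lem:equivregcl} and the covering property of Proposition~\ref{prop:grid-sub}.
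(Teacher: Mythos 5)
Your argument is correct, but note that the paper contains no proof of this lemma to compare against: it is quoted verbatim from \cite[Corollary 2.6]{KMV0}, exactly as you anticipated in your final paragraph, so what you have written fills in a citation rather than parallels an in-paper argument. Both directions of your proof check out. The forward direction is exactly right: for $\eta\in\cFm(\xi)\setminus\cF(\xi)$, atomicity gives $\supp{\eta}\wedge\supp{\cF(\xi)}=\varnothing$ (most cleanly because $\supp{\cdot}$ is a Boolean isomorphism, Corollary~\ref{cor:grid->lattice}), Lemma~\ref{lem:equivregcl} converts this to $\supp{\eta}\cap\Int\supp{\cF(\xi)}=\varnothing$, and any point of $f(\supp{\xi})\cap\supp{\eta}$ then contradicts \eqref{eq:outerapprox}. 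One refinement for the reverse direction: the step you identify as the crux, namely $f(\supp{\xi})\subset\Int\supp{\cFm(\xi)}$, does not actually require regular disjointness at all, only finiteness of $\cX$, closedness of the grid elements, and the covering property of Proposition~\ref{prop:grid-sub}(i). Given $x\in f(\supp{\xi})$, every grid element \emph{containing} $x$ (not, as you write at one point, containing a neighborhood of $x$) meets $f(\supp{\xi})$ and hence indexes an element of $\cFm(\xi)$; the remaining grid elements form a finite union of closed sets avoiding $x$, so the complement of that union is an open neighborhood of $x$, and by the covering property this neighborhood lies in $\supp{\cFm(\xi)}$. This is sharper than your phrasing that grid elements outside $\cFm(\xi)$ are ``regularly disjoint from a neighborhood of $x$'': what you get, and all you need there, is plain disjointness of finitely many closed sets from the point $x$. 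The regular-closed structure genuinely enters only in the forward direction, through Lemma~\ref{lem:equivregcl}.
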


Outer approximations are naturally generated by numerical approximations of $f$.
For $U\subset X$  let $\cov_{\cX}(U):=\{\eta\in\cX~|~U\cap |\eta|\neq\varnothing\}$. In this notation $\cFm(\xi)=\cov_{\cX}(f(|\xi|))$.
More generally, let ${\bm{\varrho}}\colon \cX \to [0,\infty)$ then by Lemma~\ref{lem:enclose10}
\[
\cF_{\bm\varrho}(\xi) := \bigl\{ \eta\in \cX~|~ B_{\rho(\xi)}\bigl(f(|\xi|)\bigr) \cap |\eta| \not =\varnothing\bigr\}=\textstyle{\cov_{\cX}}(B_{\rho(\xi)}(f(|\xi|)))
\]
is an outer enclosure. 
In the case ${\bm{\varrho}}(|\xi|)=\rho$ is constant for all $\xi\in\cX$, we call $\cF_\rho$ the \emph{$\rho$-minimal multivalued mapping} for $f$.
Observe that with this multivalued map the 'errors' in the image of $f$ are always smaller than $\rho$ plus the grid size expressed by
\begin{equation}
\label{eqn:diam}
\diam(\cX) = \max \{\diam(|\xi|)~|~\forall~\xi\in \cX\}.
\end{equation}

To define  convergent sequences of outer approximations we make use of  multivalued mappings $\cF$ that satisfy the {\em squeezing} condition
\begin{equation}
\label{eqn:squeeze1}
\cFm \le \cF\le \cFme.
\end{equation}
Outer approximations can always be enclosed by some $\rho$-multivalued mapping. 
Indeed, by choosing $\rho = \diam(X)$, we have that $\cFme$ encloses every 
outer approximation $\cF$. 

\begin{definition}
\label{defn:conv10}
Let $\cF_n\colon \cX_n\mvmap\cX_n$ be a sequence of outer approximations for $f\colon X\to X$. Then 
$\cF_n$ \emph{converges} if $\diam(\cX_n)\to 0$ and if there exist $\rho_n$-minimal maps $\cFmen$ with $\rho_n\to 0$ such that
\[
\cFmn \le \cF_n\le \cFmen\;\hbox{on $\cX_n$.}
\]
\end{definition}

The following proposition extends the convergence result for minimal multivalued mappings of \cite[Proposition~5.4]{KMV0} 
to $\rho$-minimal multivalued mappings.
\begin{proposition}
\label{prop:conv2}
Let $\epsilon>0$ and $k>0$. There exists $\delta >0$,
such that for every grid indexed by $\cX$, with $\diam(\cX)<\delta$, and every $\rho$-minimal mapping with
$\rho<\delta$ we have
\begin{equation}
\label{eqn:enclose3}
|\cFme^k(\xi)| \subset B_\epsilon (f^k(|\xi|)) \quad\hbox{and}\quad |\cFme^{-k}(\xi)| \subset B_\epsilon (f^{-k}(|\xi|))
\end{equation}
for all $\xi\in \cX$.
\end{proposition}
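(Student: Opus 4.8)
The plan is to reduce the statement to a single application of the cover operation followed by a propagation estimate governed by the uniform continuity of $f$, and then to iterate by induction on $k$. Two features of the hypotheses carry the argument: compactness of $X$ makes $f$ (hence each $f^j$, $1\le j\le k$) uniformly continuous, and it guarantees that $\diam(\cX)$ and $\rho$ can simultaneously be made arbitrarily small. Throughout I write $B_t(S)$ for the $t$-neighbourhood of $S$.

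First I would record a one-step estimate. For any $S\subset X$, every grid element meeting $S$ lies within $\diam(\cX)$ of $S$, so $|\cov_{\cX}(S)|\subset B_{\diam(\cX)}(S)$. Applying this to $S=B_\rho(f(|\xi|))$ and using $\cFme(\xi)=\cov_{\cX}(B_\rho(f(|\xi|)))$ gives
\[
|\cFme(\xi)|\subset B_{\diam(\cX)}\bigl(B_\rho(f(|\xi|))\bigr)\subset B_{\rho+\diam(\cX)}(f(|\xi|)).
\]
The same computation applied to a set $\cU$ (using $f(|\cU|)=\bigcup_{\eta\in\cU}f(|\eta|)$, so that $\cFme(\cU)=\cov_{\cX}(B_\rho(f(|\cU|)))$) yields $|\cFme(\cU)|\subset B_{\rho+\diam(\cX)}(f(|\cU|))$, hence the recursion
\[
|\cFme^{k}(\xi)|\subset B_{\rho+\diam(\cX)}\bigl(f(|\cFme^{k-1}(\xi)|)\bigr).
\]

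The forward inclusion then follows by induction on $k$. Fix $\epsilon>0$; the case $k=0$ is an equality. For the step, uniform continuity supplies $\gamma>0$ with $d(a,b)<\gamma\Rightarrow d(f(a),f(b))<\epsilon/2$, so that $f(B_\gamma(T))\subset B_{\epsilon/2}(f(T))$ for every $T$. Applying the inductive hypothesis at level $k-1$ with tolerance $\gamma$ produces $\delta_1$, and I would set $\delta=\min\{\delta_1,\epsilon/4\}$. For $\rho,\diam(\cX)<\delta$ we obtain $|\cFme^{k-1}(\xi)|\subset B_\gamma(f^{k-1}(|\xi|))$, whence $f(|\cFme^{k-1}(\xi)|)\subset B_{\epsilon/2}(f^{k}(|\xi|))$; combining with the recursion and $\rho+\diam(\cX)<2\delta\le\epsilon/2$ gives $|\cFme^{k}(\xi)|\subset B_\epsilon(f^{k}(|\xi|))$, uniformly in $\xi$.

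The backward inclusion is where I expect the genuine difficulty, and I would flag it as the main obstacle. The mirror of the one-step estimate reads $|\cFme^{-1}(\xi)|\subset B_{\diam(\cX)}\bigl(f^{-1}(B_\rho(|\xi|))\bigr)$, since $\eta\in\cFme^{-1}(\xi)$ forces $|\eta|$ to contain a point of $f^{-1}(B_\rho(|\xi|))$. To imitate the forward induction one would need a \emph{preimage-stability} estimate $f^{-1}(B_\rho(|\xi|))\subset B_{\epsilon'}(f^{-1}(|\xi|))$ for small $\rho$. Unlike the forward propagation, this does \emph{not} follow from uniform continuity: because $f$ is assumed neither injective nor surjective (nor open), a point whose image is only $\rho$-close to $|\xi|$ may lie far from the true preimage — indeed $f^{-1}(|\xi|)$ can be empty while $f^{-1}(B_\rho(|\xi|))$ is large. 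Consequently a mechanical time-reversal of the forward argument will not work, and establishing the estimate in the precise form needed is the crux: I would attempt it by exploiting compactness of $X$ together with the structure of the $\rho$-minimal maps, and where that is insufficient by routing the argument through the repeller side via the duality the paper develops for the lifting theorems. Once preimage-stability is secured, the induction on $k$ for $\cFme^{-1}$ is formally identical to the forward case.
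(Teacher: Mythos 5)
Your forward argument is correct and is essentially the paper's: the one-step cover estimate $|\cFme(\xi)|\subset B_{\rho+\diam(\cX)}(f(|\xi|))$ combined with uniform continuity of $f$ on the compact space $X$, iterated $k$ times. (The paper organizes the bookkeeping as a descending chain of tolerances $\delta_0=\epsilon,\delta_1,\dots,\delta_{k-1}$ chosen so that $f\bigl(B_{\delta_i}(f^{k-i}(|\xi|))\bigr)\subset B_{\delta_{i-1}/3}(f^{k-i+1}(|\xi|))$, rather than as your induction on $k$, but the content is identical.) The backward half, however, you have not proved: identifying preimage-stability $f^{-1}(B_\rho(|\xi|))\subset B_{\epsilon'}(f^{-1}(|\xi|))$ as the crux and proposing to ``attempt it by compactness'' or to ``route through duality'' is a plan, not an argument, so as written your proposal establishes only half of \eqref{eqn:enclose3}. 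Be aware also that the duality the paper develops ($\cU\mapsto\cU^c$, $A\mapsto A^*$) exchanges lattice-theoretic structures; it does not convert the forward metric estimate into the backward one, since $\cFme^{-1}$ is related to complements of forward images only at the level of invariant sets, not of $\epsilon$-neighborhoods of $f^{-k}(|\xi|)$. So that escape route would itself require a new idea.

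That said, your diagnosis of the obstruction is exactly right, and it exposes a genuine defect in the paper's own proof: the backward induction there rests on \eqref{eqn:uniform1back}, the assertion that for every $e>0$ there is $d>0$ with $f^{-1}(B_d(U))\subset B_e(f^{-1}(U))$ \emph{for every} $U\subset X$, ``by continuity of $f$ and compactness of $X$.'' For a single fixed compact set this is a standard compactness fact, but the $d$ cannot be chosen uniformly in $U$ when $f$ is not injective---which is precisely the generality the paper insists on. Indeed your empty-preimage scenario already falsifies the backward inclusion of the proposition itself as stated: take $X=[0,1]$, $f\equiv 0$, a uniform grid of mesh $h$, $\rho=2h<\delta$, and $\xi_1$ with $|\xi_1|=[h,2h]$; then $f^{-1}(|\xi_1|)=\varnothing$, while $B_\rho(f(|\eta|))=B_{2h}(\{0\})$ meets $|\xi_1|$ for every $\eta$, so $\cFme^{-1}(\xi_1)=\cX$ and $|\cFme^{-1}(\xi_1)|=X\not\subset B_\epsilon(\varnothing)$. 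A surjective counterexample to the uniform estimate is $f(x)=\min(2x,1)$ with $U=\{1-d/2\}$: here $f^{-1}(B_d(U))$ contains $f^{-1}(\{1\})=[1/2,1]$, far from $B_e(f^{-1}(U))=B_e(\{1/2-d/4\})$ for small $e$. So the backward half of \eqref{eqn:enclose3} is simply not attainable by the paper's (or any) argument without an extra hypothesis---for example $f$ a homeomorphism, in which case it is your forward argument applied to $f^{-1}$, or a restriction to a fixed finite family of sets with nonempty, robust preimages, where the per-set compactness lemma suffices. Your refusal to wave this step through was sound; the gap in your write-up coincides with a gap in the paper.
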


\begin{proof} First consider forward dynamics.
Since $X$ is a compact metric space,
 $f\colon X\to X$ is uniformly continuous so that for every $e>0$ there exists a $d>0$ such that
\begin{equation}
\label{eqn:uniform1}
f(B_{d}(U)) \subset B_e(f(U))
\end{equation}
for every $U\subset X$. 
Let $\delta_0 = \epsilon$.
From Equation (\ref{eqn:uniform1}) choose $\delta_i>0$ for $i=1,\cdots, k-1$ inductively so that
given $\delta_{i-1}$
\begin{equation}
\label{eqn:uniform2}
f\bigl(B_{\delta_i}(f^{k-i}(|\xi|)) \bigr)\subset B_{\delta_{i-1}/3}(f^{k-i+1}(|\xi|))
\end{equation}
for all $\xi\in \cX$.
Define $\delta  = \min_{0\le i\le k-1}\{\delta_i/3\}$. 

Let $\cX$ be the indexing set for  a grid on $X$ with $\diam(\cX) <\delta$, and consider $\cF_\rho$ on $\cX$ with $\rho<\delta$.
For the evaluation $|\cFme(\xi)|$ we have
\begin{equation}
\label{eqn:uniform3}
|\cFme(\xi)| \subset B_{\delta+\rho}(f(|\xi|)) \subset  B_{2\delta}(f(|\xi|))\subset
 B_{\delta_{k-1}}(f(|\xi|)).
 \end{equation}
We now proceed inductively. Recall that $\cFme^2(\xi) = \bigcup_{\eta\in \cFme(\xi)} \cFme(\eta)$, and for $\eta\in \cFme(\xi)$ Equation (\ref{eqn:uniform3}) implies
$\eta\subset B_{\delta_{k-1}}(f(|\xi|))$. Combining this with (\ref{eqn:uniform2}) we obtain
\[
f(|\eta|) \subset f\bigl( B_{\delta_{k-1}}(f(|\xi|)) \bigr) \subset B_{\delta_{k-2}/3}(f^2(|\xi|)),
\]
and by (\ref{eqn:uniform3})      applied to $\eta$ we have        
\[
|\cFme(\eta)| \subset B_{2\delta}(f(|\eta|)).
\]
Therefore, 
\[
|\cFme^2(\xi)| = \bigcup_{\eta\in \cFme(\xi)} |\cFme(\eta)| \subset B_{\delta_{k-2}/3+2\delta}(f^2(|\xi|))\subset B_{\delta_{k-2}}(f^2(|\xi|)).
\]

In the general case $i\ge 2$, 
for $\eta\in \cFme^{i-1}(\xi)$ we have $\eta\subset B_{\delta_{k-i+1}}(f^{i-1}(|\xi|))$ and $f(\eta) \subset 
B_{\delta_{k-i}/3}(f^i(|\xi|))$. This yields
\[
|\cFme^i(\xi)| = \bigcup_{\eta\in \cFme^{i-1}(\xi)} |\cFme(\eta)| \subset B_{\delta_{k-i}/3+2\delta}(f^i(|\xi|))\subset B_{\delta_{k-i}}(f^i(|\xi|)).
\]
After $k$ steps, we obtain $|\cFme^k(\xi)|\subset B_{\delta_{0}}(f^k(|\xi|))= B_{\epsilon}(f^k(|\xi|))$, which completes the proof for forward dynamics.

In the case of backward dynamics, we use the fact that for every $e>0$ there exists a $d>0$ such that
\begin{equation}
\label{eqn:uniform1back}
f^{-1}(B_{d}(U)) \subset B_e(f^{-1}(U))
\end{equation}
for every $U\subset X$, by continuity of $f$ and compactness of $X$.
Moreover, we will use the following characterization of $\cF_\rho^{-1}$
\begin{eqnarray*}
\cF_\rho^{-1}(\xi)&=&\{\eta\in\cX~|~B_\rho(f(\eta))\cap|\xi|\neq\varnothing\}=\{\eta\in\cX~|~B_\rho(|\xi|)\cap |\eta|\neq\varnothing\}\\
&=&\{\eta\in\cX~|~|\eta|\cap f^{-1}(B_\rho(|\xi|))\neq\varnothing\}
=\textstyle{\cov_{\cX}}(f^{-1}(B_\rho(|\xi|))).
\end{eqnarray*}

Now we proceed similarly to the previous case. Let $\delta_0 = \epsilon$.
From Equation (\ref{eqn:uniform1back}) choose $\delta_i>0$ for $i=1,\cdots, k$ inductively so that
given $\delta_{i-1}$
\begin{equation}\label{eqn:uniform2back}
f^{-1}\bigl(B_{\delta_i}(f^{-k+i}(|\xi|)) \bigr)\subset B_{\delta_{i-1}/3}(f^{-k+i-1}(|\xi|))
\end{equation}
for all $\xi\in \cX$.
Define $\delta  = \min_{0\le i\le k}\{\delta_i/3\}$. Then, since $\rho<\delta<\delta_i$,
\begin{eqnarray}
|\cF_\rho^{-1}(\xi)| &=& \bigl|\{\eta\in\cX~|~|\eta|\cap f^{-1}(B_\rho(|\xi|))\neq\varnothing\}\bigr|\nonumber \\
&\subset& \bigl|\{\eta\in\cX~|~|\eta|\cap f^{-1}(B_{\delta_{k-i+2}}(|\xi|))\neq\varnothing\}\bigr|\nonumber \\
&\subset& B_{\delta_{k-i+1}/3+\delta}(f^{-1}(|\xi|))
\label{eqn:uniform3back}
 \end{eqnarray}
for all $i=2,\ldots,k$ and $\xi\in\cX$.

We now proceed inductively. Recall that $\cFme^{-2}(\xi) = \bigcup_{\eta\in \cFme^{-1}(\xi)} \cFme^{-1}(\eta)$, and for $\eta\in \cFme^{-1}(\xi)$ Equation (\ref{eqn:uniform3back}) implies
$\eta\subset B_{\delta_{k-1}/3+\delta}(f^{-1}(|\xi|))$. Combining this with (\ref{eqn:uniform2back}) we obtain
\begin{eqnarray*}
f^{-1}(B_\rho(|\eta|)) &\subset& f^{-1}\bigl( B_{\delta_{k-1}/3+2\delta}(f^{-1}(|\xi|)) \bigr) \\
&\subset& f^{-1}\bigl( B_{\delta_{k-1}}(f^{-1}(|\xi|)) \bigr) \subset B_{\delta_{k-2}/3}(f^{-2}(|\xi|)).
\end{eqnarray*}
Therefore, 
\[
|\cFme^{-2}(\xi)| = \bigcup_{\eta\in \cFme^{-1}(\xi)} |\cFme^{-1}(\eta)| \subset B_{\delta_{k-2}/3+\delta}(f^{-2}(|\xi|)).
\]

In the general case $i\ge 2$, 
for $\eta\in \cFme^{-i+1}(\xi)$ we have $\eta\subset B_{\delta_{k-i+1}/3+\delta}(f^{-i+1}(|\xi|))$ and $f^{-1}(B_\rho(|\eta|)) \subset 
B_{\delta_{k-i}/3}(f^{-i}(|\xi|))$. This yields
\[
|\cFme^{-i}(\xi)| = \bigcup_{\eta\in \cFme^{-i+1}(\xi)} |\cFme^{-1}(\eta)| \subset B_{\delta_{k-i}/3+\delta}(f^{-i}(|\xi|)).
\]
After $k$ steps, we obtain $|\cFme^{-k}(\xi)|\subset B_{\delta_{0}/3+\delta}(f^{-k}(|\xi|))\subset B_{\delta_0}(f^{-k}(|\xi|))=B_{\epsilon}(f^{-k}(|\xi|))$, which completes the proof for backward dynamics.
\end{proof}

\begin{proposition}
\label{prop:conv11}
Let $\cF_n\colon\cX_n\mvmap\cX_n$ be a convergent sequence of outer approximations for $f\colon X\to X$.
For every $\epsilon>0$ and every $k>0$, there exists $N>0$ such that
\begin{equation*}
|\cF_n^k(\xi)| \subset B_\epsilon (f^k(|\xi|)) \quad\hbox{and}\quad |\cF_n^{-k}(\xi)| \subset B_\epsilon (f^{-k}(|\xi|)) 
\end{equation*}
for all $n\ge N$ and for all $\xi\in \cX_n$.
\end{proposition}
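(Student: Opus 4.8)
The plan is to reduce the statement for the actual maps $\cF_n$ to the corresponding statement for the $\rho_n$-minimal maps $\cFmen$, for which Proposition~\ref{prop:conv2} already supplies the required inclusions, and then to transfer these inclusions across the enclosure $\cF_n\le\cFmen$.

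First I would fix $\epsilon>0$ and $k>0$ and invoke Proposition~\ref{prop:conv2} to obtain a threshold $\delta>0$ such that any grid of diameter less than $\delta$, together with any $\rho$-minimal map with $\rho<\delta$, satisfies $|\cFme^{j}(\xi)|\subset B_\epsilon(f^{j}(|\xi|))$ for $j=k$ and $j=-k$. Since the sequence is convergent (Definition~\ref{defn:conv10}), we have $\diam(\cX_n)\to 0$ and $\rho_n\to 0$, so there exists $N$ such that for all $n\ge N$ both $\diam(\cX_n)<\delta$ and $\rho_n<\delta$. For such $n$ the conclusion of Proposition~\ref{prop:conv2} applies to $\cFmen$, giving
\[
|\cFmen^{k}(\xi)|\subset B_\epsilon(f^{k}(|\xi|))\quad\text{and}\quad |\cFmen^{-k}(\xi)|\subset B_\epsilon(f^{-k}(|\xi|))
\]
for all $\xi\in\cX_n$.

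The remaining point is to pass from $\cFmen$ to $\cF_n$. By the definition of convergence we have the enclosure $\cF_n\le\cFmen$, i.e.\ $\cF_n(\xi)\subset\cFmen(\xi)$ for every $\xi\in\cX_n$. I would record the elementary fact that enclosure is preserved under iteration: if $\cF'\le\cF$, then $\cF'^{j}(\xi)\subset\cF^{j}(\xi)$ for all $j\ge 0$, by a one-line induction using $\cF'^{j+1}(\xi)=\bigcup_{\eta\in\cF'^{j}(\xi)}\cF'(\eta)$ and monotonicity of the right-hand side in both the index set and the map. The same holds for the inverse maps, since $\cF_n(\eta)\subset\cFmen(\eta)$ for all $\eta$ yields $\cF_n^{-1}(\xi)=\{\eta\mid\xi\in\cF_n(\eta)\}\subset\{\eta\mid\xi\in\cFmen(\eta)\}=\cFmen^{-1}(\xi)$, and one then iterates as before. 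Consequently $\cF_n^{k}(\xi)\subset\cFmen^{k}(\xi)$ and $\cF_n^{-k}(\xi)\subset\cFmen^{-k}(\xi)$, and applying the (monotone) evaluation map $|\cdot|$ and combining with the bounds for $\cFmen$ yields the desired inclusions for $\cF_n$.

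I do not expect a genuine obstacle: the analytic content is carried entirely by Proposition~\ref{prop:conv2}, and the only thing requiring care is the monotonicity of iterates under enclosure, in particular that it holds for the inverse maps as well as the forward maps. This is immediate from the definition of $\cF^{-1}$, but it is the one place where the asymmetry between forward and backward dynamics could conceal an error, so I would verify the inverse case explicitly rather than assert it by symmetry.
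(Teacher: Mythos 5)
Your proposal is correct and follows essentially the same route as the paper: the paper's proof likewise establishes $\cF_n^k\le\cFmen^k$ by the same one-line induction on iterates and then applies Proposition~\ref{prop:conv2} with $N=\max\{N_1,N_2\}$ chosen so that $\rho_n<\delta$ and $\diam(\cX_n)\le\delta$. If anything, you are slightly more careful than the paper, which records only the forward inclusion explicitly and leaves the inverse case $\cF_n^{-k}\le\cFmen^{-k}$ implicit, whereas you verify it directly from the definition of $\cF^{-1}$.
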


\begin{proof}
We start with the observation that $\cF_n^k \le\cFmen^k$. Indeed, suppose true for $k-1$, then
\[
\cF_n^k(\xi) = \bigcup_{\eta\in \cF_n^{k-1}(\xi)} \cF_n(\eta) \subset \bigcup_{\eta\in \cF_n^{k-1}(\xi)}\cFmen(\eta)
\subset \bigcup_{\eta\in \cFmen^{k-1}(\xi)} \cFmen(\eta) = \cFmen^k(\xi).
\]
To complete the proof we choose $\delta>0$ such that the conclusion of Proposition \ref{prop:conv2} holds.
Choose $N>0$ such that $\rho_n<\delta$ for all $n\ge N_1$, and choose $N_2>0$ such that
$\diam(\cX_n) \le \delta$ for all $n\ge N_2$. Choosing $N= \max\{N_1,N_2\}$
completes the proof.
\end{proof}


\subsection{Realization of attractors and repellers}
\label{subsec:resol}

Theorem~\ref{thm:commutative-diagram} guarantees that forward invariant sets and attractors for an outer approximation $\cF$ of $f$
yield attracting neighborhoods  for $f$.
The converse statement is that  every attractor of a dynamical system can be realized  by an outer approximation provided the diameter of the grid is sufficiently small.  
Our goal is the stronger result that  the lattice structure of attractors can be realized.
We start by generalizing \cite[Proposition~5.5]{KMV0} from the context of minimal multivalued maps to the setting of convergent sequences of outer approximations.

\begin{proposition}
\label{prop:conv-latt-3}
Let $\cF_n\colon \cX_n \mvmap \cX_n$ be a convergent sequence of outer approximations for $f$, and let $U\in \sANbhd(X,f)$ or $U\in \sRNbhd(X,f)$. 
Then there exists 
$N>0$ such that for all $n\ge N$ the set  $\cU = \cov_{\cX_n}(U)$ is an attracting or repelling set  for $\cF_n$, respectively.
\end{proposition}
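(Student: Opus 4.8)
The plan is to establish the attracting case in full and then obtain the repelling case from the time-reversal symmetry already built into Proposition~\ref{prop:conv11} (the backward estimate) and the repelling half of Proposition~\ref{prop:char-attset}. So assume $U\in\sANbhd(X,f)$ and set $A:=\omega(U,f)$, which is a compact subset of $X$ with $A\subset\Int(U)$; I may also assume $U\neq\varnothing$, since otherwise $\cU=\varnothing$ is trivially attracting. The key idea is to produce a single iterate count $k_0$ and a single threshold $N$ for which the combinatorial image $\cF_n^{k_0}(\cU)$ already lands strictly inside $U$ when $n\ge N$, and then to promote this single-iterate containment to the tail condition $\cF_n^j(\cU)\subset\cU$ for all $j\ge k_0$ that Proposition~\ref{prop:char-attset} actually demands.

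First I would fix the geometric scales. Set $r:=\dist(A,X\setminus\Int(U))>0$ and choose $\delta\in(0,r/3)$. Since the compact sets $\cl(\bigcup_{k\ge m}f^k(U))$ decrease to $A$ as $m\to\infty$, a finite-intersection argument produces $k_0$ with $f^k(U)\subset B_\delta(A)$ for every $k\ge k_0$. Using uniform continuity of the finitely many maps $f^j$ with $k_0\le j<2k_0$, I obtain $\eta_0>0$ such that $f^j(B_{\eta_0}(V))\subset B_\delta(f^j(V))$ for all $V\subset X$ and all such $j$.

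Next I bring in the convergence. Applying Proposition~\ref{prop:conv11} with $\epsilon=\delta$ to each of the finitely many iterates $j\in\{k_0,\ldots,2k_0-1\}$ and taking the maximum of the resulting thresholds yields $N_1$ so that $|\cF_n^j(\xi)|\subset B_\delta(f^j(|\xi|))$ for all $n\ge N_1$, all such $j$, and all $\xi\in\cX_n$; choosing $N_2$ with $\diam(\cX_n)<\eta_0$ for $n\ge N_2$ and putting $N=\max(N_1,N_2)$, I claim $\cF_n^j(\cU)\subset\cU$ for every $j$ in the window. Indeed, for $\xi\in\cU=\cov_{\cX_n}(U)$ the element $|\xi|$ meets $U$ and has diameter below $\eta_0$, so $|\xi|\subset B_{\eta_0}(U)$; chaining the three inclusions then gives $|\cF_n^j(\xi)|\subset B_\delta(f^j(|\xi|))\subset B_\delta(B_\delta(f^j(U)))\subset B_{3\delta}(A)\subset B_r(A)\subset\Int(U)$. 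As every grid element is nonempty, each $\zeta\in\cF_n^j(\xi)$ satisfies $|\zeta|\subset\Int(U)$, whence $\zeta\in\cU$.

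Finally I would upgrade the window to the full tail by strong induction: for $j\ge 2k_0$ one has $k_0\le j-k_0<j$, so writing $\cF_n^j=\cF_n^{k_0}\circ\cF_n^{j-k_0}$ and combining monotonicity of $\cF_n^{k_0}$ with the inductive hypothesis $\cF_n^{j-k_0}(\cU)\subset\cU$ gives $\cF_n^j(\cU)\subset\cF_n^{k_0}(\cU)\subset\cU$. Hence $\cF_n^j(\cU)\subset\cU$ for all $j\ge k_0$, and Proposition~\ref{prop:char-attset} yields $\cU\in\sASet(\cX_n,\cF_n)$. The repelling statement follows from the identical argument run with $\balpha$, the preimage maps $f^{-j}$, the backward estimates of Proposition~\ref{prop:conv11}, and the repelling characterization in Proposition~\ref{prop:char-attset}. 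I expect the main obstacle to be precisely the bookkeeping that makes $N$ independent of the iterate $j$: Proposition~\ref{prop:conv11} controls only one iterate count at a time, so the window-plus-induction device is exactly what converts a single containment $\cF_n^{k_0}(\cU)\subset\cU$ into the tail containment built into the definition of an attracting set.
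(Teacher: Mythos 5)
Your proof is correct and follows essentially the same route as the paper's: both establish $\cF_n^j(\cU)\subset\cU$ on a finite window of iterates (your $k_0\le j<2k_0$, the paper's $K\le k\le 2K$) by combining Proposition~\ref{prop:conv11} with thresholds maximized over the window and smallness of $\diam(\cX_n)$, then extend to the full tail $j\ge k_0$ by the same composition/induction device, conclude via Proposition~\ref{prop:char-attset}, and treat the repelling case by time reversal. The only difference is cosmetic: where the paper invokes \cite[Proposition~3.21]{KMV-1a} to obtain $f^k(B_d(U))\subset\Int(U)$ for all $k\ge K$, you rederive the equivalent robustness estimate directly from $\omega(U,f)\subset\Int(U)$ using compactness of the nested sets $\cl\bigl(\bigcup_{k\ge m}f^k(U)\bigr)$ and uniform continuity of the finitely many iterates $f^j$ on the window.
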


\begin{proof}
We consider the case $U\in \sANbhd(X,f)$, the other case is analogous.
Let $A = \omega(U)$ and let $0<d < \frac{1}{2} \dist(U,A^*)$.
Since $U$ is an attracting neighborhood by \cite[Proposition 3.21]{KMV-1a}, there exists $K>0$ such that $f^k(B_d(U))
\subset \Int(U)$ for all $k\ge K$.
This implies that for $K \le k\le 2K$ there exists an $\epsilon>0$ such that
$B_\epsilon\bigl(f^k(B_d(U))\bigr)
\subset \Int(U)$.
By Proposition \ref{prop:conv11} we can choose $N$ such that $|\cF_n^k(|\xi|)| \subset B_\epsilon(f^k(|\xi|))$ for all
$K \le k \le 2K, \xi \in \cX_n,$ and $n\ge N$. We also choose $N$ such that $\cU=\cov_{\cX_n}(U) \subset
B_d(U)$. This yields
\[
|\cF_n^k(\cU)|\subset \bigcup_{\xi\in \cU} B_\epsilon(f^k(|\xi|))\subset \Int(U) \subset |\cU|,
\]
which implies that $\cF_n^k(\cU)\subset \cU$ for all $K\le k\le 2K$. Thus 
$\cF_n^k(\cU)\subset \cU$ for all $k \ge K$, since, for example, $\cF_n^{2K+k}=(\cF_n^K)^{K+k}$ for all
$0<k\le K$. Using Proposition \ref{prop:char-attset}, this proves that $\cU$ is an attracting set when 
$n$ is sufficiently large, i.e. $\diam(\cX_n)$ is sufficiently small.
\end{proof}

This leads to the following corollary, which in the case of the minimal multivalued map
 is also a consequence of  \cite[Proposition~5.5]{KMV0}.

\begin{corollary}
\label{cor:resolution1}
Let $\cF_n\colon \cX_n\mvmap \cX_n$ be convergent sequence of outer approximations for $f$, and let $A\in \sAtt(X,f)$ be
an attractor for $f$. For every $0< d < \frac{1}{2} \dist(A,A^*)$ there exists an $N>0$ such that for every
$n\ge N$ there is an attractor $\cA_n \in \sAtt(\cX_n,\cF_n)$ and a repeller $\cR_n \in \sRep(\cX_n,\cF_n)$ with
\[
A = \omega(|\cA_n|)\subset |\cA_n| \subset B_d(A)\quad\hbox{and}\quad A^* = \alpha(|\cR_n|)\subset |\cR_n| \subset B_d(A^*).
\]
\end{corollary}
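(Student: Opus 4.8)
The plan is to build $\cA_n$ as the combinatorial $\bomega$-limit of the cover of a small attracting neighborhood of $A$, and to obtain $\cR_n$ by the dual construction. First I would invoke the attractor theory of \cite{KMV-1a} to fix an attracting neighborhood (indeed a trapping region) $U$ of $A$ with $\omega(U)=A$ and $\cl(U)\subset B_{d/2}(A)$; the availability of arbitrarily small such neighborhoods is exactly what makes the diameter bound $B_d(A)$ attainable. Choosing $0<d_0<\frac12\dist(U,A^*)$, \cite[Proposition 3.21]{KMV-1a} supplies $K>0$ with $f^k(B_{d_0}(U))\subset\Int(U)$ for all $k\ge K$, and this already pins down the relevant limit set: since $\omega(B_{d_0}(U))$ is invariant and contained in $\Int(U)$ it lies in $\Inv(U)=\omega(U)=A$, while $A\subset B_{d_0}(U)$ gives the reverse inclusion, so $\omega(B_{d_0}(U))=A$.

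Next I would apply Proposition~\ref{prop:conv-latt-3} to $U$ to produce $N$ so that $\cU_n=\cov_{\cX_n}(U)$ is an attracting set for $\cF_n$ whenever $n\ge N$, enlarging $N$ if necessary so that $\diam(\cX_n)<\min\{d_0,d/2\}$. Then I set $\cA_n:=\bomega(\cU_n,\cF_n)$, which is an attractor by Proposition~\ref{prop:combomega}(ii) and is nonempty by left-totality of $\cF_n$ (Proposition~\ref{prop:combomega}(iii)). The containment $|\cA_n|\subset B_d(A)$ is then immediate: $\cA_n\subset\cU_n$, and every grid element meeting $U$ lies within $\diam(\cX_n)$ of $U$, so $|\cA_n|\subset|\cU_n|\subset B_{\diam(\cX_n)}(U)\subset B_d(A)$. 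For the limit set, Lemma~\ref{lem:commute1} gives $\omega(|\cA_n|)=\omega(|\cU_n|)$, and the inclusions $U\subset|\cU_n|\subset B_{d_0}(U)$ sandwich it via monotonicity of $\omega$: $A=\omega(U)\subset\omega(|\cU_n|)\subset\omega(B_{d_0}(U))=A$, hence $\omega(|\cA_n|)=A$. Since $\cA_n$ is an attractor, $|\cA_n|$ is a regular closed attracting neighborhood by Proposition~\ref{prop:discr-att-to-varphi2}, so $A=\omega(|\cA_n|)\subset\Int|\cA_n|\subset|\cA_n|$, completing $A=\omega(|\cA_n|)\subset|\cA_n|\subset B_d(A)$.

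For the repeller statement I would run the time-reversed version of the same argument, replacing $U$ by a repelling neighborhood $W$ of $A^*$ with $\alpha(W)=A^*$ and $\cl(W)\subset B_{d/2}(A^*)$, using the repelling case of Proposition~\ref{prop:conv-latt-3} to make $\cov_{\cX_n}(W)$ a repelling set and defining $\cR_n:=\balpha(\cov_{\cX_n}(W),\cF_n)$; the dual of Lemma~\ref{lem:commute1} together with the backward analogue of the trapping estimate then yield $A^*=\alpha(|\cR_n|)\subset|\cR_n|\subset B_d(A^*)$. (Alternatively one could take $\cR_n=\cA_n^*$ and appeal to the anti-isomorphism of Proposition~\ref{prop:cms-star}, but the direct dual construction delivers the neighborhood bounds more transparently.) I expect the main obstacle to be bookkeeping the nested choices — producing a single threshold $N$ that simultaneously makes $\cU_n$ an attracting set, forces $\diam(\cX_n)$ below both $d_0$ and $d/2$, and keeps $|\cU_n|$ inside $B_{d_0}(U)$ — together with securing the \emph{exact} equality $\omega(|\cU_n|)=A$ rather than a mere containment, which is precisely where the uniform trapping estimate $f^k(B_{d_0}(U))\subset\Int(U)$ is essential.
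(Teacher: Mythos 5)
Your proposal is correct and takes essentially the same route as the paper's proof: cover a small attracting neighborhood of $A$ so that $\cU_n$ is an attracting set by Proposition~\ref{prop:conv-latt-3}, set $\cA_n=\bomega(\cU_n)$, use Lemma~\ref{lem:commute1} to get $\omega(|\cA_n|)=\omega(|\cU_n|)$, sandwich this between two neighborhoods whose omega limit set equals $A$, conclude $A\subset \Int|\cA_n|$ from Proposition~\ref{prop:discr-att-to-varphi2}, and handle the repeller by the dual argument. The only difference is cosmetic: the paper works directly with $B_{d/2}(A)$ and $B_{d}(A)$, both attracting neighborhoods since $d<\frac{1}{2}\dist(A,A^*)$, whereas you interpose an auxiliary trapping region $U$ and radius $d_0$ (and your passing claim that $\omega(B_{d_0}(U))\subset\Int(U)$ really yields $\omega\subset\cl(U)$ first, after which the trapping property $f(\cl(U))\subset\Int(U)$ and invariance give the stated inclusion).
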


\begin{proof}
Fix $0< d < \frac{1}{2} \dist(A,A^*)$. By Proposition~\ref{prop:conv-latt-3}, there exists $N>0$ such that $\cU_n = \cov_{\cX_n}(B_{d/2}(A))$ is an attracting set for $\cF_n$ and $\cV_n = \cov_{\cX_n}(B_{d/2}(A^*))$ 
is a repelling set for $\cF_n$
for all $n\ge N$, since $B_{d/2}(A)$ and $B_{d}(A)$ are attracting neighborhoods, and 
$B_{d/2}(A^*)$ and $B_{d}(A^*)$ are repelling neighborhoods.
Choosing $N$ large enough so that $\diam(\cX_n)<d/2$ as well implies that
$|\cU_n|\subset B_d(A)$ and $|\cV_n|\subset B_d(A^*)$. 
Moreover, if $\cA_n=\bomega(\cU_n)$ and $\cR_n=\balpha(\cV_n)$, then 
$\cA_n\subset \cU_n$ and $\cR_n\subset \cV_n$ implies $|\cA_n|\subset B_d(A)$ and $|\cR_n|\subset B_d(A^*)$.
By Proposition~\ref{lem:commute1},   $\omega(|\cA_n|)=\omega(|\bomega(\cU_n)|)=\omega(|\cU_n|)$. 
Moreover $A=\omega(B_{d/2}(A))\subset\omega(|\cU_n|)\subset\omega(B_d(|\cU_n|)=A$ so that
$A=\omega(|\cA_n|)$. By Proposition~\ref{prop:discr-att-to-varphi2}, $|\cA_n|$ is an attracting neighborhood 
so that $\omega(|\cA_n|)\subset|\cA_n|$, which completes the proof for the attractor. The same argument holds for the repeller.
\end{proof}

\subsection{Posets, Lattices and Grids}
\label{subsec:PLG}
In the previous subsection we established that any attractor or repeller in a system can be realized via a multivalued map if the diameter of the grid is sufficiently small.  Furthermore, these discrete attractors and repellers correspond to arbitrarily narrow attracting and repelling neighborhoods, respectively.
To prove that the lattice structures can be realized via multivalued maps requires more subtle constructions based on the lattice and poset structures of grids and multivalued maps.

We begin by providing a systematic means of generating convergent sequences of outer approximations.
For the sake of  simplicity we will abuse notation throughout this subsection and often refer to the grid $\setof{\supp{\xi}\mid \xi\in\cX}$  by its indexing set $\cX$.



\begin{definition}
\label{defn:refinement}
A grid $\cX'$ on $X$ is a {\em refinement of $\cX$}, denoted by $\cX'\le \cX$, if for every $\xi'\in \cX'$ there exists exactly one $\xi\in \cX$ such that
$|\xi'|\subset |\xi|$. 
\end{definition}

Refinement defines a partial order on the space of grids $\sGrid(X)$, which can be used to compare multivalued maps.

\begin{definition}
\label{defn:cofil1}
Let $\cX' \le \cX$ be grids on $X$ and let $\cF\colon \cX \mvmap \cX$ and $\cF\colon\cX'
\mvmap \cX'$ be multivalued mappings. A partial order on multivalued mappings and grids is given by
\[
\cF'\le \cF \quad\text{if}\quad \supp{\cF'(\cU')} \subseteq \supp{\cF(\cU)}\ \text{for all}\ \supp{\cU'} = \supp{\cU}
\]
where $\cU'\in \sSet(\cX')$ and $\cU\in \sSet(\cX)$.
\end{definition}

\begin{definition}
\label{defn:commref}
The \emph{common refinement} of $\cX$ and $\cX'$ is the grid 
\[
\{|\xi|\wedge|\xi'|~|~\hbox{$\xi\in\cX$ and $\xi'\in\cX'$ with $|\xi|\wedge|\xi'|\neq\varnothing$}\}.
\]
The set of all pairs $\xi$ and $\xi'$ for which $|\xi|\wedge |\xi'|\neq\varnothing$ is
 an indexing set for this grid. We denote this indexing set by $\cX\wedge\cX'$ and an individual index
by $\xi\wedge\xi'$. 
\end{definition}
Note that whenever the index $\xi\wedge\xi'$ is used, it is implied that $|\xi|\wedge|\xi'|\neq\varnothing.$

The \emph{common refinement of multivalued mappings}  $\cF\colon \cX \mvmap \cX$ and $\cF\colon\cX'
\mvmap \cX'$ is given by
\begin{equation}
\label{eqn:common-milt}
(\cF\wedge \cF')(\xi\wedge\xi') := \{ \eta\wedge\eta' ~|~\eta\in \cF(\xi),~\eta'\in \cF'(\xi)\}.
\end{equation} 
Observe that $\cF\wedge \cF'\colon \cX\wedge \cX'\mvmap \cX\wedge \cX'$.

\begin{definition}
\label{defn:cofilt}
A \emph{cofiltration of grids} is a sequence $\{\cX_n\}_{n\in \N_0} \subset \sGrid(X)$ of refinements so that
\[
\cX_0 \ge \cX_1 \ge \cdots \ge \cX_n\ge \cdots
\]
Furthermore, given a  cofiltration of grids $\{\cX_n\}_{n\in \N_0}$, a sequence of multivalued mappings $\cF_n\colon \cX_n \mvmap \cX_n$, which satisfies
\[
\cF_0 \ge \cF_1 \ge \cdots \ge \cF_n \ge \cdots
\]
is called a \emph{cofiltration of multivalued mappings}.
The function $\diam\colon \sGrid(X) \to \R^+$ is order-preserving so that $\diam(\cX') \le \diam(\cX)$ for any pair $\cX'\le \cX$.
If $\diam(\cX_n) \to 0$ as $n\to \infty$, then a cofiltration $\{\cX_n\}_{n\in \N_0}$ of grids is said to be \emph{contracting}.
\end{definition}

Given any sequence of grids $\{\cX_n\}$ for which $\diam(\cX_n) \to 0$, we can construct a contracting cofiltration as
follows
\[
\cX_0 \ge \cX_0\wedge \cX_1 \ge \cX_0\wedge \cX_1 \wedge \cX_2 \ge \cdots \ge \bigwedge_{i=0}^n \cX_i \ge \cdots .
\]
If $\cF_n\colon \cX_n \mvmap \cX_n$ is a sequence of multivalued mappings with $\diam(\cX_n) \to 0$, then
\begin{equation}
\label{eqn:cofil2}
\bigwedge_{i=1}^n \cF_i \colon \bigwedge_{i=1}^n \cX_i \mvmap \bigwedge_{i=1}^n \cX_i
\end{equation}
is a cofiltration of multivalued mappings.

%
%
%
From an algorithmic point of view, given a grid $\cX_n$, one designs an algorithm to construct $\cF_n:\cX_n\mvmap\cX_n$.
The monotonicity of images of $\cF_n$ required for a cofiltration does not automatically follow
from the fact that $\cX_n$  is a cofiltration of grids, and the construction in equation~$(\ref{eqn:cofil2})$ 
is often inefficient in practical applications. From a theoretical point of view,
an important example of a convergent cofiltration of multivalued mappings is given by the $\rho$-minimal multivalued mappings on a
contracting cofiltration of grids. Theorems~\ref{thm:conv-latt-2} and~\ref{thm:conv-latt-3}
contrast what is attainable through a convergent cofiltration versus simply a convergent sequence of
multivalued mappings, see Remark~\ref{rmk:mono}.

Some properties of dynamics are preserved through cofiltrations. We only present the following which we make use of in the proof of Theorem~\ref{thm:conv-latt-2}.

\begin{proposition}
\label{prop:cofiltmap}
Let $\{\cX_n\}_{n\in \N_0}$ be a  cofiltration of grids and let $\cF_n\colon \cX_n\mvmap \cX_n$ be a  cofiltration of multivalued mappings.
Consider a collection of subsets $\cW_n\subset \cX_n$ such that $\supp{\cW_n} = \supp{\cW_m}\subset X$. 
If $m > n$ and $\cW_n \in \IS^-(\cX_n,\cF_n)$, then $\cW_m \in \IS^-(\cX_m,\cF_m)$.
\end{proposition}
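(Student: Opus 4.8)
The plan is to pass to complements and exploit the fact that the partial order $\cF_m\le\cF_n$ of Definition~\ref{defn:cofil1} is phrased through \emph{forward} images, whereas backward invariance is phrased through inverse images. First I would use Proposition~\ref{prop:cms-forback}: the hypothesis $\cW_n\in\IS^-(\cX_n,\cF_n)$ is equivalent to $\cW_n^c\in\IS^+(\cX_n,\cF_n)$, i.e. $\cF_n(\cW_n^c)\subseteq\cW_n^c$, and the desired conclusion $\cW_m\in\IS^-(\cX_m,\cF_m)$ is equivalent to the forward invariance $\cF_m(\cW_m^c)\subseteq\cW_m^c$. So the whole statement reduces to transporting forward invariance of the complement from $\cX_n$ to $\cX_m$.

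The key preliminary observation is that the two complements evaluate to the same regular closed set. Since $|\cW_n|=|\cW_m|$ by hypothesis, and the evaluation maps $|\cdot|\colon\sSet(\cX_n)\to\scrR(X)$ and $|\cdot|\colon\sSet(\cX_m)\to\scrR(X)$ are Boolean isomorphisms onto their images (Corollary~\ref{cor:grid->lattice}), complementation is preserved and computed in $\scrR(X)$ either way; hence $|\cW_n^c|=|\cW_n|^\rc=|\cW_m|^\rc=|\cW_m^c|$, where $^\rc$ is the Boolean complement in $\scrR(X)$. With this identity the defining inequality $\cF_m\le\cF_n$, applied to the pair $\cW_m^c\in\sSet(\cX_m)$ and $\cW_n^c\in\sSet(\cX_n)$ (which satisfy $|\cW_m^c|=|\cW_n^c|$), gives $|\cF_m(\cW_m^c)|\subseteq|\cF_n(\cW_n^c)|$. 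Combining this with the forward invariance $\cF_n(\cW_n^c)\subseteq\cW_n^c$, which yields $|\cF_n(\cW_n^c)|\subseteq|\cW_n^c|=|\cW_m^c|$, the two inclusions chain to $|\cF_m(\cW_m^c)|\subseteq|\cW_m^c|$ in $\scrR(X)$.

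Finally, because the evaluation map on $\cX_m$ is a Boolean isomorphism onto its image it reflects as well as preserves order, so $\cU\subseteq\cU'$ in $\sSet(\cX_m)$ holds if and only if $|\cU|\subseteq|\cU'|$ in $\scrR(X)$; this upgrades the last inclusion to $\cF_m(\cW_m^c)\subseteq\cW_m^c$, i.e. $\cW_m^c\in\IS^+(\cX_m,\cF_m)$, and Proposition~\ref{prop:cms-forback} returns $\cW_m\in\IS^-(\cX_m,\cF_m)$. I expect the only delicate points to be the two uses of the Boolean-isomorphism property of the evaluation map: first to transport complementation consistently across the two grids, and then to pull a set inclusion living in $X$ back to an inclusion of index sets in $\cX_m$. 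Everything else is a direct application of the order relation $\cF_m\le\cF_n$ together with the forward invariance of $\cW_n^c$.
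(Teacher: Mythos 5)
Your proof is correct, and it takes a genuinely different route from the paper's. The paper stays entirely on the backward side: it proves the containment $\supp{\cF_m^{-1}(\cW_m)} \subset \supp{\cF_n^{-1}(\cW_n)}$ by an element-wise argument --- given $\eta\in\cF_m^{-1}(\beta_m)$, it locates the unique coarse cell $\xi\in\cX_n$ with $\supp{\eta}\subset\supp{\xi}$, applies the cofiltration inequality to the union of all fine cells inside $\supp{\xi}$ to get $\supp{\cF_m(\eta)}\subset\supp{\cF_n(\xi)}$, and then deduces $\beta_n\in\cF_n(\xi)$ --- in effect showing that the cofiltration order transfers to inverse images on these sets. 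You instead dualize via Proposition~\ref{prop:cms-forback}, exploiting the happy fact that the order of Definition~\ref{defn:cofil1} is phrased in exactly the forward-image form you need: one global application of $\cF_m\le\cF_n$ to the pair $(\cW_m^c,\cW_n^c)$, plus the two Boolean-isomorphism facts you correctly flag (that $\supp{\cU^c}=\supp{\cU}^{\rc}$, which needs $\supp{\cX_n}=X$ so complements agree across the two grids, and that the evaluation map reflects inclusions between subsets of the same grid), closes the argument with no element chasing. What the paper's longer route buys is the stronger intermediate statement $\supp{\cF_m^{-1}(\cW_m)}\subset\supp{\cF_n^{-1}(\cW_n)}$, a monotonicity of inverse images along the cofiltration that never mentions complements. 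One point to be aware of in both proofs: for non-consecutive $m>n$ you invoke $\cF_m\le\cF_n$, which rests on transitivity of the relation $\le$ across grids; the paper declares $\le$ a partial order, so this is licensed, but verifying transitivity honestly requires checking that any $\cX_n$-representable set admits an intermediate representation on each finer grid, which follows from the refinement property (every coarse cell is the support of the set of fine cells it contains, by the regular-disjointness of atoms in Proposition~\ref{prop:grid-sub}). Since your single application of the order is to the pair $(\cW_m^c,\cW_n^c)$ with equal supports, and the definition quantifies over all such pairs, no further care is needed.
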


\begin{proof}
We need to show that $\cF_m^{-1}(\cW_m)\subset \cW_m$. Since $\cW_n \in \IS^-(\cX_n,\cF_n)$,  it is sufficient to show that $\supp{\cF_m^{-1}(\cW_m)} \subset \supp{\cF_n^{-1}(\cW_n)}$ so that
\[
\supp{\cF_m^{-1}(\cW_m)} \subset \supp{\cF_n^{-1}(\cW_n)} \subset \supp{\cW_n} = \supp{\cW_m}.
\]

Let $\beta_n\in\cW_n$ and $\beta_m\in\cW_m$ satisfy $\supp{\beta_m} \subset \supp{\beta_n}$. 
Consider $\eta\in\cF_m^{-1}(\beta_m)$.
By definition of cofiltration, there exists $\xi\in\cX_n$ such that $\supp{\eta}\subset\supp{\xi}$. 

Now $\beta_m\in\cF_m(\eta)$ which implies that
\[
\supp{\beta_m}\subset \supp{\cF_m(\eta)}\subset\bigcup_{\supp{\zeta}\subset\supp{\xi}}|\cF_m(\zeta)|=
\supp{\cF_m\left(\bigcup_{\supp{\zeta}\subset\supp{\xi}}\zeta\right)} \subset |\cF_n(\xi)|
\]
where the last inclusion follows from the definition of cofiltration. 
Since $\cF_n(\xi)$ is a union of elements of $\cX_n$, and $\supp{\beta_m}\subset \supp{\beta_n}$, we must have $\supp{\beta_n}\subset|\cF_n(\xi)|$, which implies $\beta_n\in\cF_n(\xi)$ and equivalently $\xi\in\cF_n^{-1}(\beta_n)$. 
Hence $\xi\in\cF_n^{-1}(\cW_n)$ and $\supp{\eta}\subset\supp{\xi}\subset \supp{\cF_n^{-1}(\cW_n)}$. 
Thus, if  $\eta\in\cF_m^{-1}(\cW_m)$, then $\supp{\eta}\subset \supp{\cF_n^{-1}(\cW_n)}$, and
therefore $\supp{\cF_m^{-1}(\cW_m)} \subset \supp{\cF_n^{-1}(\cW_n)}$.
\end{proof}

The realization of the lattice structures of attractors and repellers is presented in the language of lifts which are defined as follows.

\begin{definition}
\label{defn:lift}
Let $\sL$, $\sK$, and $\sH$ be bounded distributive lattices. Let $g\colon \sL \rightarrowtail \sK$ be a lattice monomorphism and $h\colon \sH\twoheadrightarrow \sK$ be a lattice
epimorphism. A lattice homomorphism $\ell\colon \sL\to \sH$ is a {\em lift of $g$ through $h$} if $g = h\circ \ell$.
\end{definition}
Observe that a lift is necessarily a lattice monomorphism.

As is made clear in the next section our goal is to construct a lift. 
To do this we make use of concepts from the theory of distributive lattices.
Recall that an element $c\in\sL$ is 
{\em join-irreducible} if
\begin{enumerate}
\item[(a)] $c\neq 0$ and
\item[(b)] $c=a\vee b$ implies $c=a$ or $c=b$ for all $a,b\in\sL$.
\end{enumerate}
The set of join-irreducible elements in $\sL$ is denoted by $\sJ(\sL)$.
Note that $\sJ(\sL)$  is a poset as a subset of $\sL$.
Observe that $c$ is join-irreducible if and only if there exists a unique element $a\in\sL$ satisfying
$a<c$ and there does not exist $b\in\sL$ such that $a<b<c$.
The element $a\in \sL$ is called the \emph{immediate predecessor} of $c$ and denoted by
\begin{equation}
\label{eqn:immpred}
a = \pred c.
\end{equation}

Given a finite poset $\sP$ with partial order $\le$, then the down-set of $p\in \sP$ is given by
$\down p=\{q\in\sP~|~q\le p\}$. These sets generate a finite distributive lattice
$\sO(\sP)$ in $\sSet(\sP)$ called the lattice of {\em down-sets}.
The elements $\down p$ are the join-irreducible elements in $\sO(\sP)$.
Birkhoff's representation theorem for finite distributive lattices $\sL$ 
states that $\sL \cong \sO(\sJ(\sL))$ cf.\ \cite{Davey:2002p5923}.

%

\begin{remark}
\label{rem:birkhoff}
Birkhoff's representation theorem
 allows us to recast the definition of $\ell$ being a lift of $g$ through $h$ via the following commutative diagram
\begin{equation}
\label{diag:lift12}
\begin{diagram}
\node{~} \node{\sH} \arrow{s,r,A}{h}\\
\node{\sO(\sP)} \arrow{ne,l,..,V}{\ell} \arrow{e,l,V}{g}  \node{\sK} 
\end{diagram}
\end{equation}
for any poset $\sP$ isomorphic to $\sJ(\sL)$.
For the sake of simplicity  we will abuse notation and use $\ell\colon \sL\to \sH$ and $\ell\colon \sO(\sP)\to \sH$ to denote two distinct, but equivalent homomorphisms.
\end{remark}

We are interested in the case in which $\sH$ is a Boolean algebra, or $\sH$ is embedded in a Boolean algebra,
 and thus we want to extend  the lift
$\ell\colon \sL \to \sH$ to a Boolean homomorphism. To do this we make use of the Booleanization functor.  The natural extension
$\sL \hookrightarrow \sSet(\sJ(\sL))$ is called the Booleanization of $\sL$ and is denoted by $\sB(\sL) = \sSet(\sJ(\sL))$. Booleanization is a covariant functor and 
the induced homomorphism $\sB(\ell) \colon \sB(\sL) \to \sH$ is Boolean and $\sB(\ell)|_\sL = \ell$, cf.\ \cite[Definition 9.5.5]{vickers} and \cite[Corollary 20.11]{miraglia}.


The combination of Birkhoff's representation theorem and the Booleanization functor allows one to give the following representation of  $\ell$:
\begin{equation}
\label{eqn:inatom}
\ell(\alpha) = \bigvee_{p\in \alpha} c_p
\end{equation}
where $c_p := \ell(\gamma) \setminus \ell(\beta)\in\sH$ and is independent of the choice of $\beta,\gamma\in \sO(\sJ(\sL))$
for which $\gamma \setminus \beta = p$, cf.\ Theorem 2.1 and Proposition 2.3 in \cite{KMV-1a}. 
Observe that the $c_p$ are atoms of $\sH$, i.e.\ if $p\not = p'$, then $c_p \wedge c_{p'} = 0$.

With these abstract constructions in mind, we now turn to the objects of interest. 
As is detailed in the next section, we are interested in lifts of the form $\ell\colon \sR \to \IS^-(\cX,\cF)$ where $\sR\subset \sRep(X,f)$ is a finite sublattice of repellers and  such that
$\alpha(|\ell(R)|) = R$ for all $R \in \sR$.
Since $\IS^-(\cX,\cF)$ embeds (as a lattice) into the Boolean algebra $\sSet(\cX)$, we can adopt the perspective that $\ell\colon \sR \to \sSet(\cX)$ is a lattice monomorphism.  
If we represent $\sR$ by a lattice isomorphism $\sO(\sP) \cong \sR$, where $\sP\cong\sJ(\sR)$, then
application of the Booleanization functor to $\ell\colon\sO(\sP) \to \IS^-(\cX,\cF)$ yields the Boolean monomorphism
$\sB(\ell) \colon \sSet(\sP)\to \sSet(\cX)$.  This allows us to represent $\ell$ by
\begin{equation}
\label{eqn:props-10}
\ell(\alpha) = \bigcup_{p\in\alpha} \cV_p
\end{equation}
where 
\begin{equation}
\label{eq:Vp}
\cV_p := \ell(\gamma)\setminus \ell(\beta)\subset \cX
\end{equation} 
for any choice of $\beta,\gamma\in\sO(\sP)$ such that $\setof{p}=\gamma\setminus\beta$. Since $\setof{\cV_p\mid p\in\sP}$ are atoms, $\cV_p\cap \cV_{p'} = \varnothing$ if $p\neq p'$. 

\begin{proposition}
\label{prop:Boolappl}
Let $\sO(\sP)$ be a finite distributive lattice, and let $\ell\colon \sO(\sP) \to \sSet(\cX)$ be a lattice monomorphism. 
Then $\sP$ is an indexing set for a grid on $X$ whose elements are
$ \supp{\cV_p} $
under the evaluation map $\supp{\cdot}\colon \sSet(\cX)\to \scrR(X)$ and
$\cV_p := \ell(\gamma)\setminus \ell(\beta)\subset \cX$ for any choice of $\beta,\gamma\in\sO(\sP)$ such that $\setof{p}=\gamma\setminus\beta$.
\end{proposition}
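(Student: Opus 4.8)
The plan is to recognize that showing $\sP$ indexes a grid is exactly showing, via the converse half of Proposition~\ref{prop:grid-sub}, that the finite family $\setof{\supp{\cV_p}\mid p\in\sP}$ consists of nonempty, mutually regularly disjoint, regular closed subsets of $X$ whose union is $X$; such a family is automatically the set of atoms of a finite subalgebra of $\scrR(X)$. So I would verify three things: each $\supp{\cV_p}$ is a nonempty regular closed set, the sets are pairwise regularly disjoint, and they cover $X$.

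For nonemptiness I would exploit the freedom in the definition of $\cV_p$ and choose the representatives $\gamma=\down p$ and $\beta=\down p\setminus\setof{p}$. Both are down-sets, $\gamma\setminus\beta=\setof{p}$, and $\beta\subsetneq\gamma$; since lattice homomorphisms are order preserving and $\ell$ is injective, $\ell(\beta)\subsetneq\ell(\gamma)$, whence $\cV_p=\ell(\gamma)\setminus\ell(\beta)\neq\varnothing$. Because $\supp{\cdot}\colon\sSet(\cX)\to\scrR(X)$ takes values in $\scrR(X)$ by Corollary~\ref{cor:grid->lattice}, each $\supp{\cV_p}$ is then a nonempty regular closed set; distinct $p$ yield distinct atoms since two nonempty regularly disjoint sets cannot coincide, so $p\mapsto\supp{\cV_p}$ is a genuine indexing.

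Regular disjointness and the covering property both come from transporting the atom decomposition of the $\cV_p$ through the evaluation map, which is a Boolean isomorphism onto $\scrR_{\cX}(X)$ by Corollary~\ref{cor:grid->lattice}. Since $\cV_p\cap\cV_{p'}=\varnothing$ for $p\neq p'$, applying $\supp{\cdot}$ gives
\[
\supp{\cV_p}\wedge\supp{\cV_{p'}}=\supp{\cV_p\cap\cV_{p'}}=\varnothing,
\]
so the sets are mutually regularly disjoint. For the cover, I would evaluate the representation \eqref{eqn:props-10} at the top element $\alpha=\sP$ of $\sO(\sP)$ and use that $\ell$ sends the top of $\sO(\sP)$ to the top $\cX$ of $\sSet(\cX)$:
\[
\bigcup_{p\in\sP}\supp{\cV_p}=\Bigl|\,\bigcup_{p\in\sP}\cV_p\,\Bigr|=\supp{\ell(\sP)}=\supp{\cX}=X,
\]
the final equality holding because $\cX$ itself indexes a grid (Proposition~\ref{prop:grid-sub}(i)). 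The converse direction of Proposition~\ref{prop:grid-sub} now yields that $\setof{\supp{\cV_p}\mid p\in\sP}$ is the set of atoms of a finite subalgebra of $\scrR(X)$, i.e.\ a grid indexed by $\sP$.

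The only subtle point is the covering step: it genuinely requires $\ell(\sP)=\cX$, that is, that $\ell$ preserves the top element, not merely meets and joins. A lattice embedding with $\ell(\sP)\subsetneq\cX$ would produce a family of sets missing part of $X$, hence not a grid. This is exactly the standing convention that the relevant lifts are monomorphisms of \emph{bounded} distributive lattices; once $\ell(\sP)=\cX$ is in hand, everything else reduces to routine transport of the disjoint, covering atom decomposition of $\sSet(\cX)$ through the Boolean isomorphism $\supp{\cdot}$.
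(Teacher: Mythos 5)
Your proof is correct, but it follows a genuinely different route from the paper's. The paper's own proof is a three-line functorial argument: by Corollary~\ref{cor:grid->lattice} the evaluation map is Boolean, so the composition $\supp{\sB(\ell)}\colon \sSet(\sP)\to\scrR(X)$ of the Booleanized lift with evaluation is a Boolean monomorphism; its image is therefore a finite subalgebra of $\scrR(X)$, and the \emph{forward} direction of Proposition~\ref{prop:grid-sub} identifies its atoms --- which are exactly the $\supp{\cV_p}$ --- as a grid. You bypass the Booleanization functor entirely and instead verify by hand the hypotheses of the \emph{converse} direction of Proposition~\ref{prop:grid-sub}: nonemptiness of each $\cV_p$ (via injectivity and monotonicity of $\ell$ with the representatives $\gamma=\down p$, $\beta=\down p\setminus\setof{p}$), mutual regular disjointness (transporting $\cV_p\cap\cV_{p'}=\varnothing$ through the Boolean isomorphism of Corollary~\ref{cor:grid->lattice}, so that $\supp{\cV_p}\wedge\supp{\cV_{p'}}=\varnothing$), and the covering property (evaluating \eqref{eqn:props-10} at the top element $\alpha=\sP$). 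What the paper's argument buys is brevity: atomicity, nonemptiness of the atoms, and the covering are all absorbed into ``Boolean monomorphism onto a finite subalgebra.'' What yours buys is that it surfaces the hypotheses that the functorial argument hides --- most notably that the covering step genuinely requires $\ell(\sP)=\cX$, i.e.\ that $\ell$ preserves the top element. In the paper's proof this is packed into $\sB(\ell)$ being a \emph{Boolean} (hence $0,1$-preserving) homomorphism, and it is legitimate here because the paper's standing convention for bounded distributive lattices (see Definition~\ref{defn:lift} and the remark following Definition~\ref{defn:part-lift} that $\ell(1)=1$) is that lattice homomorphisms preserve $0$ and $1$; your explicit flagging of this point is apt rather than a gap.
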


\begin{proof}
By Corollary~\ref{cor:grid->lattice} the evaluation map $|\cdot|\colon \sSet(\cX)\to \scrR(X)$ is Boolean, and thus the composition $\supp{\sB(\ell)}\colon \sSet(\sP)\to \scrR(X)$ is Boolean.  
In particular $\supp{\sB(\ell)(\sP)}$ is a finite subalgebra of $\scrR(X)$.  
Hence the atoms of $\supp{\sB(\ell)(\sP)}$, which are $\setof{\supp{\cV_p} \mid p\in \sP}$, form a grid of $X$.
\end{proof}

\begin{proposition}
Let $\sO(\sP)$ be a finite distributive lattice and let $\ell\colon \sO(\sP) \to \sSet(\cX)$ be a lattice monomorphism.  Then
\[
\supp{\cV_p} \cap \Int \supp{\ell(\alpha)} = \varnothing \quad\text{for all}\quad p\not\in\alpha \in \sO(\sP).
\]
\end{proposition}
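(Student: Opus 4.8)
The plan is to reduce the topological statement, which involves the interior operator, to a purely set-theoretic disjointness in $\sSet(\cX)$ and then transport it through the Boolean isomorphism $\supp{\cdot}$ supplied by Corollary~\ref{cor:grid->lattice}.

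First I would record that the subsets $\cV_p \subset \cX$ are atoms: from the representation $\ell(\alpha) = \bigcup_{p\in\alpha}\cV_p$ in \eqref{eqn:props-10} together with the fact, noted there, that $\cV_p \cap \cV_{p'} = \varnothing$ whenever $p \neq p'$. Consequently, if $p \notin \alpha$, then every $q \in \alpha$ satisfies $q \neq p$, so
\[
\cV_p \cap \ell(\alpha) = \bigcup_{q\in\alpha} (\cV_p \cap \cV_q) = \varnothing
\]
as subsets of $\cX$. Next I would apply the evaluation map. By Corollary~\ref{cor:grid->lattice} the map $\supp{\cdot}\colon \sSet(\cX) \to \scrR_{\cX}(X)$ is a Boolean isomorphism, hence it carries the meet of $\sSet(\cX)$, namely set intersection $\cap$, to the meet of $\scrR(X)$, namely $\wedge = \cl(\Int(\cdot\cap\cdot))$. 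Therefore
\[
\supp{\cV_p} \wedge \supp{\ell(\alpha)} = \supp{\cV_p \cap \ell(\alpha)} = \supp{\varnothing} = \varnothing.
\]
Finally, Lemma~\ref{lem:equivregcl} asserts that for $A,A' \in \scrR(X)$ one has $A \wedge A' = \varnothing$ if and only if $A \cap \Int(A') = \varnothing$. Taking $A = \supp{\cV_p}$ and $A' = \supp{\ell(\alpha)}$ yields $\supp{\cV_p}\cap \Int\supp{\ell(\alpha)} = \varnothing$, which is the claim.

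The only point requiring care, rather than a genuine obstacle, is to keep track of which meet is in play under $\supp{\cdot}$: the set-theoretic intersection in $\sSet(\cX)$ becomes the regular-closed meet $\wedge$ in $\scrR(X)$, not set intersection, and it is precisely Lemma~\ref{lem:equivregcl} that bridges this $\wedge$ to the interior appearing in the statement. One should also note that $\supp{\cV_p}$ need not itself be a single grid element (it is a union of grid elements), so the argument genuinely uses the homomorphism property of $\supp{\cdot}$ rather than the atom-wise regular disjointness of Proposition~\ref{prop:grid-sub}(ii) directly, although the latter is what underlies why $\supp{\cdot}$ is Boolean in the first place.
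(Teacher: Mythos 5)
Your proof is correct and follows essentially the same route as the paper's: establish the set-theoretic disjointness $\cV_p \cap \ell(\alpha) = \varnothing$ in $\sSet(\cX)$, transfer it to $\supp{\cV_p}\wedge\supp{\ell(\alpha)} = \varnothing$ in $\scrR(X)$, and conclude via Lemma~\ref{lem:equivregcl}. The only (immaterial) difference is that you justify the middle step directly by the Boolean isomorphism of Corollary~\ref{cor:grid->lattice}, whereas the paper invokes the grid $\setof{\supp{\cV_p}\mid p\in\sP}$ furnished by Proposition~\ref{prop:Boolappl}; both justifications rest on the same underlying Boolean-homomorphism fact, and your closing remark about $\supp{\cV_p}$ not being a single grid element is accurate.
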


\begin{proof}
By \eqref{eqn:props-10} and \eqref{eq:Vp} we have that $\cV_p \cap l(\alpha) = \varnothing$.
Because $\setof{\cV_p\mid p\in\sP}$ is a grid for $X$ we obtain
\[
\supp{\cV_p} \wedge \supp{\ell(\alpha)} = \varnothing,\quad \forall  p\not\in \alpha \in \sO(\sP)
\]
which is equivalent to
$\supp{\cV_p} \cap \Int \supp{\ell(\alpha)} = \varnothing$
by Lemma~\ref{lem:equivregcl}.
\end{proof}

Because $\setof{\cV_p\mid p\in \sP}$ are atoms, $\supp{\cV_p}\wedge \supp{\cV_{p'}} = 0$ under the lattice operation of $\scrR(X)$.
Since in this lattice $\wedge \neq \cap$, we cannot conclude  that $\supp{\cV_p}\cap \supp{\cV_{p'}} = \varnothing$.  
More generally, since $\supp{\sB(\ell)}\colon \sO(\sP)\to \scrR(X)$ is a lattice homomorphism
$\supp{\ell(\gamma)} \wedge \supp{\ell(\alpha)} =  \supp{\ell(\gamma\cap \alpha)}$, but this does not imply that
$\supp{\ell(\gamma)} \cap \supp{\ell(\alpha)} =  \supp{\ell(\gamma\cap \alpha)}$ since $\supp{\cdot}\colon \IS^-(\cX,\cF)\to \sRNbhd(X,f)$ is not a lattice homomorphism. It is a homomorphism if we replace  $\sRNbhd(X,f)$ by $\sRNbhdR(X,f)$. 
In order to obtain results that hold in $\sRNbhd(X,f)$,
 we introduce the following concept.

\begin{definition}
Let $\sO(\sP)$ be a finite distributive lattice. A  lattice monomorphism $\ell\colon \sO(\sP) \to \sSet(\cX)$ is {\em well-separated} if
\begin{equation}
\label{eqn:props-11}
|\cV_p| \cap |\cV_{p'}| = \varnothing\quad\hbox{for all $p~\Vert~{p'}$},\quad p,p'\in\sP
\end{equation}
where the  $p~\Vert~{p'}$ indicates that $p$ and ${p'}$ are incomparable, i.e. $p\not\le {p'}$ and $p'\not \le {p}$.
\end{definition}

\begin{proposition}
\label{prop:well-sep}
If $\sO(\sP)$ is a finite distributive lattice and  $\ell\colon \sO(\sP) \to \sSet(\cX)$ is  well-separated, then
\begin{equation}
\label{eqn:well-sep}
\supp{\ell(\gamma)} \cap \supp{\ell(\alpha)} = \supp{\ell(\gamma)} \wedge \supp{\ell(\alpha)}.
\end{equation}
\end{proposition}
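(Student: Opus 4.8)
The plan is to reduce the claimed identity \eqref{eqn:well-sep} to the single statement that the honest set-theoretic intersection $\supp{\ell(\gamma)}\cap\supp{\ell(\alpha)}$ coincides with $\supp{\ell(\gamma\cap\alpha)}$, the evaluation of the combinatorial intersection. This reduction is essentially formal: since $\ell$ is a lattice monomorphism and the meet in both $\sO(\sP)$ and $\sSet(\cX)$ is ordinary intersection, one has $\ell(\gamma\cap\alpha)=\ell(\gamma)\cap\ell(\alpha)$ in $\sSet(\cX)$; and since $\supp{\cdot}\colon\sSet(\cX)\to\scrR(X)$ is a Boolean isomorphism onto its image by Corollary~\ref{cor:grid->lattice}, it carries $\cap$ to the meet $\wedge$ of $\scrR(X)$, giving $\supp{\ell(\gamma\cap\alpha)}=\supp{\ell(\gamma)}\wedge\supp{\ell(\alpha)}$. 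Hence proving \eqref{eqn:well-sep} amounts exactly to proving $\supp{\ell(\gamma)}\cap\supp{\ell(\alpha)}=\supp{\ell(\gamma\cap\alpha)}$.

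One inclusion will come for free: from $\ell(\gamma\cap\alpha)\subset\ell(\gamma)$ and $\ell(\gamma\cap\alpha)\subset\ell(\alpha)$ and the monotonicity of $\supp{\cdot}$ we get $\supp{\ell(\gamma\cap\alpha)}\subset\supp{\ell(\gamma)}\cap\supp{\ell(\alpha)}$. The real content is the reverse inclusion, and here I would use the atomic decomposition \eqref{eqn:props-10}. Writing $\supp{\ell(\gamma)}=\bigcup_{p\in\gamma}\supp{\cV_p}$ and $\supp{\ell(\alpha)}=\bigcup_{q\in\alpha}\supp{\cV_q}$ and distributing intersection over the unions yields
\[
\supp{\ell(\gamma)}\cap\supp{\ell(\alpha)}=\bigcup_{p\in\gamma,\,q\in\alpha}\bigl(\supp{\cV_p}\cap\supp{\cV_q}\bigr),
\]
so it suffices to show each term on the right lies in $\supp{\ell(\gamma\cap\alpha)}$.

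This is where well-separation does the work, and it is the crux of the argument. For an incomparable pair $p\Vert q$, the defining property \eqref{eqn:props-11} gives $\supp{\cV_p}\cap\supp{\cV_q}=\varnothing$, so such terms drop out. For a comparable pair the down-set structure absorbs the term: if $p\le q$, then $q\in\alpha$ forces $p\in\alpha$ because $\alpha$ is a down-set, and combined with $p\in\gamma$ this gives $p\in\gamma\cap\alpha$, whence $\supp{\cV_p}\cap\supp{\cV_q}\subset\supp{\cV_p}\subset\supp{\ell(\gamma\cap\alpha)}$; the case $q\le p$ (which includes $p=q$) is symmetric, using that $\gamma$ is a down-set. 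Thus every surviving term is contained in $\supp{\ell(\gamma\cap\alpha)}$, which yields the reverse inclusion and completes the proof.

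The main obstacle is conceptual rather than computational. Because $\supp{\cdot}$ is a homomorphism for the regular-closed meet $\wedge=\cl(\Int(\cdot\cap\cdot))$ and \emph{not} for $\cap$, distinct atoms with $\cV_p\cap\cV_q=\varnothing$ in $\cX$ can still have $\supp{\cV_p}\cap\supp{\cV_q}\neq\varnothing$ along shared boundaries in $X$; this is precisely the failure that would otherwise break the identity. Well-separation is the hypothesis engineered to remove these spurious boundary overlaps for incomparable indices, while comparable overlaps are harmless since the smaller index is automatically forced into $\gamma\cap\alpha$ by the down-set condition. Keeping straight which intersections live in $\cX$ versus $X$, and which meet ($\cap$ versus $\wedge$) is in play at each step, is the one place where care is required.
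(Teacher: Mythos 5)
Your proof is correct and follows essentially the same route as the paper's: both decompose $\supp{\ell(\gamma)}\cap\supp{\ell(\alpha)}$ into the union of atomic intersections $\supp{\cV_p}\cap\supp{\cV_q}$, use well-separation \eqref{eqn:props-11} to discard incomparable pairs, invoke the down-set property to place the smaller index of each comparable pair in $\gamma\cap\alpha$, and then identify $\supp{\ell(\gamma\cap\alpha)}$ with $\supp{\ell(\gamma)}\wedge\supp{\ell(\alpha)}$ via the homomorphism property of $\ell$ and Corollary~\ref{prop:grid->lattice}. Your framing as two inclusions rather than the paper's direct chain of equalities is only a cosmetic difference, and your closing remark about boundary overlaps correctly pinpoints why the hypothesis is needed.
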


\begin{proof}
Observe that
\[
\supp{\ell(\gamma)} \cap \supp{\ell(\alpha)}= \supp{ \bigcup_{p\in\gamma} \cV_p } \cap \supp{ \bigcup_{q\in\alpha} \cV_q  }
= \bigcup_{p\in\gamma\atop q\in\alpha} \left( \supp{\cV_p} \cap \supp{\cV_q} \right).
\]
By \eqref{eqn:props-11}, if $\supp{\cV_p} \cap \supp{\cV_q} \neq\varnothing$, then either $p\le q$ or $q\le p$.
Since $\gamma$ and $\alpha$ are down sets, this implies that $p\in\gamma\cap\alpha$ or
$q\in\gamma\cap\alpha$ respectively, and hence $\supp{\cV_p} \cap \supp{\cV_q} \subset \supp{\cV_r}$ for
some $r\in\gamma\cap\alpha$. 
Therefore
\begin{eqnarray*}
\supp{\ell(\gamma)} \cap \supp{\ell(\alpha)} &=& \bigcup_{p\in\gamma\atop q\in\alpha} \supp{\cV_p} \cap \supp{\cV_q}
=\bigcup_{r\in\gamma\cap\alpha} \supp{\cV_r} = \supp{ \bigcup_{r\in\gamma\cap\alpha}\cV_r }\\
&=&  \supp{\ell({\gamma\cap\alpha})} =  \supp{\ell(\gamma)\cap \ell(\alpha)}
= \supp{\ell(\gamma)} \wedge \supp{\ell(\alpha)}
\end{eqnarray*}
where the last two equalities follow from the fact that $\ell$ is a lattice homomorphism and
Corollary~\ref{prop:grid->lattice}, respectively.
\end{proof}

Let $\lambda\in\sO(\sP)$ which is a subposet of $\sP$. Note that $0\in\sO(\lambda)$. However, if $\lambda\neq\sP$, then $\sP\notin\sO(\lambda)$, and hence $\sO(\lambda)$ is not a sublattice of $\sO(\sP)$. 
Therefore we define $\lambda^\top$ to  be the poset $\lambda\cup\{\top\}$ where the 
additional top element $\top$ has relations $p\le\top$ for all $p\in\lambda$.
Observe that as a set $\top =\sP$.
Then
$$
\sO(\lambda^\top)\approx\{\alpha\in\sO(\sP)~|~\alpha\subset\lambda\;\hbox{or}\;\alpha=\sP\}
$$
making  $\sO(\lambda^\top)$  a sublattice of $\sO(\sP)$. Booleanization implies
$\sB(\sO(\lambda^\top))\subset\sB(\sO(\sP))=\sSet(\sP)$.

\begin{definition}
\label{defn:part-lift}
Let $\lambda\in \sO(\sP)$. 
A lattice homomorphism $\ell:\sO(\lambda^\top) \to \sH$ is 
a {\em partial lift of $g$ on $\sO(\lambda^\top)$} in Diagram \eqref{diag:lift12} if
\[
h(\ell(\beta)) = g(\beta)\;\;\hbox{for all $\beta \leq \lambda$.}
\] 
\end{definition}

Note that by the above definition $\ell(1) = 1$, since $\ell$ is a lattice homomorphism.


\subsection{Realization of attractor and repeller lattices}
\label{subset:lift-grid}

In this section,  using ideas from \cite{KMV-1a}, we prove that the lattice structures can be realized via multivalued maps.

\begin{theorem}
\label{thm:conv-latt-1}
Let $f\colon X \to X$ be a continuous mapping on a compact metric space $X$.
Let $\cF_n\colon  \cX_n \mvmap \cX_n$ be a convergent cofiltration of outer approximations for $f$ defined
on a contracting cofiltration of grids on $X$. 
If $\sA \subset \sAtt(X,f)$ is a finite sublattice, then there exists an $n_\sA$ such that for all $n\ge n_\sA$ there exists a lift $\ell_n\colon \sA \to \IS^+(\cX_n,\cF_n)$ of the inclusion map $\iota\colon \sA \rightarrowtail \sAtt(X,f)$ through $\omega(\supp{\cdot})\colon \IS^+(\cX_n,\cF_n) \to \sAtt(X,f)$, i.e.\ the following diagram commutes
\[
\begin{diagram}
\node{~} \node{\IS^+(\cX_n,\cF_n)} \arrow{s,r}{\omega(|\cdot|)}\\
\node{\sA}  \arrow{e,l,V}{\imath} \arrow{ne,l,..,V}{\ell_n}  \node{\sAtt(X,f).} 
\end{diagram}
\]
Furthermore, $\ell_n$ can be chosen such that $\supp{\ell_n(\sA)}$ is a sublattice of $\sANbhd(X,f)$.
\end{theorem}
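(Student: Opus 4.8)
The plan is to deduce the theorem from its repeller counterpart by duality, exactly as the paragraph preceding \eqref{diag:AR2b} anticipates. First I would establish the repeller analogue of the statement: for the finite sublattice $\sR := \sA^*\subset\sRep(X,f)$ (which is a sublattice because $^*$ is a lattice anti-isomorphism, cf.\ \eqref{diag:AR2b}) produce an $n_\sR$ so that for every $n\ge n_\sR$ there is a lift $\ell_n^-\colon \sR\to\IS^-(\cX_n,\cF_n)$ of the inclusion through $\alpha(\supp{\cdot})$, chosen \emph{well-separated} in the sense of the definition preceding Proposition~\ref{prop:well-sep}. Setting $n_\sA := n_\sR$, I would then define
\[
\ell_n(A) := \bigl(\ell_n^-(A^*)\bigr)^c \in \IS^+(\cX_n,\cF_n),
\]
the combinatorial complement (Proposition~\ref{prop:cms-forback}) of the repeller lift evaluated at the dual repeller $A^*$.

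That $\ell_n$ is a lattice monomorphism is immediate, since it is the composite of the anti-isomorphism $^*\colon\sA\to\sR$, the homomorphism $\ell_n^-$, and the anti-isomorphism $^c\colon\IS^-(\cX_n,\cF_n)\to\IS^+(\cX_n,\cF_n)$; the two order-reversals cancel and injectivity is inherited. Concretely, using De~Morgan for $^c$ together with $(A\vee A')^*=A^*\wedge A'^*$ one checks $\ell_n(A\vee A')=\ell_n(A^*)^c\cup\ell_n(A'^*)^c=\ell_n(A)\cup\ell_n(A')$, and dually for $\wedge$.

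For the lift property I would run the lower square of \eqref{diag:AR2}, whose commutativity reads $\alpha(U^\rc)=\omega(U)^*$ for $U\in\sANbhdR(X,f)$. Write $V:=\supp{\ell_n^-(A^*)}\in\sRNbhdR(X,f)$, so that the repeller lift gives $\alpha(V)=A^*$; and since $\supp{\cdot}$ is Boolean (Corollary~\ref{cor:grid->lattice}), $\supp{\ell_n(A)}=\supp{\ell_n^-(A^*)^c}=V^\rc\in\sANbhdR(X,f)$. Applying the square identity to $U:=V^\rc$ and using $V^{\rc\rc}=V$ yields $\omega(\supp{\ell_n(A)})^*=\alpha(V)=A^*$; applying the involution $^*$ once more gives $\omega(\supp{\ell_n(A)})=A$, which is precisely the commutativity $\omega(\supp{\cdot})\circ\ell_n=\imath$.

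Finally, for the ``furthermore'' clause I would invoke the complement anti-isomorphism $^c\colon\sRNbhd(X,f)\to\sANbhd(X,f)$ of \eqref{diag:AR2b}. The well-separated choice forces the set intersection on the image to agree with the regular-closed meet (Proposition~\ref{prop:well-sep}), so $\supp{\ell_n^-(\sR)}$ is a sublattice of $\sRNbhd(X,f)$; and because $\supp{\ell_n(A)}=\supp{\ell_n^-(A^*)}^\rc$ is exactly the image of $\supp{\ell_n^-(A^*)}$ under this anti-isomorphism, the family $\setof{\supp{\ell_n(A)}\mid A\in\sA}$ is the anti-isomorphic image of a sublattice, hence a sublattice of $\sANbhd(X,f)$. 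The main obstacle is not any of this bookkeeping but the repeller engine itself: constructing, for all large $n$, a well-separated lift $\ell_n^-$, i.e.\ realizing each join-irreducible difference $\cV_p$ by combinatorial repelling sets whose supports are genuinely disjoint for incomparable indices. This is where Birkhoff's theorem, the realization Corollary~\ref{cor:resolution1}, the cofiltration monotonicity of Proposition~\ref{prop:cofiltmap}, and the convergence estimate of Proposition~\ref{prop:conv11} must be combined, and where the threshold $n_\sR$ is produced.
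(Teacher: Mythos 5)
Your proposal follows the paper's own route exactly: the paper likewise declares a direct attractor argument unavailable, proves the repeller analogue (Theorem~\ref{thm:conv-latt-2}) by an inductive construction of well-separated partial lifts built from Birkhoff's representation theorem, Corollary~\ref{cor:resolution1}, and Proposition~\ref{prop:cofiltmap}, and then deduces Theorem~\ref{thm:conv-latt-1} by the same complementation/duality argument, which the paper outsources to the proof of \cite[Theorem 1.2]{KMV-1a} while you spell it out via the squares of \eqref{diag:AR2} and the anti-isomorphisms of \eqref{diag:AR2b}. Your diagnosis that the ``repeller engine'' (the well-separated lift $\ell_n^-$) is the genuine content, with the duality being bookkeeping, matches the paper's structure precisely.
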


We do not know of a direct proof of Theorem~\ref{thm:conv-latt-1}. 
The difficulty arises from the fact that $\wedge = \cap$ for the lattice $\IS^+(\cX_n,\cF_n)$, but $\wedge\neq \cap$ for the lattice $\sAtt(X,f)$.
Recall, however, that $\wedge = \cap$ for the lattice $\sRep(X,f)$.
With this in mind we prove the following analogous theorem for repellers. By the proof of \cite[Theorem 1.2]{KMV-1a} and in particular
\cite[commutative diagram (24)]{KMV-1a}, the Theorem~\ref{thm:conv-latt-2} for repellers implies Theorem~\ref{thm:conv-latt-1}
for attractors by a duality argument.

\begin{theorem}
\label{thm:conv-latt-2}
Let $f\colon X \to X$ be a continuous mapping on a compact metric space $X$.
Let $\cF_n\colon  \cX_n \mvmap \cX_n$ be a convergent  cofiltration of outer approximations for $f$ defined
on a contracting cofiltration of grids on $X$. 
If $\sR \subset \sRep(X,f)$ is a finite sublattice, then there exists an $n_\sR$ such that for all $n\ge n_\sR$ there exists a lift $\ell_n\colon \sR \to \IS^-(\cX_n,\cF_n)$ of the inclusion map $\iota\colon \sR \rightarrowtail \sRep(X,f)$ through $\alpha(\supp{\cdot})\colon \IS^-(\cX_n,\cF_n) \twoheadrightarrow \sRep(X,f)$, i.e.\ the following diagram commutes
\begin{equation}
\label{eq:repellerliftdiagram}
\begin{diagram}
\node{~} \node{\IS^-(\cX_n,\cF_n)} \arrow{s,r}{\alpha(|\cdot|)}\\
\node{\sR}  \arrow{e,l,V}{\imath} \arrow{ne,l,..,V}{\ell_n}  \node{\sRep(X,f).} 
\end{diagram}
\end{equation}
Furthermore, $\ell_n$ can be chosen such that $\supp{\ell_n(\sR)}$ is a sublattice of $\sRNbhd(X,f)$.
\end{theorem}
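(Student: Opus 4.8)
The plan is to imitate the construction behind \cite[Theorem 1.2]{KMV-1a}, but to push it all the way down into $\IS^-(\cX_n,\cF_n)$, exploiting the fact that $\wedge=\cap$ in both $\sRep(X,f)$ and $\IS^-(\cX_n,\cF_n)$. First I would use Birkhoff's representation theorem to fix a poset $\sP\cong\sJ(\sR)$ and an isomorphism $\sR\cong\sO(\sP)$, writing $R_\alpha$ for the repeller corresponding to a down-set $\alpha\in\sO(\sP)$; a lift is then a lattice monomorphism $\ell_n\colon\sO(\sP)\to\IS^-(\cX_n,\cF_n)$ with $\alpha\bigl(\supp{\ell_n(\alpha)}\bigr)=R_\alpha$ for all $\alpha$. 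Because $\alpha\mapsto R_\alpha$ is already injective, any monotone assignment satisfying this identity is automatically a monomorphism, so injectivity comes for free and the work concentrates in three points: (i) landing in $\IS^-$, (ii) the identity $\alpha\bigl(\supp{\ell_n(\alpha)}\bigr)=R_\alpha$, and (iii) preservation of $\wedge=\cap$.

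Next I would erect the continuous scaffolding: transporting \cite[Theorem 1.2]{KMV-1a} across the duality \eqref{diag:AR2b} yields a lift $W\colon\sO(\sP)\to\sRNbhd(X,f)$ of $\imath$ through $\alpha$, and I would refine it so that the atoms $N_p:=W(\down p)\setminus W(\down p\setminus\{p\})$ are mutually regularly disjoint whenever $p$ and $p'$ are incomparable. The candidate lift is then $\ell_n(\alpha):=\balpha\bigl(\cov_{\cX_n}(W(\alpha))\bigr)$. Each such set is a combinatorial repeller, hence backward invariant, so it lies in $\IS^-(\cX_n,\cF_n)$, settling (i); Proposition~\ref{prop:conv-latt-3}, Corollary~\ref{cor:resolution1} and the repeller form of Lemma~\ref{lem:commute1} give $\alpha\bigl(\supp{\ell_n(\alpha)}\bigr)=R_\alpha$ for $n$ large, settling (ii). The join case $R_{\alpha\cup\beta}=R_\alpha\cup R_\beta$ is immediate, since $W$, $\cov_{\cX_n}$ and $\balpha$ all commute with $\cup$.

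The substance is (iii). Here $W(\alpha\cap\beta)=W(\alpha)\cap W(\beta)$ because $W$ is a homomorphism into $\sRNbhd(X,f)$, but $\cov_{\cX_n}$ and $\balpha$ only give $\ell_n(\alpha\cap\beta)\subseteq\ell_n(\alpha)\cap\ell_n(\beta)$ in general. To force equality I would use the regular disjointness of the atoms $N_p$: for a grid fine enough relative to the finitely many distances $\dist(N_p,N_{p'})$ for incomparable $p,p'$ and to the separations $\dist(R_\gamma,R_\gamma^*)$ supplied by Corollary~\ref{cor:resolution1}, the covers of $W(\alpha)$ and $W(\beta)$ cannot overlap outside the cover of the genuine intersection, so the combinatorial atoms $\cV_p$ of \eqref{eq:Vp} become \emph{well-separated}, $\supp{\cV_p}\cap\supp{\cV_{p'}}=\varnothing$ for $p\parallel p'$. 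Disjointness of the atoms makes the meet identity $\ell_n(\alpha)\cap\ell_n(\beta)=\ell_n(\alpha\cap\beta)$ automatic in $\sSet(\cX_n)$, and Proposition~\ref{prop:well-sep} upgrades this to the statement that $\supp{\ell_n(\sR)}$ is a sublattice of $\sRNbhd(X,f)$, which is the ``furthermore'' clause.

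Each of these requirements—backward invariance realizing $R_\alpha$, and well-separation of a given incomparable pair—holds only beyond some refinement threshold, and a priori the thresholds for different join-irreducibles need not be compatible. This is exactly where the hypothesis of a \emph{contracting cofiltration} is essential rather than a mere convergent sequence: by Proposition~\ref{prop:cofiltmap} backward invariance of a set of fixed support, once achieved at level $n$, persists at every finer level $m>n$, so I may take $n_\sR$ to be the maximum of the finitely many thresholds and have all properties hold simultaneously for every $n\ge n_\sR$. I expect the main obstacle to be precisely this reconciliation of the meet identity with genuine membership in $\IS^-$, uniformly across the whole poset: the operators $\cov_{\cX_n}$ and $\balpha$ that manufacture backward-invariant objects are not $\cap$-homomorphisms, so the equality $\ell_n(\alpha)\cap\ell_n(\beta)=\ell_n(\alpha\cap\beta)$ must be engineered geometrically through well-separation while the single index $n_\sR$ is extracted from the cofiltration. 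Once Theorem~\ref{thm:conv-latt-2} is proved, Theorem~\ref{thm:conv-latt-1} follows formally by applying the anti-isomorphisms ${}^c$ and ${}^*$ of \eqref{diag:AR2b}.
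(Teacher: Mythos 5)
Your scaffolding (Birkhoff representation, well-separated atoms, Corollary~\ref{cor:resolution1}, and Proposition~\ref{prop:cofiltmap} to reconcile thresholds along the cofiltration) matches the paper's ingredients, but your central mechanism has a genuine gap at step (iii), meet preservation. Your candidate lift $\ell_n(\alpha):=\balpha\bigl(\cov_{\cX_n}(W(\alpha))\bigr)$ composes $\balpha$ with the cover operator, and while the geometric separation argument does yield the cover-level identity $\cov_{\cX_n}(W(\alpha))\cap\cov_{\cX_n}(W(\beta))=\cov_{\cX_n}(W(\alpha\cap\beta))$ for fine grids (provided the closures of incomparable atoms are genuinely disjoint at positive distance --- regular disjointness is too weak here, since regularly disjoint closed sets can share boundary points that a single fine grid element then meets), this identity does not survive the application of $\balpha$. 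By the dual of Proposition~\ref{att-rep-aset}, $\balpha$ is a lattice epimorphism onto $\sRep(\cX_n,\cF_n)$, whose meet is $\balpha(\cdot\cap\cdot)$, \emph{not} $\cap$; so what you get is $\balpha(\cU\cap\cV)=\balpha\bigl(\balpha(\cU)\cap\balpha(\cV)\bigr)$, which can be a proper subset of $\balpha(\cU)\cap\balpha(\cV)$. Concretely, an index $\eta$ can lie in $\balpha(\cov_{\cX_n}(W(\alpha)))\cap\balpha(\cov_{\cX_n}(W(\beta)))$ because its forward $\cF_n$-images meet the two covers at \emph{different} times, with no forward image ever meeting the cover of the intersection; nothing in the separation of the supports rules this out, since membership in $\balpha(\cdot)$ is a dynamical condition, not a support condition. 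Your proposed repair --- ``disjointness of the atoms makes the meet identity automatic'' --- is circular: the atoms $\cV_p$ of \eqref{eq:Vp} are well defined (independent of the choice of $\beta,\gamma$ with $\gamma\setminus\beta=\setof{p}$) only after $\ell_n$ is known to be a lattice homomorphism, which is exactly what is being proved. This is the same asymmetry flagged in Remark~\ref{rmk:other2}: evaluation-type maps out of the repeller/attractor lattices respect $\cup$ but not $\cap$.

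The paper's proof is engineered precisely to avoid applying $\balpha$ to every $W(\alpha)$. It processes the join-irreducibles one at a time along a linear extension of $\sP$: at each stage the \emph{new atom} is defined as a set difference $\cV_{q}:=\cR_n(\mu)\setminus\cU(\lambda)$, where $\cR_n(\mu)$ is the discrete repeller from Corollary~\ref{cor:resolution1} and $\cU(\lambda)$ is the previously built partial lift transported to the finer grid via Proposition~\ref{prop:cofiltmap}; the lift is then \emph{defined} as $\cU(\alpha)=\bigcup_{p\in\alpha}\cV_p$, so pairwise disjointness of the atoms (condition {\bf C1}) makes $\cap$-preservation automatic by construction, and backward invariance follows because $\cU(\mu)=\cR_n(\mu)\cup\cU(\lambda\cap\mu)$ is a union of a combinatorial repeller and a transported element of $\IS^-$. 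The separation conditions you envisaged ((i)$'$, (ii)$'$ in the paper, and {\bf C3} for the final $\sRNbhd(X,f)$ claim via Proposition~\ref{prop:well-sep}) enter there, but inductively: the admissible tolerance $d_\lambda$ at stage $k+1$ depends on the partial lift already fixed at stage $k$, which is why the cofiltration hypothesis is used exactly where you predicted, yet the global formula $\balpha\circ\cov_{\cX_n}\circ W$ cannot replace the stage-by-stage construction. To salvage your write-up, replace the definition of $\ell_n$ by this inductive atom construction; the rest of your outline (duality for Theorem~\ref{thm:conv-latt-1}, joins, realization of $R_\alpha$, well-separation for the ``furthermore'' clause) then goes through essentially as you describe.
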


\begin{proof}
Observe that by Remark~\ref{rem:birkhoff} to prove \eqref{eq:repellerliftdiagram} it is sufficient to prove the existence of $\cU_n$ such that the following diagram commutes
\begin{equation}
\label{eq:diagramproof}
\begin{diagram}
\node{~} \node{\IS^-(\cX_n,\cF_n)} \arrow{s,r,A}{\alpha(|\cdot|)}  \arrow{e,l,J}{}\node{\sSet(\cX_n)}\\
\node{\sO(\sP)} \arrow{ne,l,..,V}{\cU_n} \arrow{e,l,V}{R_n}  \node{\sR_n} \arrow{e,J}\node{\sRep(X,f)} 
\end{diagram}
\end{equation}
where $\sR_n := \alpha\left(\supp{\IS^-(\cX_n,\cF_n)} \right)$.
Viewing $\cU_n$ as a map into $\sSet(\cX_n)$, by Proposition~\ref{prop:Boolappl} we obtain a grid $\setof{\supp{\cV_{n,p}}\mid p\in \sP}$ for $X$.

Observe that if $\alpha, \alpha'\in \sO(\sP)$ satisfy $\alpha\cap\alpha' = \varnothing$, then 
$R_n(\alpha)\cap R_n(\alpha')=\varnothing$, and hence there exists $d_\varnothing >0$ such that 
\begin{equation}
\label{eq:sepRep}
B_{d_\varnothing}(R_n(\alpha)) \cap B_{d_\varnothing}(R_n(\alpha')) = \varnothing.
\end{equation}
Since $\sO(\sP)$ is finite, we can choose $d_\varnothing >0$ such that \eqref{eq:sepRep} is satisfied for
all $\alpha, \alpha'\in \sO(\sP)$ satisfying $\alpha\cap\alpha' = \varnothing$.
By Corollary~\ref{cor:resolution1} there exists $n_{d_\varnothing}>0$  such that for each $\alpha\in \sO(\sP)$ there is an associated  discrete repeller $\cR_n(\alpha)\in \IS^-(\cX_n,\cF_n)$ satisfying
\[
R_n(\alpha) \subset |\cR_n(\alpha)|\le B_{d_\varnothing}(R_n(\alpha))
\]
for all $n\ge n_{d_\varnothing}$.

Having established the necessary notation we provide a proof by induction making use of partial lifts.
To establish the initial induction step choose $n\ge n_{d_\varnothing}$ and let $q\in \sP$ be minimal.
Observe that $\setof{q}\in \sO(\sP)$.
Define 
\[
\cU_n(\setof{q}) := \cR_n(\setof{q}).
\]
Observe that $\cU_n\colon \sO(\setof{q}^\top)\to \IS^-(\cX_n,\cF_n)$ defines a partial lift of $R_n$ through
$\alpha(\supp{\cdot})$ that satisfies the following three conditions:
\begin{description}
\item[C1] $\cV_{n,p}\cap\cV_{n,p'} = \varnothing$ for all $p\not = {p'}$;
\item[C2] $\supp{\cV_{n,p}} \cap R_n(\alpha) = \varnothing$ if $p \not\in\alpha$;
\item[C3] $\supp{\cV_{n,p}} \cap \supp{\cV_{n,p'}} = \varnothing$ for all $p~\Vert~{p'}$,
\end{description}
where $p,p'\in \setof{q}, \alpha\in\sO(\sP),$ and $\cV_{n,p}$ is defined by \eqref{eq:Vp}.

Assume that for some $\lambda \in \sO(\sP)$ and some $n_\lambda\ge  n_{d_\varnothing}$ there exists a partial lift  $\cU_{n_\lambda}\colon \sO(\lambda^\top)\to  \IS^-(\cX_{n_\lambda},\cF_n)$ of $R_n$ through
$\alpha(\supp{\cdot})$ which satisfies Conditions {\bf C1} - {\bf C3} for $p,p'\in \lambda$ and $\alpha\in\sO(\sP).$
Furthermore, given $\cU_{n_\lambda}$ \eqref{eq:Vp} defines $\setof{\cV_{n_\lambda,p}\mid p\in \sP}$.
We now show that for  $n$  sufficiently large  a new partial lift can be constructed on a down set in $\sO(\sP)$ with one additional element.

Let $q\in \sP\setminus \lambda$ be minimal. 
Define $\mu = \down q$. 
By condition {\bf C2}, 
\begin{enumerate}
\item [(i)] if $p\in \lambda\setminus \mu$ then $\supp{\cV_{n_\lambda,p}} \cap R_{n_\lambda}(\mu) = \varnothing$.
\end{enumerate}
Since $R_{n_\lambda}$ is a lattice homomorphism, and $\cU_{n_\lambda}$ is a partial lift,
\begin{enumerate}
\item [(ii)] if $\mu\cap \alpha\subset \lambda$, i.e.\ if $q\not\in\alpha\in\sO(\sP)$, then
\[
R_{n_\lambda}(\mu) \cap R_{n_\lambda}(\alpha) = R_{n_\lambda}({\mu\cap \alpha})\subset R_{n_\lambda}(\lambda) \subset \Int |\cU_{n_\lambda}(\lambda)|.
\]
\end{enumerate}

Property (i) implies that there exists a $d_\lambda>0$ such that $\supp{\cV_{n_\lambda,p}} \cap B_{d_\lambda}(R_{n_\lambda}(\mu)) =\varnothing$
for all $p\in \lambda\setminus \mu$.
Property (ii) is equivalent to $\bigl( R_{n_\lambda}(\mu) \setminus \Int |\cU_{n_\lambda}(\lambda)|\bigr) \cap R_{n_\lambda}(\alpha)  = \varnothing$, which
implies that if $q\not\in\alpha$, then $\cl\bigl( R_{n_\lambda}(\mu)\setminus |\cU_{n_\lambda}(\lambda)|\bigr) \cap R_{n_\lambda}(\alpha) = \varnothing$.
We can therefore choose $d_\lambda$ small enough such that 
\begin{enumerate}
\item [(i)']  if $p\in \lambda\setminus \mu$, then $\supp{\cV_{n_\lambda,p}} \cap B_{d_\lambda}(R_{n_\lambda}(\mu)) = \varnothing$, and
\item [(ii)']  if $q\not\in\alpha\in\sO(\sP)$, then $\cl \bigl(B_{d_\lambda}(R_{n_\lambda}(\mu)) \setminus \supp{\cU_{n_\lambda}(\lambda)}\bigr) \cap R_{n_\lambda}(\alpha)  = \varnothing$.
\end{enumerate}
Observe that throughout this discussion we have been working with $n = n_\lambda$ and thus the fixed evaluation map $\supp{\cdot} = \supp{\cdot}_{n_\lambda} \colon \cX_{n_\lambda} \to \scrR(X)$.
Now we must change $n$, and thus the evaluation map also changes. Whenever the choice of evaluation map is clear, we continue to denote it by $\supp{\cdot}$.

By Corollary~\ref{cor:resolution1}  we can choose $n_{d_\lambda} \ge n_\lambda$ such that for any $n\ge n_{d_\lambda}$ the repeller $\cR_{n}(\mu)$ guaranteed by this corollary  satisfies
$R_n(\mu) \subset \supp{\cR_{n}(\mu)}_n \subset B_{d_\lambda}(R_n(\mu))$ .
Observe that this in turn implies that  for all $n\ge n_{d_\lambda}$
\begin{enumerate}
\item [(i)"]  if $p\in \lambda\setminus \mu$, then $\supp{\cV_{n_\lambda,p}}_{n_\lambda}  \cap \supp{\cR_n(\mu)}_n = \varnothing$, and
\item [(ii)"]  if $q\not\in\alpha\in\sO(\sP)$, then $\cl \bigl(\supp{\cR_n(\mu)}_n \setminus \supp{\cU_{n_\lambda}(\lambda)}_{n_\lambda} \bigr) \cap R_n(\alpha)  = \varnothing$.
\end{enumerate}

Recall that for $n\ge n_{\lambda}$, $\cX_n$ is a refinement of $\cX_{n_\lambda}$.  Thus for each $n$ there exists unique
sets $\cV^n_{n_\lambda,p}, \cU^n_{n_\lambda}(\alpha) \subset \cX_{n}$ such that
\begin{equation}
\label{eq:defnVU}
\supp{\cV^n_{n_\lambda,p}}_n = \supp{\cV_{n_\lambda,p}}_{n_\lambda}
\quad\text{and}\quad
\supp{\cU^n_{n_\lambda}(\alpha)}_n = \supp{\cU_{n_\lambda}(\alpha)}_{n_\lambda}. 
\end{equation}
By assumption $\cU_{n_\lambda}(\lambda)\in \IS^-(\cX_{n_\lambda},\cF_{n_\lambda})$.
By Proposition~\ref{prop:cofiltmap} $\cU^n_{n_\lambda}(\lambda)\in \IS^-(\cX_{n},\cF_n)$.

Let $n_{\mu\cup\lambda}\ge n_{d_\lambda}$.  
Then $\cU^{n_{\mu\cup\lambda}}_{n_\lambda}\colon \sO(\lambda^\top)\to \sInvset^{-}(\cX_{n_{\mu\cup\lambda}},\cF_{n_{\mu\cup\lambda}})$ is a partial lift of $R_{n_{\mu\cup\lambda}}$ through $\alpha(\supp{\cdot})\colon \sInvset^{-}(\cX_{n_{\mu\cup\lambda}},\cF_{n_{\mu\cup\lambda}})\to \sR_{n_{\mu\cup\lambda}}$.
To complete the induction step we must show that this partial lift can be extended to $\sO((\lambda\cup\mu)^\top)$.
\vskip 6pt
\noindent {\em Claim:} This partial lift can be extended  to $\sO((\lambda\cup\mu)^\top)$ via the following definition.
\vskip 6pt\noindent
Given $\alpha\in\lambda\cup\mu$ define
$\cU_{n_{\mu\cup\lambda}}\colon \sO((\lambda\cup\mu)^\top) \to  \IS^-(\cX_{n_{\mu\cup\lambda}},\cF_{n_{\mu\cup\lambda}})$ by
\begin{equation}\label{eqn:liftk}
\cU_{n_{\mu\cup\lambda}}(\alpha)=\bigcup_{p\in\alpha}\cV_{n_{\mu\cup\lambda},p}
\end{equation}
where 
\begin{equation}
\label{eqn:defnext}
\cV_{n_{\mu\cup\lambda},q} := \cR_{n_{\mu\cup\lambda}}(\mu)\setminus \cU^{n_{\mu\cup\lambda}}_{n_\lambda}(\lambda)
\quad\text{and}\quad
\cV_{n_{\mu\cup\lambda},p} := \cV^{n_{\mu\cup\lambda}}_{n_\lambda,p}\ \text{for}\ p\in\lambda.
\end{equation}

Our induction hypothesis and the proof of the claim makes use of conditions {\bf C1} - {\bf C3}, thus we begin by verifying that they are satisfied.
By the induction hypothesis to prove {\bf C1} for all $p,p' \subset \lambda \cup \mu$ it is sufficient to show that
$\cV_{n_{\mu\cup\lambda},q} \cap \cV_{n_{\mu\cup\lambda},p} = \varnothing$ for $p\in\lambda$. 
This follows from the fact that
\[
\cV_{n_{\mu\cup\lambda},p} = \cV^{n_{\mu\cup\lambda}}_{n_\lambda,p} \subset \cU^{n_{\mu\cup\lambda}}_{n_\lambda}(\lambda)
\] 
for all $p\in \lambda$.
To prove Condition {\bf C2} observe that by definition 
\[
\supp{\cV_{n_{\mu\cup\lambda},q}} = \supp{\cR_{n_{\mu\cup\lambda}}(\mu)\setminus \cU^{n_{\mu\cup\lambda}}_{n_\lambda}(\lambda)} = \cl \bigl( \supp{\cR_{n_{\mu\cup\lambda}}(\mu)} \setminus |\cU_{n_{\mu\cup\lambda}}(\lambda)|\bigr)
\]
where the latter equality follows from Lemma~\ref{lem:intregdisj}, and then apply (ii)''.
Turning to Condition {\bf C3}, by definition 
$\cV_{n_{\mu\cup\lambda},q} \subset \cR_{n_{\mu\cup\lambda}}(\mu)$
and thus by (i)", \eqref{eq:defnVU}, and \eqref{eqn:defnext} we have that 
$\supp{\cV_{n_{\mu\cup\lambda},p}}\cap \supp{\cV_{n_\lambda,q}} = \varnothing$ for
all $p\in \lambda \setminus \mu$. 
Let $p\in \lambda\cup \mu$. Note that $p\le q$ if and only if $p\in \mu$, and thus $p\not \le q$ if and only if 
$p\in  (\lambda\cup \mu)\setminus \mu$.
Moreover, $q\le p$ if and only if $p=q$, and thus $q\not \le p$ if and only if $p\not = q$.
We conclude that $p~\Vert~q$ if and only if $p\in \lambda \setminus \mu$, which proves {\bf C3} for all $p,p'\in \lambda \cup \mu$ satisfying $p~\Vert~p'$.

To prove the claim we need to verify four statements:
\begin{enumerate}
\item  $\cU_{n_{\mu\cup\lambda}}$ is an extension of $\cU_{n_\lambda}$,
\item  $\cU_{n_{\mu\cup\lambda}}$ maps into $\sInvset^{-}(\cX_{n_{\mu\cup\lambda}},\cF_{n_{\mu\cup\lambda}})$, 
\item $\cU_{n_{\mu\cup\lambda}}$ is a lattice homomorphism, and 
\item $\cU_{n_{\mu\cup\lambda}}$ is a partial lift of $R_{n_{\mu\cup\lambda}}$.
\end{enumerate}

To prove the first statement observe that each $\alpha\subset\lambda\cup\mu$ can be expressed as $\alpha=\beta\cup\nu$ for $\beta\subset\lambda$ and $\nu=\varnothing$ or $\nu=\mu$.  
If $\nu=\varnothing$, then $\cU_{n_{\mu\cup\lambda}}(\alpha)=\cU_{n_\lambda}(\beta)$,
and if $\nu=\mu$, then $\cU_{n_{\mu\cup\lambda}}(\alpha)=\cU_{n_\lambda}(\beta) \cup \cU_{n_{\mu\cup\lambda}}(\mu)$.  The second statement follows from the fact that $\vee = \cup $ as the lattice operation in 
$\sInvset^{-}(\cX_{n_{\mu\cup\lambda}},\cF_{n_{\mu\cup\lambda}})$.  The third statement follows from \eqref{eqn:liftk} and {\bf C1}.
See \cite[Theorem 4.8 {\em Proof of (a)}]{KMV-1a} for details.
To demonstrate the fourth statement, note that
\begin{eqnarray*}
\cU_{n_{\mu\cup\lambda}}(\mu) &=&
\cV_{n_{\mu\cup\lambda},q}\cup \left( \bigcup_{p\in\lambda\cap\mu}\cV_{n_{\mu\cup\lambda},p}\right) \\
&=&
\left( \cR_{n_{\mu\cup\lambda}}(\mu)\setminus \left( \bigcup_{p\in\lambda}\cV_{n_{\mu\cup\lambda},p} \right) \right)
\cup \left( \bigcup_{p\in\lambda\cap\mu}\cV_{n_{\mu\cup\lambda},p} \right)\\
&=&\left( \cR_{n_{\mu\cup\lambda}}(\mu)\setminus \left( \bigcup_{p\in\lambda\cap\mu}\cV_{n_{\mu\cup\lambda},p} \right) \right)\cup\left( \bigcup_{p\in\lambda\cap\mu}\cV_{n_{\mu\cup\lambda},p} \right)\\
&=&\cR_{n_{\mu\cup\lambda}}(\mu)\cup \left( \bigcup_{p\in\lambda\cap\mu}\cV_{n_{\mu\cup\lambda},p} \right) \\
&=& \cR_{n_{\mu\cup\lambda}}(\mu)\cup \cU_{n_{\mu\cup\lambda}}({\lambda\cap\mu}).
\end{eqnarray*}
Therefore, 
\begin{eqnarray*}
\alpha(\supp{\cU_{n_{\mu\cup\lambda}}(\mu)}) &=&
\alpha( \supp{\cR_{n_{\mu\cup\lambda}}(\mu)}) \cup \alpha(\supp{\cU_{n_{\mu\cup\lambda}}({\lambda\cap\mu})})\\
&=&R_{n_{\mu\cup\lambda}}(\mu)\cup R_{n_{\mu\cup\lambda}}({\lambda\cap\mu})=R_{n_{\mu\cup\lambda}}(\mu),
\end{eqnarray*}
and thus $\cU_{n_{\mu\cup\lambda}}$ is a partial lift of $R_{n_{\mu\cup\lambda}}$.
 
We have now proved the claim and completed the induction step.  
Since the lattice of repellers $\sR$ is finite, a finite application of the induction argument gives rise to the commutative diagram \eqref{eq:diagramproof} and hence diagram \eqref{eq:repellerliftdiagram}.

The commutative diagram \eqref{diag:comm-liftfull} guarantees that $\supp{\ell_n(\cdot)}\colon \sR \to \sRNbhdR(X,f)$ or equivalently  that $\supp{\ell_n}$ can be viewed as a lift  of the embedding $\sR \to \sRep(X,f)$ through $\alpha\colon \sRNbhdR(X,f) \to \sRep(X,f)$.
The careful reader will note that Condition {\bf C3}  has not yet been used.  
{\bf C3} implies that $\cU_n$ is a well-separated lift.
The definition of well-separated guarantees that $\supp{\ell_n}(\sR)$ is a sublattice in $\sRNbhd(X,f)$,
which is essential for the final claim of the theorem.  Observe that this implies that $\supp{\ell_n}$ can be viewed as a lift  of the embedding $\sR \to \sRep(X,f)$ through $\alpha\colon \sRNbhd(X,f) \to \sRep(X,f)$.
\end{proof}

Theorem~\ref{thm:conv-latt-1} indicates that given a convergent cofiltration of multivalued maps obtained by refinement that the lattice structure of attractors can be realized as a lift to $\sInvset^+(\cX,\cF)$.  
The following theorem shows that a similar result holds if one works with an arbitrary convergent sequence of multivalued maps.
Note that since $\sInvset^+(\cX,\cF)\subset \sASet(\cX,\cF)$, the conclusion of this theorem is weaker than that of Theorem~\ref{thm:conv-latt-1}.

\begin{theorem}
\label{thm:conv-latt-3}
Let $f\colon X \to X$ be a continuous mapping on a
compact metric space $X$.
Let 
$\cF_n\colon  \cX_n \mvmap \cX_n$ be a convergent sequence of outer approximations.
Then for every finite sublattice $\sA \subset \sAtt(X,f)$ there exists an $n_\sA$ such that for all $n\ge n_\sA$
there exists a lift  $\ell_n\colon  \sA \to \sASet(\cX_n,\cF_n)$ of the inclusion map $\iota\colon \sA \rightarrowtail \sAtt(X,f)$ through $\omega(\supp{\cdot})\colon \sASet(\cX_n,\cF_n) \to \sAtt(X,f)$, i.e.\ the following diagram commutes
\[
\begin{diagram}
\node{~} \node{\sASet(\cX_n,\cF_n)} \arrow{s,r}{\omega(|\cdot|)}\\
\node{\sA}  \arrow{e,l,V}{\imath} \arrow{ne,l,..,V}{\ell_n}  \node{\sAtt(X,f).} 
\end{diagram}
\]
Furthermore, $\ell_n$ can be chosen such that $\supp{\ell_n(\sA)}$ is a sublattice of $\sANbhd(X,f)$.
Similar statements hold for finite sublattices  $\sR \subset \sRSet(X,f)$.
\end{theorem}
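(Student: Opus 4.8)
The plan is to reduce the claim to its dual for repellers, then prove that by discretizing the continuous lift already available from \cite{KMV-1a}. Precisely, I would first establish the repeller analogue (Theorem~\ref{thm:lift}): for a finite sublattice $\sR \subset \sRep(X,f)$ there is an $n_\sR$ so that for all $n \ge n_\sR$ a lift $\ell_n \colon \sR \to \sRSet(\cX_n,\cF_n)$ of the inclusion through $\alpha(\supp{\cdot})$ exists with $\supp{\ell_n(\sR)}$ a sublattice of $\sRNbhd(X,f)$. Granting this, the attractor statement follows by exactly the duality argument that derives Theorem~\ref{thm:conv-latt-1} from Theorem~\ref{thm:conv-latt-2}: pass to the dual lattice $\sA^* \subset \sRep(X,f)$, apply the repeller result, and transport the resulting lift back through the complementation and duality anti-isomorphisms of Diagram~\eqref{diag:comm-liftfull}; the analogous statement for $\sR \subset \sRSet(X,f)$ comes from the same construction.

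For the repeller analogue the key idea is that, because the target is the flexible lattice $\sRSet(\cX_n,\cF_n)$ rather than $\IS^-(\cX_n,\cF_n)$, no cofiltration is needed and the lift can be produced by discretizing a fixed continuous lift at a single grid scale. I would invoke the dual of Theorem~\ref{thm:kmv1}, whose construction in \cite{KMV-1a} yields a \emph{well-separated} continuous lift $k \colon \sR \to \sRNbhd(X,f)$ of the inclusion through $\alpha$, with $\alpha(k(R)) = R$ for all $R$. Writing $\sR \cong \sO(\sP)$ via Birkhoff, the associated atoms $N_p := k(\gamma) \setminus k(\beta)$ from \eqref{eqn:inatom} are regular closed, satisfy $k(R) = \bigcup_{p \in \alpha} N_p$ on the corresponding down-set $\alpha$, and are genuinely disjoint for incomparable indices; since $X$ is compact these incomparable atoms lie at a uniform positive distance $d_0 > 0$. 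I would then simply define
\[
\ell_n := \cov_{\cX_n} \circ k, \qquad \ell_n(R) = \cov_{\cX_n}(k(R)).
\]

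Three facts must be checked for $n$ large. First, since $k(R) \in \sRNbhd(X,f)$, Proposition~\ref{prop:conv-latt-3} gives $\ell_n(R) \in \sRSet(\cX_n,\cF_n)$, and the realization estimates behind Corollary~\ref{cor:resolution1}, together with the robustness of repelling neighborhoods from \cite{KMV-1a}, give $\alpha(\supp{\ell_n(R)}) = R$, so that $\ell_n$ lifts the inclusion. Second, $\ell_n$ preserves joins trivially because $\cov_{\cX_n}$ turns unions into unions. Third, and this is the crux, $\ell_n$ must preserve meets: one needs $\cov_{\cX_n}(k(R)) \cap \cov_{\cX_n}(k(R')) = \cov_{\cX_n}(k(R) \cap k(R'))$, where $k(R) \cap k(R') = k(R \wedge R')$. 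The inclusion $\supseteq$ is automatic; for $\subseteq$ I would use the well-separation together with Proposition~\ref{prop:conv11} to choose $n_\sR$ with $\diam(\cX_n) < d_0$ for all $n \ge n_\sR$. Then a grid element meeting both $k(R)$ and $k(R')$ cannot bridge two incomparable atoms, so it must meet some $N_p$ with $p$ lying in both down-sets, whence it meets $k(R) \cap k(R')$; this is exactly the mechanism of Proposition~\ref{prop:well-sep}. The same well-separation shows $\supp{\ell_n(\sR)}$ is a sublattice of $\sRNbhd(X,f)$ and not merely of $\sRNbhdR(X,f)$.

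The main obstacle is this meet-preservation: $\cov_{\cX_n}$ generically destroys intersections, and the only thing that rescues it is the positive separation of incomparable atoms supplied by the well-separated continuous lift, which must therefore be secured from \cite{KMV-1a} before discretizing. The conceptual reason the conclusion is $\sRSet$ rather than $\IS^-$ is the same: $\cov_{\cX_n}(\,\cdot\,)$ produces a repelling set by Proposition~\ref{prop:conv-latt-3} but not a backward invariant set, and with only a convergent sequence of grids there is no refinement mechanism---such as Proposition~\ref{prop:cofiltmap} in the cofiltration setting of Theorem~\ref{thm:conv-latt-2}---available to upgrade the repelling sets to backward invariant ones. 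Once the three facts above are verified at a single scale $n \ge n_\sR$, $\ell_n$ is the desired well-separated lift, and the duality step of the first paragraph completes the proof of Theorem~\ref{thm:conv-latt-3}.
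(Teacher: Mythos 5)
Your proposal is correct in its overall architecture, which matches the paper's: prove the repeller version first, land in $\sRSet(\cX_n,\cF_n)$ rather than $\IS^-(\cX_n,\cF_n)$ precisely because Proposition~\ref{prop:conv-latt-3} applies to covers of repelling neighborhoods while no refinement mechanism (Proposition~\ref{prop:cofiltmap}) is available, and then transfer to attractors by the duality argument from \cite{KMV-1a}. Where you genuinely diverge is in how the discrete lift is assembled. The paper never covers a continuous lift: it first proves the abstract Theorem~\ref{thm:lift}, whose entire point is \emph{collar robustness} --- the radii $\epsilon_i$ are fixed in advance so that \emph{any} compact sets $N_i$ with $B_{\epsilon_i/2}(R(\mu_i)) \subset N_i \subset B_{\epsilon_i}(R(\mu_i))$ assemble, via the recursion $U(\mu_{i+1}) = N_{i+1}\cup U(\pred{\mu_{i+1}})$ over a linear extension of $\sJ(\sR)$, into a lift into $\sRNbhd(X,f)$. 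The discrete lift is then built by setting $\cN_i = \cov_{\cX_n}(B_{\epsilon_i/2}(R(\mu_i)))$, which sits inside the collar once $\diam(\cX_n) < \epsilon_i/2$, so $\supp{\ell_n}$ is literally an instance of Theorem~\ref{thm:lift} and the homomorphism property comes for free from the recursion; no property of the cover beyond the sandwich condition \eqref{eqn:stable} ever needs checking. You instead take a fixed well-separated continuous lift $k$ and define $\ell_n = \cov_{\cX_n}\circ k$ globally, re-proving meet-preservation by the bridging argument (a grid element of small diameter cannot meet atoms $N_p$, $N_{p'}$ with $p~\Vert~p'$), which is indeed the mechanism of Proposition~\ref{prop:well-sep} and is sound. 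What the paper's route buys is that the delicate inductive bookkeeping (conditions {\bf C1}--{\bf C3}) is done once, at the continuous level, in a form stable under arbitrary collar perturbation; what yours buys is a one-line formula for $\ell_n$, at the cost of two extra verifications at the discrete level. One small quantitative point in your favor to fix: for well-separation of the \emph{discrete} atoms you need $\diam(\cX_n) < d_0/2$, not $d_0$, since two distinct grid elements each meeting one of two separated atoms can still intersect each other.

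Two caveats on the ingredients you import. First, the statement of \cite[Theorem 1.2]{KMV-1a} supplies only a lattice monomorphism $k\colon \sR\to \sRNbhd(X,f)$; well-separation --- and specifically disjointness of the \emph{closures} of incomparable atoms, which is what gives your uniform $d_0$ --- is a property of the construction, not of the cited statement. This is exactly what the paper's Theorem~\ref{thm:lift} packages (condition {\bf C3} is stated with closures), so you should obtain $k$ from it rather than from \cite{KMV-1a} directly; note also that your assertion that the atoms $N_p = k(\gamma)\setminus k(\beta)$ are regular closed is neither needed nor true in general --- disjoint closures for incomparable pairs is the property that matters. Second, for $\alpha(\supp{\ell_n(R)}) = R$ the right tool is the robustness of repelling neighborhoods under $d$-fattening (the dual of \cite[Proposition 3.21]{KMV-1a}, the same estimate used inside the proof of Proposition~\ref{prop:conv-latt-3}), applied with $\diam(\cX_n)$ below a threshold depending on the finitely many $k(R)$; Corollary~\ref{cor:resolution1} is not quite the relevant statement, since $k(R)$ is not a small ball around $R$. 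With these repairs your argument closes, and it yields the same conclusion as the paper's proof of Theorem~\ref{thm:conv-latt-2}-style induction specialized to the non-cofiltration setting.
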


The proof for Theorem~\ref{thm:conv-latt-3} is similar in spirit to that of Theorem~\ref{thm:conv-latt-2}.  However, because we are not assuming a cofiltration of grids we cannot make use of Proposition~\ref{prop:cofiltmap}.
We make use of the  following abstract result to circumvent this difficulty.

\begin{theorem}\label{thm:lift}
Let $f\colon X \to X$ be a continuous mapping on a compact metric space $X$.
Let $\sP$ be a poset with $I$ elements.
Let  $R\colon\sO(\sP)\to\sR\subset\sRep(X,f)$ be a lattice isomorphism, and let $\pi:\sP\to\{1,2,\ldots,I\}$ be a 
bijective, order-preserving map. 
Let $p_i :=\pi^{-1}(i)$ and $\mu_i :=\down p_i \in \sO(\sP)$ for $i=\setof{1,\ldots,I }$. 
Then there exist $\setof{\epsilon_i}_{i=1}^I$
with $\epsilon_i>0$, such that if $\setof{N_i}_{i=1}^I$
is a collection of compact sets satisfying 
\begin{equation}\label{eqn:stable}
B_{\epsilon_i/2}(R(\mu_i)) \subset N_{i} \subset B_{\epsilon_i}(R(\mu_i)),
\end{equation}
then each $N_i$ is a repelling neighborhood.
Furthermore, $U\colon\sO(\sP)\to\sRNbhd(X,f)$, determined by 
 $U(\mu_1)=N_1$ and $U(\mu_{i+1})=N_{i+1}\cup U(\pred{\mu_{i+1}})$, is a lift
 of $R$ through $\alpha \colon \sRNbhd(X,f) \to \sRep(X,f)$.
\end{theorem}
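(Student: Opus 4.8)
The plan is to imitate the inductive construction from the proof of Theorem~\ref{thm:conv-latt-2}, but to run it directly in the continuous lattice $\sRNbhd(X,f)$, where $\wedge=\cap$; this is exactly what lets us dispense with the grid and cofiltration machinery (in particular Proposition~\ref{prop:cofiltmap}). By Remark~\ref{rem:birkhoff} and the atom representation \eqref{eqn:inatom} it is enough to define $U$ on the join-irreducibles $\mu_i=\down p_i$ and the associated atoms $c_{p_i}:=U(\mu_i)\setminus U(\pred\mu_i)$; unwinding the recursion in the statement then gives $U(\alpha)=\bigcup_{p_i\in\alpha}c_{p_i}=\bigcup_{p_i\in\alpha}N_i$ for every $\alpha\in\sO(\sP)$. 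I would process the $p_i$ in the order prescribed by the linear extension $\pi$ and carry along two separation invariants, the continuous analogues of Conditions \textbf{C2} and \textbf{C3} of the proof of Theorem~\ref{thm:conv-latt-2}: first, $c_{p_i}\cap R(\alpha)=\varnothing$ whenever $p_i\notin\alpha$; and second, $c_{p_i}\cap c_{p_j}=\varnothing$ whenever $p_i\Vert p_j$.

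Two of the required properties are essentially immediate. Each $R(\mu_i)$ is a repeller, so the repeller stability results of \cite{KMV-1a} (the repelling counterparts of \cite[Proposition 3.21]{KMV-1a} already used for Proposition~\ref{prop:conv-latt-3}) furnish $\bar\epsilon_i>0$ such that every compact set $N$ with $B_{\bar\epsilon_i/2}(R(\mu_i))\subset N\subset B_{\bar\epsilon_i}(R(\mu_i))$ is a repelling neighborhood with $\alpha(N)=R(\mu_i)$; here the factor $\tfrac12$ in \eqref{eqn:stable} is what guarantees $R(\mu_i)\subset\Int N$ uniformly over all admissible $N$. Imposing $\epsilon_i\le\bar\epsilon_i$ makes every $N_i$, and hence every finite union $U(\alpha)=\bigcup_{p_i\in\alpha}N_i$, a repelling neighborhood. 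The lift property is then a one-line computation: since $\alpha\colon\sRNbhd(X,f)\to\sRep(X,f)$ is a lattice epimorphism with $\vee=\cup$ on both sides and $R$ is a homomorphism,
\[
\alpha(U(\alpha))=\bigcup_{p_i\in\alpha}\alpha(N_i)=\bigcup_{p_i\in\alpha}R(\mu_i)=R\Bigl(\bigvee_{p_i\in\alpha}\mu_i\Bigr)=R(\alpha).
\]

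The heart of the argument is the inductive choice of the $\epsilon_i$ securing the separation invariants, the one delicacy being that a single collection $\{\epsilon_i\}$ must serve every admissible family $\{N_i\}$. The algebraic input is that $R$ preserves meets ($\wedge=\cap$ in $\sRep(X,f)$): for $p_i\Vert p_j$ the down-set $\mu_i\cap\mu_j$ consists of elements strictly below both $p_i$ and $p_j$, whence
\[
R(\mu_i)\cap R(\mu_j)=R(\mu_i\cap\mu_j)=\bigcup_{p_l\in\mu_i\cap\mu_j}R(\mu_l)\subset G_j:=\bigcup_{p_l<p_j}B_{\epsilon_l/2}(R(\mu_l)),
\]
and $G_j\subset\Int U(\pred\mu_j)$ for every admissible choice since $N_l\supset B_{\epsilon_l/2}(R(\mu_l))$. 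Hence $R(\mu_j)\setminus G_j$ is disjoint from $R(\mu_i)$, so these compacta are separated by a positive distance determined by the repellers and by the $\epsilon_l$ with $l<j$ only. Processing the $p_j$ in $\pi$-order, when I fix $\epsilon_j$ I shrink it below all of these distances — a legitimate finite look-ahead over the partners $p_i\Vert p_j$, since the distances do not involve $\epsilon_j$ — thereby keeping $c_{p_j}\subset B_{\epsilon_j}(R(\mu_j))\setminus G_j$ a fixed positive distance from every such $R(\mu_i)$. When the later index $i$ is reached, choosing $\epsilon_i$ below that distance makes the thin set $c_{p_i}\subset B_{\epsilon_i}(R(\mu_i))$ disjoint from $c_{p_j}$, giving \textbf{C3}. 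The same estimate with $\mu_j$ replaced by a general $\alpha\not\ni p_i$, via $R(\mu_i)\cap R(\alpha)=R(\mu_i\cap\alpha)\subset\Int U(\pred\mu_i)$, yields \textbf{C2} for the new atom, and \textbf{C2} for the old atoms persists because $U(\pred\mu_i)$ is already fixed.

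Granting \textbf{C3}, $U$ is a lattice homomorphism. The join identity $U(\alpha\cup\alpha')=U(\alpha)\cup U(\alpha')$ is built into the definition, and for the meet one computes $U(\alpha)\cap U(\alpha')=\bigcup_{p\in\alpha,\,p'\in\alpha'}(c_p\cap c_{p'})$; a nonempty term forces $p,p'$ comparable by \textbf{C3}, and then the smaller of the two indices lies in the down-set $\alpha\cap\alpha'$, so the intersection equals $U(\alpha\cap\alpha')$. Because $\alpha\circ U=R$ is injective, $U$ is a monomorphism, and being a homomorphism its image is automatically a sublattice of $\sRNbhd(X,f)$. I expect the main obstacle to be precisely the uniformity demanded by the statement — the single collection $\{\epsilon_i\}$ must work for all admissible $\{N_i\}$ at once — which is why every separation above is phrased through the fixed envelopes $B_{\epsilon_l}(R(\mu_l))$ and the fixed open sets $G_j$ rather than through the variable sets $N_i$, and why the factor $\tfrac12$ of \eqref{eqn:stable} is indispensable for leaving room to absorb the common parts $R(\mu_i)\cap R(\mu_j)$ into the interior of the partial lift.
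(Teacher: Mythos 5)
Your proposal is correct, and its skeleton is the paper's: induction over the join-irreducibles $\mu_i$ in the order of the linear extension $\pi$, atoms $c_{p_i}$, separation conditions \textbf{C2}--\textbf{C3} maintained by compactness, and the repelling-neighborhood property of the $N_i$ obtained from dual-repeller stability (the paper invokes \cite[Proposition 3.25]{KMV-1a}, choosing $\epsilon_i$ with $B_{\epsilon_i}(R(\mu_i))\cap R(\mu_i)^*=\varnothing$; your ``repelling counterpart of Proposition 3.21'' is the same ingredient). The one genuine difference is how the separations are quantified, and here your version is actually tighter than the published argument. The paper's induction interleaves the choices: it fixes $\epsilon_{k+1}$ by measuring distances from the already-realized sets $\cl(V_{k,p})$, which depend on the particular $N_1,\dots,N_k$ chosen, so read literally it establishes the sequential version of the statement and leaves the uniformity over all admissible families $\setof{N_i}$ implicit. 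You measure every separation against fixed data only --- the envelopes $B_{\epsilon_l}(R(\mu_l))$ and the open sets $G_j=\bigcup_{p_l<p_j}B_{\epsilon_l/2}(R(\mu_l))$, which lie in $\Int U(\pred{\mu_j})$ for \emph{every} admissible choice precisely because of the factor $\tfrac12$ in \eqref{eqn:stable} --- so your single collection $\setof{\epsilon_i}$ serves all admissible $\setof{N_i}$ at once, matching the theorem's quantifier order exactly. Your direct verification of $\alpha\circ U=R$ via $\alpha(U(\alpha))=\bigcup_{p_i\in\alpha}\alpha(N_i)$ replaces the paper's partial-lift bookkeeping imported from Theorem~\ref{thm:conv-latt-2}, but it is the same computation, and your meet argument via \textbf{C3} is sound (plain disjointness of the atoms suffices since $\wedge=\cap$ in $\sRNbhd(X,f)$).

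One small repair is needed in the \textbf{C3} look-ahead. To conclude that the envelope $E_j:=B_{\epsilon_j}(R(\mu_j))\setminus G_j$ keeps a positive distance from $R(\mu_i)$, it does not suffice to take $\epsilon_j$ below $\dist\bigl(R(\mu_j)\setminus G_j,\,R(\mu_i)\bigr)$ as you state: a point $x\in E_j$ may be $\epsilon_j$-close only to points of $R(\mu_j)\cap G_j$, about which that distance says nothing. The distance you actually need is the mirror one: since $R(\mu_i)\cap R(\mu_j)=R(\mu_i\cap\mu_j)\subset G_j$, the compact set $R(\mu_i)\setminus G_j$ is disjoint from $R(\mu_j)$, and taking $\epsilon_j<\dist\bigl(R(\mu_i)\setminus G_j,\,R(\mu_j)\bigr)$ forces $\cl(E_j)\cap R(\mu_i)=\varnothing$, because any $x\in\cl(E_j)$ lies outside the open set $G_j$ while satisfying $\dist(x,R(\mu_j))\le\epsilon_j$. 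This quantity is positive for the same reason as yours and, like yours, depends only on the repellers and the previously fixed $\epsilon_l$ with $p_l<p_j$, so your finite look-ahead (and then $\epsilon_i<\dist(\cl(E_j),R(\mu_i))$ at the later stage $i$) goes through verbatim after swapping which compactum is trimmed by $G_j$. With that substitution, the proposal is complete.
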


\begin{proof}
We use an inductive argument to prove the existence of $\{\epsilon_i\}_{i=1}^I$. Simultaneously we prove that
at each stage of the induction argument the restriction of $U$  to $\sO(\mu_i^\top)$, which we denote by $U_i$, is a partial lift of $R$ through $\alpha \colon \sRNbhd(X,f) \to \sRep(X,f)$.  
More precisely, once $\epsilon_i$ is determined we choose a compact set $N_i\subset X$ satisfying \eqref{eqn:stable} at which point $U_i$ is well defined.

Given $U_i$ define
\[
V_{i,p} := U_i(\beta)\setminus U_i(\gamma)\quad \text{for }\beta\setminus\gamma = \setof{p}
\]
This definition is independent of the particular choice of $\beta$ and $\gamma$. See  \eqref{eqn:inatom}
and the associated discussion.
Observe that  
\[
 U_i(\alpha) = \bigcup_{p\in \alpha} V_{i,p}.
\]

Choose $\epsilon_1 >0$ such that $B_{\epsilon_1}(R(\mu_1)) \cap R(\mu_1)^* = \varnothing$.
Choose $N_1$ satisfying \eqref{eqn:stable}.
By \cite[Proposition 3.25]{KMV-1a} $N_1$ is a repelling neighborhood of $R(\mu_1)$. 
This defines $U_1$ on $\sO(\mu_1^\top)=\sO(\setof{p_1}^\top)$.
We leave it to the reader to check that  the following three conditions 
(cf.\ the proof of Theorem~\ref{thm:conv-latt-2}) are trivially satisfied:
\begin{description}
\item[C1] $V_{k,p}\cap V_{k,p'} = \varnothing$ for all $p\not = {p'}$;
\item[C2] $\cl(V_{k,p}) \cap R(\alpha) = \varnothing$ if $p \not\in\alpha$;
\item[C3] $\cl(V_{k,p}) \cap \cl(V_{k,p'}) = \varnothing$ for all $p~\Vert~{p'}$,
\end{description}
where $k=1$, $p,p'\in \setof{p_1},$ and $\alpha\in\sO(\sP)$.  Since 
\[
\alpha(N_1,f) = \alpha(U_1(\setof{p_1}),f) = R(\setof{p_1}),
\]
$U_1$ is a partial lift of $R$ through $\alpha(\cdot,f)$.

To carry out the induction argument, assume that $\{\epsilon_i\}_{i=1}^k$ and $\setof{N_i}_{i=1}^k$ have been chosen
such that \eqref{eqn:stable} is satisfied and that the resulting lattice homomorphism $U_k$ defined on
$\sO(\lambda_k^\top)$, where $\lambda_k := \setof{p_1, \cdots p_k}$, is a partial lift of $R$ through $\alpha(\cdot,f)$
satisfying conditions {\bf C1} - {\bf C3} for $p,p'\in\lambda_k$ and $\alpha\in\sO(\sP).$

Choose $\epsilon^0_{k+1}$ such that $B_{\epsilon^0_{k+1}}(R(\mu_{k+1}))\cap R(\mu_{k+1})^* =\varnothing$.
By \cite[Proposition 3.25]{KMV-1a} if $R({\mu_{k+1}})\subset \Int(D) \subset B_{\epsilon^0_{k+1}}(R(\mu_{k+1}))$, 
then $D$ is a repelling neighborhood for $R({\mu_{k+1}})$.

We claim that there exists  $\epsilon_{k+1}\in (0, \epsilon^0_{k+1})$ such that if 
$R({\mu_{k+1}})\subset \Int(D) \subset B_{\epsilon_{k+1}}(R(\mu_{k+1}))$, then $D$ satisfies 
the following two conditions:
\begin{enumerate}
\item [(i)] if $p\in \lambda_k\setminus {\mu_{k+1}}$, then $\cl(V_{k,p}) \cap D = \varnothing$, and
\item [(ii)] if 
$p_{k+1}\not\in\alpha\in\sO(\sP)$, then $\cl \left(D\setminus U_k(\lambda_k)\right) \cap R(\alpha) = \varnothing$.
\end{enumerate}
To establish (i) we use the induction hypothesis {\bf C2}, which implies that $\cl (V_{k,p}) \cap R(\mu_{k+1}) = \varnothing$ for all $p\in {\lambda_k}\setminus {\mu_{k+1}}$.  
Since $\cl (V_{k,p})$ and $R(\mu_{k+1})$ are compact, we can choose $\epsilon^1_{k+1}\in (0,\epsilon^0_{k+1})$ such that $\cl(V_{k,p}) \cap D = \varnothing$ for all neighborhoods $D$  such that $R({\mu_{k+1}})\subset \Int(D) \subset B_{\epsilon^1_{k+1}}(R(\mu_{k+1}))$.

To establish (ii) note  that the inclusion
\begin{equation}
\label{eqn:alt2}
D \cap R(\alpha) \subset \Int U_k(\lambda_k)
\end{equation}
 is equivalent to 
\begin{equation}
\label{eqn:alternative}
(D\setminus \Int U_k(\lambda_k)) \cap R(\alpha) = \varnothing.
\end{equation}
Observe that (\ref{eqn:alternative}) implies (ii) and thus it is sufficient to verify  (\ref{eqn:alt2}) under the assumption that  $p_{k+1}\not\in\alpha$.
Since, $R(\mu_{k+1})\cap R(\alpha) = R({\mu_{k+1}\cap \alpha}) \subset R(\lambda_k)\subset \Int U_k(\lambda_k)$, we can choose $\epsilon_{k+1}\in (0, \epsilon^1_{k+1})$ such that 
$R({\mu_{k+1}})\subset \Int(D) \subset B_{\epsilon_{k+1}}(R(\mu_{k+1}))$ satisfies (\ref{eqn:alt2}).

For the above choice of $\epsilon_{k+1}$ choose $N_{k+1}$ satisfying \eqref{eqn:stable}.  This defines $U_{k+1}$.
The proof that $U_{k+1}$ is a partial lift is identical in form to that of the proof of Theorem~\ref{thm:conv-latt-2} and thus
left to the reader.

The induction argument terminates after $I$ steps.
\end{proof}

\begin{proof}[Proof of Theorem~\ref{thm:conv-latt-3}]
We first prove the result in the context of repellers, i.e.\ we prove the existence of a lift $\ell_n\colon  \sR \to \sRSet(\cX_n,\cF_n)$ of the inclusion map $\iota\colon \sR \rightarrowtail \sRep(X,f)$ through $\alpha(\supp{\cdot})\colon \sRSet(\cX_n,\cF_n) \to \sRep(X,f)$.

Let $\setof{\epsilon_i}_{i=1}^I$,  be the set of radii produced by applying Theorem~\ref{thm:lift} with $\sP = \sJ(\sR)$. 
Choose $n$ sufficiently large so that  $\diam(\cX_{n})<\min_{i}\{\epsilon_i/2\}$.
For $i = 1,\ldots, I$ define $\cN_i=\cov_{\cX_n}(B_{\epsilon_i/2}(R(\mu_i)))$.  
By Proposition~\ref{prop:conv-latt-3}, if $n$ is chosen sufficiently large, then each $\cN_i\in\sRSet(\cX_n,\cF_n)$.
Similarly to Theorem~\ref{thm:lift}, the map $\ell_n\colon\sO(\sP)\to\sRSet(\cX_n,\cF_n)$ given by
$$
\ell_n(\mu_1)=\cN_1\;\;\hbox{and}\;\; \ell_n(\mu_{i+1})=\cN_{i+1}\cup \ell_n(\pred{\mu_{i+1}})
$$
is a lift.
By Theorem~\ref{thm:lift}  $|\ell_n|\colon \sO(\sP)\to \sRNbhd(X,f)$  is a lift. 

The statement for attractors follows from duality, i.e.\ by the proof of \cite[Theorem 1.2]{KMV-1a} and in particular
\cite[commutative diagram (24)]{KMV-1a}.
\end{proof}

\begin{remark}\label{rmk:mono}
To put Theorems~\ref{thm:conv-latt-2} and~\ref{thm:conv-latt-3} into perspective, we recall that 
the monotonicity of images of $\cF_n$ required for a cofiltration of mappings may in some applications
be computationally expensive to attain, even though most practical algorithms construct
$\cF_n$ on a cofiltration of grids $\cX_n$ through successive refinement.
Theorem~\ref{thm:conv-latt-2} implies that if one does indeed compute a cofiltration of mappings,
then the structure of attractors of $f$ can be realized in forward invariant sets of $\cF$.
Without a cofiltration, Theorem~\ref{thm:conv-latt-3} still implies the weaker result that the
 structure of attractors of $f$ can be realized in attracting sets of $\cF$.
\end{remark}

\begin{remark}
\label{rem:comm-lift11}
By Proposition \ref{prop:discr-att-to-varphi3} and \ref{prop:discr-att-to-varphi4} we can restate the Diagram (\ref{diag:comm-liftfull}) by Diagram
\eqref{diag:AR2a} for $\varphi$. For
 attractors this reads:

\begin{equation*}
\begin{diagram}
\node{\IS^+(\cX,\cF)}\arrow{se,l,A}{\bomega}\arrow{e,l,V}{\imath}\node{\sASet(\cX,\cF)}\arrow{e,l,V}{|\cdot|}\arrow{s,l,A}{\bomega}\node{\sANbhdR(X,\varphi)}\arrow{e,l,A}{\omega}\node{\sAtt(X,\varphi)}\\
\node{}\node{\sAtt(\cX,\cF)}\arrow{ene,r}{\omega(|\cdot|)}
\end{diagram}
\end{equation*}
By Corollary \ref{for:sameA} we have that $\sAtt(X,f) = \sAtt(X,\varphi)$ as lattices, and therefore
$\ell$ also provides a lift for $\varphi$ in Theorem~\ref{thm:conv-latt-1} and Theorem~\ref{thm:conv-latt-2}.
In the case of Theorem~\ref{thm:conv-latt-1}, when we construct lifts under refinements, then
the lift $\ell$ yields opposite arrows for all arrows in Diagram (\ref{diag:comm-lift11}).
In the case of Theorem~\ref{thm:conv-latt-2} we only provide opposite arrows to $\sASet(\cX,\cF)$, which implies that
lifts to $\IS^+(\cX,\cF)$ may not exist.  The same reasoning holds for repellers and their lifts.
\end{remark}

\bibliographystyle{plain}
\bibliography{KMV1b-biblist}


\end{sloppypar}
\end{document}